\documentclass[11pt]{amsart}

\usepackage[marginratio=1:1,height=660pt,width=480pt,tmargin=70pt]{geometry}

\usepackage{amssymb,latexsym,amsmath,amsthm,amscd}
\usepackage{mathrsfs}   
\usepackage{fancyhdr}
 \usepackage{ifthen} 
 \usepackage{xstring}

\pagestyle{fancy} 
\fancyhf{}

\usepackage{etoolbox}

\newcommand{\im}{\lrcorner\,}

\appto\appendix{\addtocontents{toc}{\protect\setcounter{tocdepth}{1}}}
\appto\listoffigures{\addtocontents{lof}{\protect\setcounter{tocdepth}{1}}}
\appto\listoftables{\addtocontents{lot}{\protect\setcounter{tocdepth}{1}}}
\theoremstyle{plain}
\newtheorem{theorem}{Theorem}[section]
\newtheorem{corollary}[theorem]{Corollary}

\newtheorem{lemma}[theorem]{Lemma}
\newtheorem{proposition}[theorem]{Proposition}
\newtheorem{definition}[theorem]{Definition}

\theoremstyle{remark}

\newtheorem{remark}[theorem]{Remark}

\newtheorem*{example*}{Example}

\numberwithin{equation}{section}

\usepackage{enumitem,array,rotating}  
\usepackage{color}
\usepackage{xcolor}
\definecolor{carmine}{rgb}{0.59, 0.0, 0.09}
\definecolor{mediumpersianblue}{rgb}{0.0, 0.4, 0.65}
\definecolor{persianplum}{rgb}{0.44, 0.11, 0.11}
\usepackage[colorlinks=true,
            linkcolor=persianplum, 
            urlcolor=olive,
            citecolor=mediumpersianblue, 
            backref=page]{hyperref}
\usepackage{amsfonts}
  
\urlstyle{same}


\newcommand{\sN}{\mathsf{N}}
\newcommand{\sQ}{\mathsf{Q}}
\newcommand{\sP}{\mathsf{P}}

\newcommand{\cA}{\mathcal{A}}
\newcommand{\cB}{\mathcal{B}}
\newcommand{\cC}{\mathcal{C}}

\newcommand{\cG}{\mathcal{G}}
\newcommand{\cH}{\mathcal{H}}
\newcommand{\cL}{\mathcal{L}}

\newcommand{\cS}{\mathcal{S}}
\newcommand{\cP}{\mathcal{P}}
\newcommand{\cF}{\mathcal{F}}
\newcommand{\cJ}{\mathcal{J}}

\newcommand{\cT}{\mathcal{T}^{[g]}}
\newcommand{\cTT}{\mathcal{T}^{[\nabla]}}
\newcommand{\cTTg}{\mathcal{T}^{[\nabla^{[g]}]}}
\newcommand{\cQ}{\mathcal{Q}}

\newcommand{\cI}{\mathcal{I}}
\newcommand{\cN}{\mathcal{N}}
\newcommand{\cK}{\mathcal{K}}
\newcommand{\bcL}{\overline{\mathcal{L}}}


\newcommand{\bvarphi}{\Psi}
\newcommand{\bmu}{\Gamma}
\newcommand{\beeta}{\Lambda}
\newcommand{\btau}{\Omega}


\newcommand{\scV}{\mathscr{V}}
\newcommand{\scU}{\mathscr{U}}
\newcommand{\scD}{\mathscr{D}}
\newcommand{\scE}{\mathscr{E}}
\newcommand{\scF}{\mathscr{F}}
\newcommand{\scK}{\mathscr{K}}

\newcommand{\scC}{\mathscr{C}}
\newcommand{\scH}{\mathscr{H}}
\newcommand{\scT}{\mathscr{T}}

\newcommand{\DD}{\mathbb{D}}


\newcommand{\fso}{\mathfrak{so}}

\newcommand{\fg}{\mathfrak{g}}
\newcommand{\fp}{\mathfrak{p}}

\newcommand{\ri}{\mathrm{i}}


\newcommand{\RR}{\mathbb{R}}
\newcommand{\KK}{\mathbb{K}}

\newcommand{\CC}{\mathbb{C}}

\newcommand{\PP}{\mathbb{P}}

\newcommand{\FF}{\mathbb{F}}
\newcommand{\SSS}{\mathbb{S}}

\newcommand{\RT}{\mathcal{R}^{\uparrow}T^*M}


\newcommand{\bb}{\mathbf{b}}
\newcommand{\bc}{\mathbf{c}}

\newcommand{\bg}{\mathbf{g}}

\newcommand{\bq}{\mathbf{q}}

\newcommand{\calpha}{\overline{\alpha}}
\newcommand{\cbeta}{\overline{\beta}}
\newcommand{\cphi}{\overline{\phi}}
\newcommand{\csigma}{\overline{\sigma}}
\newcommand{\cpi}{\overline{\pi}}
\newcommand{\comega}{\overline{\omega}}
\newcommand{\ctheta}{\overline{\theta}}
\newcommand{\cxi}{\overline{\xi}}
\newcommand{\czeta}{\overline{\zeta}}
\newcommand{\cscH}{\overline{\mathscr{H}}}
\newcommand{\cscK}{\overline{\mathscr{K}}}

\newcommand{\w}{{\,{\wedge}\;}}

\newcommand{\exd}{\mathrm{d}}

\newcommand{\ve}{\varepsilon}

\newcommand{\Id}{\operatorname{Id}}

\newcommand{\half}{\textstyle{\frac 12}}
 
\newcommand{\Ker}{\mathrm{Ker}}

\newcommand{\biw}{\bigwedge\nolimits}
\newcommand{\what}{\widehat}

\makeatletter
\renewcommand*{\p@section}{\S\,}
\renewcommand*{\p@subsection}{\S\,}
\renewcommand*{\p@subsubsection}{\S\,}

 
\makeatother
\usepackage{float}



\fancyhead[CE]{ Makhmali}
\fancyhead[CO]{Weyl metrizability of 3-dimensional projective structures and CR submanifolds}
\fancyhead[RO,RE]{\thepage }

 \begin{document}

\author{Omid Makhmali}

\address{\newline    Omid Makhmali\\\newline
   Departamento de Geometr\'ia y Topolog\'ia and IMAG, Universidad de Granada, Granada 18071, Spain \\\newline
    \textit{Email address: }{\href{mailto:omakhmali@ugr.es}{\texttt{omakhmali@ugr.es}}}\\\newline
Department of Mathematics and Statistics, UiT The Arctic University of Norway, Troms\o\  90-37,Norway \\\newline
\textit{Email address: }{\href{mailto:omid.makhmali@uit.no}{\texttt{omid.makhmali@uit.no}}}\\\newline
Department of Mathematics and Natural Sciences, Cardinal Stefan Wyszy\'nski University, ul. Dewajtis 5,  Warszawa, 01-815, Poland \\\newline
\textit{Email address: }{\href{mailto:o.makhmali@uksw.edu.pl}{\texttt{o.makhmali@uksw.edu.pl}}}
}

\title[]
{Weyl metrizability of 3-dimensional projective structures and CR submanifolds} 
\date{\today}

\begin{abstract}
  A projective structure is Weyl metrizable if it has a representative that preserves a conformal structure. We interpret Weyl metrizability of
  3-dimensional projective structures as certain 5-dimensional nondegenerate CR submanifolds in a class of 7-dimensional 2-nondegenerate CR structures. As a corollary, it follows  that in dimension three Beltrami's theorem extends to conformal structures, i.e.  a locally flat projective structure is Weyl metrizable exclusively with respect to a locally flat conformal structure. In higher dimensions it is  shown that conformal Beltrami theorem remains true as well.

\end{abstract}

\subjclass{Primary: 53A20, 53C18, 32V05, 53C28, 53A55, 32V40, 53C05; Secondary: 53C24}
\keywords{projective geometry, conformal geometry, CR geometry, Weyl structures, metrizability, Fefferman construction, correspondence space}

\maketitle
  
\vspace{-.5 cm}

\setcounter{tocdepth}{1}   
\tableofcontents

\section{Introduction}
\label{sec:introduction}
A projective structure on a manifold $M,$ denoted as $[\nabla],$ is defined as the equivalence class of  torsion-free linear connections whose geodesics coincide as unparametrized curves. The metrizability problem for $[\nabla]$, which goes back to R. Liouville \cite{Roger}, asks whether  $[\nabla]$ admits a representative $\nabla\in[\nabla]$ that is (locally) the Levi-Civita connection of some (pseudo-)Riemannian metric on $M.$   A slightly weaker question,  referred to as  the (local) Weyl metrizability problem, is whether   (locally) there is a  (pseudo-)Riemannian conformal structure $[g]$ on $M$ that is preserved by a representative $\nabla\in[\nabla],$ i.e. $\nabla_Xg\in [g]$ for some, and therefore all,  $g\in [g]$ along any vector field $X$ on $M.$

The metrizability problem on surfaces has been intensively studied and a derivation of necessary and sufficient conditions for a  2-dimensional projective structure to be metrizable can be found in \cite{BDE,EM}.  Metrizability of  3-dimensional projective structures has been studied in \cite{DE-3D,E-3D} and some necessary conditions were derived. For more recent developments on the metrizability problem we refer to  \cite{Flood}. The Weyl metrizability problem of 2-dimensional projective structures  was shown in \cite{Mettler-1} to be locally unobstructed. 

To highlight some local aspects of these questions, we first recall from \cite{EM} that generic projective structures are not (locally) metrizable. It will be instructive to explain this fact by counting the local generality of projective structures and (pseudo-)Riemannian metrics. Recall that on an $n$-dimensional manifold $M$, a torsion-fee linear connection $\nabla$ is locally defined by its  Christoffel symbols, $\Gamma^i_{jk}=\Gamma^i_{kj},$ which  are $\half{n^2(n+1)}$ functions of $n$-variables. However, as we will see in \eqref{eq:proj-equiv}, the projective class $[\nabla]$ is determined by the torsion-free part of $\Gamma^i_{jk}$'s. Thus,   up to diffeomorphisms,  projective structures locally depend on   $a_1=\half{n^2(n+1)}-2n$ functions of $n$ variables.   The metrizability problem can be expressed as  an overdetermined first order  PDE system $\Gamma^i_{jk}=\Gamma^i_{jk}(g)$ for a (pseudo-)Riemannian metric $g$ as its solution.  Note that, up to diffeomorphisms, (pseudo-)Riemannian metrics on $M$ are determined by  $a_2=\half{n(n+1)}-n=\half{n(n-1)}$ functions of $n$ variables. Thus, naively speaking, one expects to find at least $a_1-a_2=\half{n(n^2-3)}$ obstructions for the (local) metrizability of a generic projective structure on $M,$ which explains the non-metrizability of generic projective structures.

Regarding the question of Weyl metrizability, we first find the local generality of Weyl structures by noting that from the definition above $[\nabla]$ is Weyl metrizable if there is a connection $\nabla\in[\nabla],$ a 1-form  $\beta,$ and a metric $g$ such that $\nabla g=2\beta\otimes g.$ It is immediate to check that  $(\beta,g)$ and $(\beta+\exd\lambda,e^{2\lambda}g)$ encode Weyl metrizability of $\nabla$ with respect to the same conformal structure.   Thus, Weyl metrizable projective structures are encoded by a 1-form $\beta$ and a conformal structure $[g],$ i.e. their local generality, up to diffeomorphisms,  is $a_3=n+\half{n(n-1)}-1$ functions of $n$ variables.   As a result, naively speaking, one expects to find at least $a_1-a_3=\half{(n-2)(n^2+2n-1)}$ obstructions for the (local) Weyl metrizability of a generic projective structure on $M.$

The naive count above suggests that all projective structures on surfaces are locally Weyl metrizable, which was shown in \cite{Mettler-1} by exploiting the twistor bundle of projective structures on surfaces, as a 2-disk bundle with a tautologically induced complex structure. The result follows from the  one-to-one correspondence between  (local) holomorphic sections of the twistor bundle of $[\nabla]$  and (local) Weyl structures for $[\nabla]$.

In dimension three our naive count suggests the existence of local obstructions for Weyl metrizability. In this article the (local) existence of a Weyl structure for a  3-dimensional projective structure is shown to be equivalent to the (local) existence of certain 5-dimensional Levi nondegenerate CR submanifolds in a special class of 7-dimensional Levi 2-nondegenerate CR manifolds. The Levi nondegenerate CR submanifolds are obtained from the so-called  twistor CR manifolds which are the twistor bundle of 3-dimensional conformal  structures and have been studied in the twistor programme \cite{LeBrun-CR,MasonPhD}. The class of Levi 2-nondegenerate CR manifolds are the twistor bundle of 3-dimensional projective structures and have been studied recently \cite{Gregorovic-CR,KM-subconformal}. 
 
As a corollary, we see that in dimension three flat projective structures are exclusively Weyl metrizable with respect to flat conformal structures. We will show that this fact remains true in higher dimensions as well. Lastly, we give a CR characterization of the Einstein-Weyl condition as the descent of the CR complex structure to the 4-dimensional space of  (time-like) geodesics of the Weyl structure.

\subsection{Outline of the article and main results}
\label{sec:outline-article-main}
Throughout this article the Cartan geometric language will be employed to significantly simplify the constructions involved. We start  \ref{sec:from-3d-conformal} with a brief definition of a Cartan geometry and the notion of Weyl structures for a special class of Cartan geometries referred to as  parabolic geometries. Throughout the article, when dealing with Weyl structures for CR, projective, and conformal structures we will sometimes use the terms CR, projective, conformal connections, respectively. As a result, the Weyl structure that was described previously will also be called a conformal connection for some $[g].$  The rest of \ref{sec:from-3d-conformal} briefly recalls the treatment of 3-dimensional conformal structures of Riemannian and Lorentzian signature, denoted by $[g],$ as certain parabolic geometries and reviews the construction of their twistor bundle, also known as  twistor CR manifolds, denoted by $\cT$. The main observation in this section is Proposition \ref{prop:algebr-type-CR-twist} where it is shown that the holomorphic/fundamental binary quartic of twistor CR manifolds, expressed in \eqref{eq:W-CR-quartic},  is either zero or has a repeated root of multiplicity four. In Remark \ref{rmk:petrov-type-twistor} we comment on another instance of such results in the context of conformal Fefferman metrics for 3-dimensional CR structures.

 The Cartan geometric treatment of a 3-dimensional (oriented) projective structure, $[\nabla],$ and the construction of its twistor bundle, denoted by $\cTT,$ is briefly reviewed in \ref{sec:from-3d-projective}.  In particular, we show that the 7-dimensional twistor bundle, $\cTT,$ carries a  Levi 2-nondegenerate CR structure of hypersurface type which is canonically determined by the underlying oriented 3-dimensional projective structure. See Definition \ref{def:cr-para-cr} for the notion of 2-nondegeneracy.

 The CR characterization of Weyl metrizability is carried out in \ref{sec:weyl-metr-as}, leading to the following theorem.
 \theoremstyle{plain}
\newtheorem*{thmA}{\bf Theorem \ref{thm:characterization-weyl-metr}}
\begin{thmA}
    Given an oriented projective structure $[\nabla]$ on a 3-manifold $M,$   there is a one-to-one correspondence between conformal structures $[g]$ with respect to which $[\nabla]$ is Weyl metrizable and 5-dimensional Levi nondegenerate CR submanifolds of hypersurface type that are transverse to the Levi kernel of the 7-dimensional twistor bundle, $\cTT,$  whose fundamental binary quartic is either zero or has a repeated root of multiplicity  four   and satisfy the non-integrability condition \eqref{eq:nondege-conformal} for $\ve= 1$ or $-1$ with respect to the canonical CR connection described in Proposition \ref{prop:Weyl-structre-cr-submanifolds}.  
  \end{thmA}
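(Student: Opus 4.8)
The strategy is to set up a precise dictionary between the two sides and then verify compatibility of the structure equations on the twistor bundle. Start on the projective side: fix the Cartan connection $\omega$ of $[\nabla]$ on the total space of its Cartan bundle $\cG\to M$ with model $\fsl(4,\RR)/\fp$, and realize $\cTT$ as the correspondence space $\cG/\cQ$ for the appropriate parabolic $\cQ$, equipped with its tautological Levi 2-nondegenerate CR structure of hypersurface type. The 7-dimensional $\cTT$ fibers over $M$ with 4-dimensional fiber (the twistor lines), and the Levi kernel is a line field $\cK\subset H\cTT$ tangent to this fibration in a controlled way. A Weyl structure for $[\nabla]$, i.e.\ a pair $(\beta,[g])$ with $\nabla g = 2\beta\otimes g$, is by the Cartan-geometric formalism of \ref{sec:from-3d-conformal} the same as a reduction/section data selecting a conformal connection; the plan is to show that such data is equivalent to a choice of 5-dimensional submanifold $N\subset\cTT$ meeting each twistor line in a 2-dimensional section transverse to $\cK$.

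**Key steps in order.** First I would produce, from a Weyl structure, the CR submanifold: given $(\beta,[g])$, the conformal structure $[g]$ on $M$ has its own twistor CR manifold $\cT = \mathcal{T}^{[g]}$ (a 5-manifold, Levi nondegenerate, hypersurface type) reviewed in \ref{sec:from-3d-conformal}, and the fact that $\nabla\in[\nabla]$ preserves $[g]$ should yield a canonical CR embedding $\iota\colon \cT \hookrightarrow \cTT$ covering $\mathrm{id}_M$ — essentially because a null line for $[g]$ is in particular a direction, hence a point of the projective twistor fiber, and the Weyl connection identifies the two Cartan-bundle reductions. Second, I would check that $\iota(\cT)$ is transverse to the Levi kernel $\cK$: this is where the Levi \emph{non}degeneracy of $\cT$ versus the 2-nondegeneracy of $\cTT$ enters, since a submanifold tangent to $\cK$ somewhere would inherit a degenerate Levi form. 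Third, compute the fundamental binary quartic of $\iota(\cT)$ as a CR submanifold and show it is the pullback of the holomorphic binary quartic of $\cT$; by Proposition \ref{prop:algebr-type-CR-twist} that quartic is zero or has a quadruple root, which is exactly the stated condition. Fourth, compute the second fundamental form / the relative invariant governing transversality and normalization with respect to the canonical CR connection of Proposition \ref{prop:Weyl-structre-cr-submanifolds}, and identify the constraint \eqref{eq:nondege-conformal}: the sign $\ve=\pm1$ should record the Riemannian versus Lorentzian signature of $[g]$ (equivalently, whether the relevant geodesics are space-like or time-like). Conversely, starting from a submanifold $N$ with all the listed properties, I would reconstruct $[g]$ by taking the family of null directions: each point of $M$ has a 2-parameter family of directions cut out by $N$ inside the 4-dimensional twistor fiber, and the quadruple-root (or vanishing) condition on the quartic guarantees this family is the cone of null directions of an honest conformal structure of the signature dictated by $\ve$; the transversality to $\cK$ plus \eqref{eq:nondege-conformal} guarantees that the associated CR connection descends to a genuine Weyl connection $\nabla\in[\nabla]$ preserving that $[g]$. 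Finally I would check the two constructions are mutually inverse, which is a formal consequence of both being determined by the same Cartan-connection data on $\cG$.

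**The main obstacle.** The technical heart — and the step I expect to be hardest — is the invariant-theoretic computation identifying the fundamental binary quartic and the relative invariant \eqref{eq:nondege-conformal} of the CR submanifold $N$ in terms of the projective and conformal Cartan curvatures. One must pull back the structure equations of $\cTT$ to $N$, split the adapted coframe into tangential and normal parts, and read off which curvature components obstruct (i) transversality to $\cK$, (ii) the quartic having a non-repeated root, and (iii) the $\ve=\pm1$ normalization; keeping the normalizations of the canonical CR connection of Proposition \ref{prop:Weyl-structre-cr-submanifolds} consistent with Mettler's twistor picture in dimension two, and ensuring no hidden integrability condition is lost when passing between $\cT$ and its image in $\cTT$, is where the bookkeeping is delicate. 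A secondary difficulty is the converse direction: showing that an \emph{abstract} submanifold satisfying the quartic and $\ve$ conditions necessarily arises from a conformal structure, i.e.\ that the 2-parameter family of directions it cuts out integrates to a genuine null cone field rather than merely an infinitesimal one — this should follow from the quadruple-root normal form together with the CR integrability of $\cTT$, but it requires care to rule out spurious solutions.
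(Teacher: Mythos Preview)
Your forward direction is essentially correct and matches the paper's Proposition \ref{prop:necessary-cond-weyl-metr}: the Weyl connection $\nabla^{[g]}$ yields an inclusion of principal bundles $\iota\colon\cC_0\to\cP$, hence an embedding $\cT\hookrightarrow\cTT$ transverse to the Levi kernel, and the quartic condition follows directly from Proposition \ref{prop:algebr-type-CR-twist}. (Minor correction: $\cT$ is the bundle of rays of \emph{positive} or \emph{future time-like} 1-forms, not of null lines; the embedding arises from the principal-bundle inclusion \eqref{eq:inclusion-Weyl-iota}, not from ``a null line for $[g]$ is in particular a direction''.)

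The gap is in the converse. Your plan to recover $[g]$ by ``taking the 2-parameter family of directions cut out by $N$ inside the 4-dimensional twistor fiber'' misreads the fibration: the relevant CR submanifold is a \emph{section} of $\cTT\to\PP_+T^*M$ over an open subset, so over each $x\in M$ it already covers the full fiber $\cT_x$ of (time-like) rays, not a proper subfamily. The conformal structure is encoded not in which rays lie in $N$, but in the choice of compatible almost complex structure on $\scF$ at each ray---i.e., in the section data itself. There is no direct geometric extraction of a null cone from $N$ in the way you describe.

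The paper's actual converse is a structure-equation computation with no evident geometric shortcut. Starting from the holomorphic section and its canonical CR connection (Proposition \ref{prop:Weyl-structre-cr-submanifolds}), one has the quartic coefficients \eqref{eq:C_is-hol-sec}. Imposing $C_1=C_2=C_3=0$ gives $\Im Y_2=Y_1=0$ and a formula for $\Im X_1$; the \emph{differential consequences} of these (via $\exd^2=0$ and Bianchi identities) then force $a_0=Y_0=0$ together with a long list of algebraic relations among the projective Weyl components $W^i_{jkl}$, and crucially force $\Re Y_2$ to be constant up to the structure-group scaling. Normalizing $\Re Y_2=-\ve$ gives a final reduction $\cP_4\hookrightarrow\cP$ on which the pulled-back projective Cartan connection literally takes the form \eqref{eq:Weyl-conformal-projective} of a conformal connection, and the metric is read off as $g=(\tau^0)^2+\ve(\tau^1)^2+\ve(\tau^2)^2$. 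The difficulty you flagged---ruling out spurious solutions---is handled precisely by this cascade of differential consequences, which your proposal does not engage with; the quadruple-root condition alone, without prolonging, does not give you the conformal structure.
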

  In the theorem above  $\ve$ determines whether $[g]$ is Lorentzian or Riemannian. In Corollary \ref{cor:CR-type-N} we study all Levi nondegenerate CR submanifolds of $\cTT$ that are transverse to its Levi kernel whose holomorphic binary quartic is either zero or has a repeated root of multiplicity four.

  In \ref{sec:two-corollaries} we highlight two corollaries arising from \ref{sec:weyl-metr-as}.  First corollary shows projectively flat Weyl structures are rigid in dimension three, which we extend to higher dimensions in the following theorem. 
 \theoremstyle{plain}
\newtheorem*{thmB}{\bf Theorem \ref{thm:conf-beltrami}}
\begin{thmB}
  A  conformal structure in dimension $\geq 3$ locally has a projectively flat Weyl structure if and only if it is locally conformally flat. 
\end{thmB}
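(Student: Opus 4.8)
The implication that a flat conformal structure carries a projectively flat Weyl structure is immediate: when $[g]$ is flat it contains, locally, a flat metric $g$ (this is the meaning of flatness of the underlying parabolic geometry), whose Levi-Civita connection is flat, hence a fortiori projectively flat, and is a Weyl connection for $[g]$ with vanishing Weyl $1$-form. For the converse, let $\nabla$ be a Weyl connection for $[g]$ on an $n$-manifold with $n\geq 3$, say $\nabla g = 2\,\beta\otimes g$ for some $g\in[g]$ and a $1$-form $\beta$, and assume that the induced projective structure $[\nabla]$ is flat. In dimension $\geq 3$, projective flatness of $[\nabla]$ is equivalent to the vanishing of the projective Weyl curvature of $\nabla$, which states that, for the projective Schouten tensor $P^{\mathrm{proj}}_{jk}$ of $\nabla$ and a $2$-form $Q_{kl}$ proportional to its skew part,
\begin{equation}\label{eq:conf-belt-proj}
 R^{\nabla}{}^{i}{}_{jkl}=\delta^{i}_{k}\,P^{\mathrm{proj}}_{lj}-\delta^{i}_{l}\,P^{\mathrm{proj}}_{kj}+\delta^{i}_{j}\,Q_{kl}.
\end{equation}

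The plan is to read the conformal curvature of $[g]$ off \eqref{eq:conf-belt-proj}. Recall that the conformal Weyl tensor $W^{[g]}$ of $[g]$ coincides with the totally trace-free part, taken with respect to $g$, of the curvature of \emph{any} Weyl connection for $[g]$; when one passes from the Levi-Civita connection of $g$ to $\nabla$ the non-metricity $\nabla g = 2\,\beta\otimes g$ must be accounted for, but the Ricci identity shows that it alters $R^{\nabla}_{ijkl}$ (first index lowered with $g$) only by terms proportional to $g_{ij}(\exd\beta)_{kl}$, which sit in a non-Weyl summand. By \eqref{eq:conf-belt-proj}, once the first index is lowered with $g$, $R^{\nabla}$ is a combination of terms of the forms $g_{ik}P^{\mathrm{proj}}_{lj}$, $g_{il}P^{\mathrm{proj}}_{kj}$ and $g_{ij}Q_{kl}$, so it lies in the sum of the non-Weyl irreducible summands of the space of curvature tensors; hence its totally trace-free part, namely $W^{[g]}$, vanishes. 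For $n\geq 4$ this already forces $[g]$ to be conformally flat and the theorem follows. For $n=3$ the Weyl tensor vanishes identically, so one must instead prove that the conformal Cotton tensor of $[g]$ is zero; in that dimension this is precisely the dimension-three rigidity corollary of \ref{sec:two-corollaries}, obtained there through the CR description. Alternatively one may run the same argument one order higher: flatness of $[\nabla]$ also forces the projective Cotton tensor of $\nabla$ to vanish, and substituting \eqref{eq:conf-belt-proj} into the relation between the projective and conformal Cotton tensors of a Weyl connection --- once the $\exd\beta$-contributions are checked to cancel --- yields the vanishing of the conformal Cotton tensor of $[g]$.

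The step I expect to require the most care is the identification, for the \emph{non-metric} connection $\nabla$, of the totally trace-free part of $R^{\nabla}$ with $W^{[g]}$: one has to control the non-metricity $\nabla g = 2\,\beta\otimes g$ together with the Faraday term $\exd\beta$ and check that neither leaks into the trace-free component, and in the three-dimensional case to carry out the corresponding $\exd\beta$-cancellation at the level of the Cotton tensor (or else to appeal to the CR-theoretic rigidity corollary of \ref{sec:two-corollaries}).
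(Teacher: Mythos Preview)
Your strategy for $n\geq 4$ is correct and genuinely different from the paper's. The paper works with Cartan's structure equations \eqref{eq:conformal-str-dim-n}--\eqref{eq:projective-str-dim-n}, equates the two curvature expressions to obtain \eqref{eq:C-W-Q-P}, and then contracts to show first that $W^i_{jkl}=0$ forces $\sP_{ij}=c\,\ve_{ij}$ (so the Weyl connection is the Levi--Civita connection of an Einstein representative), and only then concludes $C^i_{jkl}=0$. Your argument bypasses this intermediate Einstein step: you observe that projective flatness expresses $R^\nabla_{ijkl}$ as a sum of terms of the form $g\otimes(\text{rank-2 tensor})$, and that by Schur's lemma no such tensor has a component in the $O(n)$-irreducible Weyl module, hence $W^{[g]}=0$. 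This is cleaner, but it does not recover the sharper conclusion recorded in Remark~\ref{rmk:equivalent-description}, namely that a projectively flat Weyl connection is locally the Levi--Civita connection of a constant-sectional-curvature metric. For $n=3$ both you and the paper fall back on Corollary~\ref{cor:conf-rigid-flat}; the Cotton-tensor alternative you sketch is also carried out explicitly in the second half of that corollary's proof.

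One point to tighten: your sentence ``the Ricci identity shows that it alters $R^{\nabla}_{ijkl}$ \dots\ only by terms proportional to $g_{ij}(\exd\beta)_{kl}$'' is not literally correct. The Ricci identity applied to $g$ indeed pins down the \emph{symmetric} part $R^\nabla_{(ij)kl}$ as a multiple of $g_{ij}(\exd\beta)_{kl}$, but the difference $R^\nabla-R^{\nabla^g}$ also has a nonzero skew-in-$ij$ piece built from Kulkarni--Nomizu products of $g$ with $\nabla\beta$ and $\beta\otimes\beta$. Fortunately these extra terms are again of the form $g\otimes(\text{rank-2})$ and so carry no Weyl component; thus your conclusion that the Weyl part of $R^\nabla$ agrees with $W^{[g]}$ survives, but the justification should be rewritten to reflect this.
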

Recall that the classical Beltrami theorem  \cite{Beltrami} says that projectively flat (pseudo)-Riemannian metrics have constant sectional curvature. As a result, the theorem above can be viewed as a conformal extension of   Beltrami's theorem. In Remark \ref{rmk:equivalent-description}  an equivalent description of this theorem is given and in Remark \ref{rmk:2D-weyl} we  contrast it with  projectively flat  Weyl structures on surfaces. 

Lastly, we give a CR characterization of the Einstein-Weyl condition and show that a Weyl structure $\nabla^{[g]}$ is Einstein-Weyl if and only if the CR complex structure on $\cT$ descends to the 4-dimensional space of (time-like) geodesics, also referred to as  the minitwistor space of the Einstein-Weyl structure  in \cite{Hitchin-EW}.

\subsection{Conventions}
\label{sec:conventions}

In this article we will work locally over smooth  manifolds. Given a distribution $\scD\subset TM,$  its derived system is the distribution whose sheaf of sections is given by $\Gamma(\scD)+[\Gamma(\scD),\Gamma(\scD)]$ and, by abuse of notation, is denoted as $[\scD,\scD].$ Similarly, given two distributions $\scD_1$ and $\scD_2,$ we denote by $[\scD_1,\scD_2]$ the distribution whose sheaf of sections is   $\Gamma(\scD_1)+\Gamma(\scD_2)+[\Gamma(\scD_1),\Gamma(\scD_1)]+[\Gamma(\scD_1),\Gamma(\scD_2)]+[\Gamma(\scD_2),\Gamma(\scD_2)].$  We denote by $\Omega^k(M)$ and $\Omega^k(M,\fg)$ the sheaf of sections of  $\biw^k T^*M$ and $\biw^k T^*M\otimes\fg,$ respectively.   The symmetric product of  1-forms is denoted by their multiplication, e.g. for two 1-forms $\alpha$ and $\beta$ we define $\alpha\beta=\half(\alpha\otimes\beta+\beta\otimes\alpha)$ and $\alpha^k$ denotes the $k$th symmetric power of $\alpha.$ The span of vector fields  $v_1,\dots,v_k\in\Gamma(T M)$  is denoted by $\langle v_1,\dots,v_k\rangle$ and the algebraic ideal generated by 1-forms $\omega^0,\dots,\omega^n\in\Omega^1(M)$ is denoted as $\{\omega^0,\dots,\omega^n\}.$

When dealing with a Lorentzian conformal structures on $M$,  the bundle of rays of  future time-like   1-forms is denoted by $\RT$. In the case of  conformal structures of Riemannian signature, $\RT$ coincides with the oriented projectivized cotangent bundle   $\PP_+T^*M:=T^*M\slash\RR^+,$ which is the double cover of $\PP T^*M,$   

  Given a principal bundle $\mu\colon\cC\to M,$ let $(\omega^i,\omega^i_j)$ be a coframe on $\cC$ such that $\omega^i$'s are semi-basic with respect to  $\mu\colon\cC\to M,$ i.e. $\omega^i(v)=0$ for all $v\in\Ker\mu_*.$ We denote the frame dual to this coframe by $(\partial_{\omega^i},\partial_{\omega^i_j}).$ Moreover, given  a  function $f$ on $\cC,$ its \emph{coframe derivatives}  are defined by
\[f_{;i}=\partial_{\omega^i}\im\exd f.\]
Except for Theorem \ref{thm:conf-beltrami}, it is assumed that $0\leq i,j,k,l\leq 2$ and    $[\ve_{ij}]=\mathrm{diag}(1,\ve,\ve),$ where  $\ve$ is either $1$ or $-1$ depending on the signature of the conformal structure: It is 1 for the Riemannian case and $-1$ for the Lorentzian case. We use the summation convention over repeated indices and denote symmetrization and skew-symmetrization by round and square brackets around affected indices, respectively, e.g. $\alpha_{(ij)k}=\half (\alpha_{ijk}+\alpha_{jik})$ and $\alpha_{[ij]k}=\half(\alpha_{ijk}-\alpha_{jik}).$ 
Lastly, given a complex-valued     function (or 1-form)  $f=f_1+\ri f_2$ its  real and imaginary parts are denoted by $\Re f=f_1$ and $\Im f=f_2.$

\section{Conformal structures and nondegenerate CR geometry} 
\label{sec:from-3d-conformal}
In this section we recall the notion of Cartan geometries and Weyl structures for the class of parabolic geometries. We review the Cartan geometric description of conformal structures in dimension three and the construction of their twistor bundle, also referred to as twistor CR manifolds. In \ref{sec:cr-structures-type-begin} we review the definition of  CR and para-CR structures of hypersurface type whose Levi bracket is nondegenerate or 2-nondegenerate. We finish this section with Proposition \ref{prop:algebr-type-CR-twist} where  a necessary condition is found for a 5-dimensional nondegenerate CR structure to arise as a  twistor CR manifold in terms of the root type of its holomorphic/fundamental binary quartic. 

\subsection{Parabolic geometries and Weyl structures}\label{sec:parab-goem-weyl}
We start by the definition of  a Cartan geometry and its Cartan curvature.

\begin{definition}
  Let $G$ be a Lie group and $P\subset G$ a Lie subgroup  with Lie algebras $\fg$ and $\fp\subset\fg,$ respectively.  A Cartan geometry of type $(G,P)$ on a manifold $N,$ denoted as $(\cG\to N,\psi),$ is a right principal $P$-bundle $\cG\to N$ equipped with a Cartan connection $\psi\in\Omega^1(\cG,\fg),$ i.e. a $\fg$-valued 1-form on $\cG$ satisfying
  \begin{enumerate}
  \item $\psi$ is $P$-equivariant, i.e. $R_g^*\psi=\mathrm{Ad}_{g^{-1}}\psi$ for all $g\in P,$ where $R_g$ denotes the right action by $g$.
  \item  $\psi_z\colon T_z\cG\to \fg$ is  a linear isomorphism for all $z\in \cG.$
  \item $\psi$ maps fundamental vector fields to their generators, i.e. $\psi(\zeta_X)=X$ for any $X\in\fp$ where $\zeta_X(z):=\frac{\exd}{\exd t}\,\vline_{\,t=0}R_{exp(tX)}(z).$
  \end{enumerate} 
 The 2-form $\Psi\in\Omega^2(\cG,\fg)$ defined as
    \[\Psi(X,Y)=\exd\psi(X,Y)+[\psi(X),\psi(Y)]\quad \text{for\ \ }X,Y\in \Gamma(T\cG),\]
is called the Cartan curvature and is $P$-equivariant and semi-basic with respect to the fibration $\cG\to N.$
\end{definition} 
A Cartan geometry of type $(G,P)$ where $G$ is (real or complex) semi-simple and $P\subset G$ is a parabolic subgroup is referred to as a \emph{parabolic geometry}. It is well-known that given a parabolic subgroup $P\subset G,$ the Lie algebras  $\fg$ and $\fp$ are equipped with gradings
\[\fg=\fg_{-k}\oplus\cdots\oplus\fg_{k},\quad \fp=\fp_0\oplus\fp_{+},\quad \fp_+=\fg_1\oplus\cdots\oplus\fg_{k},\]
where $\fp_+,\fp_0\subset\fp$ are the nilradical and a reductive Levi factor of  $\fp=\fp_0\ltimes\fp_+,$ respectively.
Let the Lie subgroups $P_0,P_+\subset P$ be the connected subgroup whose Lie algebras are $\fp_0,\fp_+,$ respectively.  
\begin{definition}\label{def:weyl-str}
  A (local)  Weyl structure for the parabolic geometry $(\cG\to M,\psi)$ is a (local) $P_0$-equivariant section $s\colon\cG_0\to\cG$ of the principal $P_+$-bundle $\cG\to\cG_0$  where $\cG_0\to M$ is the principal $P_0$-bundle defined as $\cG_0=\cG\slash P_+:=\cG\times_{P_+}P.$
\end{definition}
Given a Weyl structure $s\colon\cG_0\to\cG,$ let $\psi_0$ be the $\fg_0$-valued part of $s^*\psi.$ It is clear that, given a Weyl structure, $\psi_0$ defines a linear connection  on the tangent bundle for the underlying  \emph{$G$-structure} where  $G=P_0\subset \mathrm{GL}(n)$, which is referred to as a \emph{Weyl connection.} We refer the reader to \cite[Section 5]{CS-Parabolic} for more on Weyl structures.

In this article we will mainly focus on  local aspects of certain parabolic geometries, and, therefore, when mentioning a Weyl structure it is understood that we mean a local Weyl structure. Furthermore, since we will encounter Weyl structures for conformal, projective and CR geometries, we will sometimes refer to them as conformal, projective, and CR connections, respectively.

\subsection{Conformal geometry in dimension 3}\label{sec:conf-geom-dimens}
In this section we review the Cartan geometric description of conformal structures in dimension three.
A conformal  structure of signature $(p,q)$ on an $n$-dimensional connected manifold $M,$ where $n=p+q,$  is defined as the equivalence class of (pseudo-)Riemannian metrics of signature $(p,q)$ that are proportional via an everywhere positive function on $M,$ i.e. a  sub-bundle $[g]\subset \mathrm{Sym}^2(T^*M)$ defined as
\[[g]=\left\{ \lambda^2 g\ \vline\ \lambda\in C^{\infty}(M,\RR\setminus \{0\}) \right\},\]
  for some (pseudo-)Riemannian metric $g.$

  Conformal structures of signature $(p,q)$ define Cartan geometries  $(\mu\colon\cC\to M,\psi)$ of type $(\mathrm{SO}(p+1,q+1),Q_1)$ where $Q_1\subset \mathrm{SO}(p+1,q+1)$ is the parabolic subgroup that stabilizes a null line in $\RR^{p+1,q+1}.$ With a choice of orientation, which will be important in this paper, one can define oriented conformal structures as Cartan geometries of type $(\mathrm{SO}^o(p+1,q+1),Q_1)$ where $\mathrm{SO}^o(p+1,q+1)$ is the identity components of $\mathrm{SO}(p+1,q+1)$ and $Q_1$ stabilizes a null ray in $\RR^{p+1,q+1}.$  %
  As  parabolic geometries, they are among the so-called \emph{regular} and \emph{normal} parabolic geometries, which we will not elaborate on. 
The flat model for  conformal structures is the $n$-dimensional  hyperquadric $Q_{p,q}\subset\PP^{n+1}$ defined by the standard inner product of signature $(p+1,q+1)$ on $\RR^{p+1,q+1}.$ 

In dimension three,  the Cartan connection $\psi\in\Omega^1(\cC,\fso(p+1,q+1)),p+q=3,$ can be written as
\begin{equation}\label{eq:conf-cartan-conn}
      \psi=\begin{pmatrix}   
  -\theta^0 &\xi_2 & \xi_1 & \xi_0 & 0\\
  \omega^2 & 0 & -\theta^3 & -\ve\theta^2 & \ve\xi_2\\
    \omega^1 & \theta^3 & 0 & -\ve\theta^1 & \ve\xi_1\\
    \omega^0 & \theta^2 & \theta^1 & 0 & \xi_0\\
    0 & \ve\omega^2 & \ve\omega^1 & \omega^0 &   \theta^0
  \end{pmatrix},
  \end{equation}
where we are using the fact that the conformal geometry of metrics of signature $(p,q)$ and $(q,p)$ are equivalent since the Lie groups $\mathrm{SO}(p+1,q+1)$ and $\mathrm{SO}(q+1,p+1)$ are isomorphic.  The  symmetric bilinear form 
\begin{equation}\label{eq:conformal-metric}
  g=(\omega^0)^2+\ve(\omega^1)^2+\ve(\omega^2)^2\in\Gamma(\mathrm{Sym}^2(T^*\cC))
  \end{equation}
is well-defined and the conformal structure on $M$ is given by  $[s^* g]\subset \mathrm{Sym}^2(T^*M)$ for any section  $s\colon M\to \cC.$ When $\ve$ is $1$ or $-1,$ the conformal structure is Riemannian or Lorentzian, respectively.

The Cartan connection $\psi$ is $\fso(p+1,q+1)$-valued  with respect to the inner product
\[u\cdot u=-2u^1u^5+\ve (u^2)^2+\ve (u^3)^2+(u^4)^2,\quad u=(u^1,\ldots,u^5)\in\RR^5.\]
Its Cartan curvature  is 
\begin{equation}\label{eq:Cartan-curvature-conformal-conn}
 \exd\psi+\psi\w\psi=\begin{pmatrix}
    0 &  \Xi_2 & \Xi_1 & \Xi_0 & 0\\
  0 & 0 & 0  & 0 & \ve\Xi_2\\
  0 & 0 &      0  & 0 & \ve\Xi_1\\
  0 & 0 &  0 & 0 & \Xi_0\\
  0 & 0 & 0 & 0 &  0 
\end{pmatrix}
\end{equation}
where 
\begin{equation}\label{eq:cotton-york}
  \begin{aligned}
  \Xi_0=&Y_{001}\omega^0\w\omega^1+Y_{002}\omega^0\w\omega^2+Y_{012}\omega^1\w\omega^2,\\
  \Xi_1=&Y_{101}\omega^0\w\omega^1+Y_{102}\omega^0\w\omega^2-\ve Y_{002}\omega^1\w\omega^2,\\
  \Xi_2=&(Y_{102}-Y_{012})\omega^0\w\omega^1-Y_{101}\omega^0\w\omega^2+\ve Y_{001}\omega^1\w\omega^2.
\end{aligned}
\end{equation}
The five scalar functions $Y_{001},Y_{002},Y_{012},Y_{101},Y_{102}$ on $\cC$ constitute the entries of the Cotton-York tensor for $[g]$ whose vanishing implies flatness of the conformal structure.  

Lastly, recall that a 2-plane in $T_xM$  is null if its annihilator is a null 1-form with respect to the induced conformal structure on $T^*_xM.$ In the case of Riemannian conformal structures there is no real null 2-plane and in the Lorentzian signature there is an $\RR\PP^1$-parameter family in each tangent space. One can  parameterize (complex or real) null planes in $T_xM$ as
\begin{equation}\label{eq:complex-null-planes-3D}
  \cN^{\KK}_x:=\left\{\Ker\left\{z_0\omega^0+z_1\omega^1+z_2\omega^2\right\}\ \vline\ [z_0\colon z_1\colon z_2]\in\KK\PP^2, z_0^2+\ve z_1^2+\ve z_2^2=0\right\}\!\subset\! \mathrm{Gr}(2,\KK\otimes T_xM).
  \end{equation}
where $\KK$ is $\RR$ or $\CC$  in the case of real or complex null planes, respectively.  Later we will relate $\cN^\CC\backslash\cN^{\RR}$ to the bundle of rays of (future time-like) 1-forms on $M.$

\subsection{CR geometry in dimension 5}\label{sec:cr-structures-type-begin}
 In the definition below a $\delta$-CR structure is a CR or para-CR structure when $\delta$ is -1 or 1, respectively. Moreover, we use the notation $\KK_\delta:=\RR[\sqrt{\delta}]$ i.e.  $\KK_{1}=\RR$ and $\KK_{-1}=\CC.$
 \begin{definition}\label{def:cr-para-cr}
An almost $\delta$-CR structure of hypersurface type on a $(2n+1)$-dimensional manifold $N$ consists of 
a corank one distribution $\scC\subset TN$ equipped with a compatible almost $\delta$-complex structure 
 \begin{equation}\label{JJJ}
\cJ_\delta:\scC\to \scC,\quad  \cJ_\delta^2=\delta\Id, \quad \mathrm{Tr}(\cJ_\delta)=0,\quad
\mathcal{L}(\cJ_\delta X,\cJ_\delta Y)=-\delta\mathcal{L}(X,Y),
 \end{equation}
where $\mathcal{L}:\biw^2\scC\to TN/\scC$ is the Levi bracket of $\scC$ defined at every $x\in N$ as
 \[
\mathcal{L}(X,Y)=[\tilde{X},\tilde{Y}]_x\ \mathrm{mod\ }\scC_x\ \text{ for }\ 
X,Y\in \scC_x,\  \tilde{X},\tilde{Y}\in\Gamma(\scC), \ \tilde{X}_x=X,\ \tilde{Y}_x=Y. 
\]
Denoting  the  $\sqrt{\delta}$ and $-\sqrt{\delta}$ eigenspaces of $\cJ_\delta$  by $\scH,\cscH\subset\KK_\delta\scC:=\scC\otimes\KK_{\delta},$ respectively,  a $\delta$-CR structure is an almost $\delta$-CR structure for which $\scH$ and $\cscH$ are  integrable.

A nondegenerate (almost) $\delta$-CR structure is defined by its Levi bracket being nondegenerate everywhere. If the Levi bracket  has degeneracy, the kernel of $\mathcal{L},$ denoted as $K,$ is equipped with the splitting $\scK\oplus \cscK=K\otimes\KK_\delta,$ where $\scK\subset \scH,\cscK\subset \cscH.$ Define the  higher Levi bracket $\mathcal{L}_2:\scK\otimes \cscH\to \scH/\scK$   by
\begin{equation}\label{eq:2nondeg-Levi-bracket}
  \mathcal{L}_2(X,Y)=[\tilde{X},\tilde{Y}]_x\ \mathrm{mod\ }(\scK_x\oplus \cscH_x)\ \text{ for }\ 
X\in \scK_x, \ Y\in \cscH_x,
 \end{equation}
in which  $\tilde{X}\in\Gamma(\scK)$,  $\tilde{Y}\in\Gamma(\cscH)$,  $\tilde{X}_x=X$,  $\tilde{Y}_x=Y$ with
its conjugate $\overline{\mathcal{L}}_2:\cscK\otimes \scH\to \cscH/\cscK$  defined similarly.
A $\delta$-CR structure is called 2-nondegenerate if $\mathcal{L}_2(X,\cscH)=0$ implies $X=0$ 
and similarly for the conjugate $\overline{\mathcal{L}}_2$.   
 \end{definition}
 \begin{remark}
   Identifying $TN\slash\scC$ at each point $x\in N$ with $\RR,$ the Levi bracket $\cL$ when $\delta=-1$ corresponds to the imaginary part of a Hermitian inner product of signature $(p,q), p+q=n,$ on $\scC\otimes\CC,$ defined up to a conformal factor. The real part of this sub-conformal Hermitian structure on $\scC\otimes\CC$ is given by $g(X,Y)=\cL(X,\cJ_{-1} Y),$ for $X,Y\in\Gamma(\scC),$ which also has signature $(p,q).$ When $\delta=1,$ after composition with the para-complex structure, $\cJ_{1}$, the symmetric bilinear form $g$ on $\scC$ has signature $(n,n)$ and is the real  part of a  subconformal para-Hermitian structure.  The induced action of $\cJ_\delta$ on $\KK_\delta\scC:=\KK_\delta\otimes\scC$ results in the splitting $\KK_\delta\scC=\scH\oplus\cscH,$ where $\scH$ and $\cscH$ are (para-)holomorphic and anti-(para-)holomorphic sub-bundles of $\KK_\delta\scC$ for $\cJ_\delta,$ respectively. Note that as a result of the compatibility condition \eqref{JJJ}, in the para-CR case both $\scH$ and $\cscH$ are null with respect to the induced inner product on $\KK_1\scC\cong \scC.$ 
   Furthermore, the  compatibility condition  in the para-CR case implies that  in the splitting  $\scC=\scH\oplus\cscH$ the integrable distributions $\scH$ and $\cscH$ are Lagrangian  with respect to the induced symplectic structure on $\scC.$ As a result, a para-CR structure is also referred to as an integrable contact Legendrian/Lagrangian/Lagrangean structure. In the definition above when $N$ is 3-dimensional the eigenspaces $\scH$ and $\cscH$ have rank one and, therefore, are automatically integrable. Furthermore, for 2-nondegenerate CR-structures ($\delta=-1$) the nondegeneracy condition on $\bcL_2$   follows automatically, unlike the para-CR case ($\delta=+1$).
Lastly,   when the subconformal $\delta$-Hermitian structure on $\scC$ is nondegenerate, the trace condition in \eqref{JJJ} is automatic for $\delta=- 1$, however when $\delta=1,$ it ensures that the eigenspaces $\scH,\cscH$  have equal rank.
\end{remark}

Solving the equivalence problem for nondegenerate CR structures goes back to \cite{Cartan-CR1,CM-CR,Tanaka-CR,Webster-CR}. Here we recall such a solution in the case of 5-dimensional nondegenerate CR structures of signature (1,1) as  regular and normal parabolic geometries, which is a special case of what appears in \cite[Equation (5.33) and Appendix]{CM-CR} wherein  the term Cartan geometry does not appear. 

\begin{theorem}\label{thm:CR-equiv-problem}     
  A 5-dimensional  CR structure   with Levi form of signature (1,1) is a Cartan geometry  $(\cQ\to N,\phi)$ of type $(\mathrm{SU}(2,2),P)$ where $P$ is the stabilizer of a  null line in $\CC^{2,2}.$   The  Cartan connection and curvature can be expressed as
  \begin{equation}\label{eq:5DCR-Cartan-conn}
         \def\arraystretch{1.2}
\gamma=  \begin{pmatrix}
    -\phi^0_0 & \pi & \sigma &\sigma_0\\
    \beta & \phi^1_1 & \ri\phi^1_2 & \csigma\\
    -\ri\alpha & \ri\phi^2_1 &  -\cphi^1_1 & \cpi\\
    \ri\alpha^0 & \ri\calpha &  \cbeta & \cphi^0_0
  \end{pmatrix} ,\quad    %
  \exd\gamma+\gamma\w\gamma=
   \begin{pmatrix}
     -\Phi^0_0 & \Pi & \Sigma & \ri\Sigma_0 \\
   0 & \Phi^1_1 & \ri\Phi^1_2 & \overline\Sigma\\
     0 & \ri\Phi^2_1 &  -\overline\Phi^1_1 & \overline\Pi\\
     0 & 0 &  0 & \overline\Phi^0_0
   \end{pmatrix}
   \end{equation}
 which  are $\mathfrak{su}(2,2)$-valued with respect to the Hermitian  form  
 \[H(Z)=Z_4\overline Z_1-Z_1\overline Z_3-Z_3\overline Z_1+Z_1\overline Z_4,\quad Z=(Z_1,Z_2,Z_3,Z_4)\in\CC^4,\]
 where  $\alpha=\alpha^1+\ri\alpha^2$ and $\beta=\beta^1+\ri\beta^2,$ 
 $\Im\Phi^1_1=0,$ and
 \[
   \begin{aligned}
     \Phi^2_1\equiv& C_0\beta^1\w\beta^2+C_1(\alpha^1\w\beta^2-\alpha^2\w\beta^1)+C_2\alpha^1\w\alpha^2, \\
     \Re\Phi^1_1\equiv& C_1\beta^1\w\beta^2+C_2(\alpha^1\w\beta^2-\alpha^2\w\beta^1)+C_3\alpha^1\w\alpha^2, \\
     \Phi^1_2\equiv & C_2\beta^1\w\beta^2+C_3(\alpha^1\w\beta^2-\alpha^2\w\beta^1)+C_4\alpha^1\w\alpha^2, \\
   \end{aligned}
 \]
modulo $\{\alpha^0\},$ for  real-valued functions $C_0,\cdots,C_4$  on $\cQ.$  The fundamental (harmonic) curvature, also referred to as the Chern-Moser curvature, can be represented by a    weighted binary holomorphic quartic $C\in\Gamma(\mathrm{Sym}^4\scC^*)\otimes(\biw^2\scH)^{-1}\otimes(\scC^{\perp})^{\half}, $ where, for any section $s\colon N\to\cQ,$
 \begin{equation}\label{eq:W-CR-quartic}
   C=s^*\!\!\left(C_4(\alpha)^4 +4C_3(\alpha)^3\beta +6C_2(\alpha)^2(\beta)^2 +4C_1\alpha(\beta)^3 +C_0(\beta)^4\right)\!\otimes\! \left(\tfrac{\partial}{\partial s^*\alpha}\w \tfrac{\partial}{\partial s^*\beta}\right)^{-1}\!\!\!\!\otimes \left(s^*\alpha^0\right)^{\half}.
 \end{equation}
 The vanishing of $C$ implies  $\Phi=0,$ i.e. $N$ is locally equivalent to the indefinite hyperquadric  in $\CC^3.$ 
\end{theorem}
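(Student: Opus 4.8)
The statement is the classical solution of the equivalence problem for Levi nondegenerate hypersurface-type CR five-manifolds of signature $(1,1)$, due in various forms to \cite{Cartan-CR1,CM-CR,Tanaka-CR,Webster-CR}, recast here in the language of parabolic geometry; I outline the route I would follow, leaning on the general theory as in \cite{CS-Parabolic}.

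\emph{Step 1: the flat model and the underlying structure.} Let $\fg=\fsu(2,2)$ for the Hermitian form $H$ and let $\fp\subset\fg$ be the stabilizer of the null complex line $\langle Z_1\rangle$. The associated grading element induces a contact ($|2|$-)grading $\fg=\fg_{-2}\oplus\fg_{-1}\oplus\fg_0\oplus\fg_1\oplus\fg_2$ in which $\fg_{-2}\cong\RR$ is the contact line, $\fg_{-1}\cong\CC^2$ carries the complex structure and the Hermitian form of signature $(1,1)$ induced by $H$, and $\fg_0\cong\mathfrak{u}(1,1)\oplus\RR$. Thus $\fg_-=\fg_{-2}\oplus\fg_{-1}$ is the model Heisenberg CR algebra in dimension five and $P=P_0\ltimes\exp(\fp_+)$ is parabolic. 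One then checks that a regular infinitesimal flag structure of type $(\fg,\fp)$ on a five-manifold $N$ is exactly a corank-one distribution $\scC\subset TN$ with a compatible complex structure of Levi signature $(1,1)$, the regularity being the tensoriality and nondegeneracy of the Levi bracket; normality is imposed by the Kostant codifferential condition $\partial^*\Psi=0$.

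\emph{Step 2: existence and uniqueness of the canonical Cartan connection.} Since the first Lie algebra cohomology $H^1(\fg_-,\fg)$, computed by Kostant's theorem, is concentrated in positive homogeneous degree, the fundamental theorem for parabolic geometries applies: every regular normal parabolic geometry of type $(\fg,\fp)$ is determined up to isomorphism by its underlying structure, and conversely every such structure prolongs to a unique regular normal geometry. Specializing, every signature-$(1,1)$ CR five-manifold $N$ carries a canonical principal $P$-bundle $\cQ\to N$ with a unique regular normal Cartan connection $\gamma\in\Omega^1(\cQ,\fsu(2,2))$. Expressing $\gamma$ in a Witt basis of $\CC^4$ adapted to $H$ produces the matrix shape \eqref{eq:5DCR-Cartan-conn}, the reality relations ($\alpha=\alpha^1+\ri\alpha^2$, $\Im\Phi^1_1=0$, and the conjugation symmetry between the strictly lower and strictly upper blocks) being precisely the condition that $\gamma$ be $\fsu(2,2)$-valued. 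Equivalently, the same normal form is obtained by running Cartan's method of equivalence by hand: adapt coframes to $(\scC,\cJ)$, reduce the structure group through the $P_0$- and $\exp(\fp_+)$-steps, absorb torsion using CR integrability, and prolong.

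\emph{Step 3: the harmonic curvature is a weighted binary quartic, and its vanishing gives flatness.} By the general theory the full curvature $\Gamma$ is recovered, degree by degree via the Bianchi identity and the invertibility of the Kostant Laplacian $\Box$ off the harmonic subspace, from its harmonic part $\Gamma_H\in\Gamma(\ker\Box)$, where $\ker\Box\cong H^2(\fg_-,\fg)$. Kostant's theorem identifies the positive-homogeneity part of $H^2(\fg_-,\fg)$ as a single irreducible $P_0$-module, which as an associated bundle is $\mathrm{Sym}^4\scC^*\otimes(\biw^2\scH)^{-1}\otimes(\scC^{\perp})^{\half}$, a weighted binary holomorphic quartic. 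Matching this against the structure equations shows that $\Gamma_H$ is exactly the $2$-form-valued object with coefficients $C_0,\dots,C_4$, giving \eqref{eq:W-CR-quartic}; these $C_i$ are the entries displayed in $\Phi^2_1$, $\Re\Phi^1_1$, $\Phi^1_2$ modulo $\{\alpha^0\}$, while $\Phi^0_0$, $\Pi$, $\Sigma$, $\Sigma_0$ are determined by the $C_i$ and their coframe derivatives through the Bianchi identities. Finally, if $C=0$ then $\Gamma_H=0$, and the homogeneity induction just mentioned forces $\Gamma=0$, so $(\cQ\to N,\gamma)$ is locally isomorphic to the flat model $\mathrm{SU}(2,2)/P$, i.e. $N$ is locally the indefinite hyperquadric in $\CC^3$. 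The conceptual steps being standard, the real work lies in the bookkeeping: the Kostant computation pinning down the precise $P_0$-module, hence the weight $(\biw^2\scH)^{-1}\otimes(\scC^{\perp})^{\half}$ of the quartic, and the reconciliation of the abstract normal connection with the hand-normalized matrix \eqref{eq:5DCR-Cartan-conn} (checking that the absorption-of-torsion choices coincide with $\partial^*\Psi=0$ and unwinding the Bianchi identities to confirm the listed entries of $\Gamma$ are the harmonic representative); I would organize this by homogeneous degree and cross-check against the Levi-definite Chern–Moser case.
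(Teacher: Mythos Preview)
The paper does not actually prove this theorem: it is stated as a classical result with attribution to \cite{Cartan-CR1,CM-CR,Tanaka-CR,Webster-CR}, and for the full Cartan curvature the reader is referred to \cite[Appendix]{CM-CR}. Your outline via the general parabolic machinery (regular normal prolongation, Kostant's computation of $H^2(\fg_-,\fg)$, and the homogeneity induction showing $\Gamma_H=0\Rightarrow\Gamma=0$) is a correct and standard modern route to the same statement, so there is nothing to compare against beyond noting that you have supplied an argument where the paper simply cites one.
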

The full Cartan curvature will not be needed here, e.g. see \cite[Appendix]{CM-CR} for more details. 
\begin{remark}\label{rmk:cr-geom-remarks}
  Note that the weighted holomorphic quartic \eqref{eq:W-CR-quartic} is invariantly defined on the contact distribution at each point. Similarly, its conjugate defines a weighted anti-holomorphic quartic on the contact distribution, namely, $\overline C\in \Gamma(\mathrm{Sym}^4\scC^*)\otimes(\biw^2\cscH)\otimes(\scC^{\perp})^{-\half},$ where
\[\overline C=s^*\left(C_4(\calpha)^4 +4C_3(\calpha)^3\cbeta +6C_2(\calpha)^2(\cbeta)^2 +4C_1\calpha(\cbeta)^3 +C_0(\cbeta)^4\right)\otimes \left(\tfrac{\partial}{\partial s^*\calpha}\w \tfrac{\partial}{\partial s^*\cbeta}\right)\otimes \left(s^*\alpha^0\right)^{-\tfrac 12}.\]
\end{remark}

The fundamental binary quartic  of a indefinite CR 5-manifold   can be interpreted as an obstruction to the integrability of an $\SSS^1$-bundle of   Lagrangian 2-planes in $\scC$ that are null with respect to the indefinite subconformal structure on $\scC,$ as was first observed in \cite{Bryant-HolomorphicCurves}. More generally, almost CR structures of signature $(1,1)$ for which the binary quartic above is zero has been studied in-detail in \cite{KM-subconformal}, where it is shown that they are equipped with a 4-parameter family of foliations by null and Lagrangian 2-planes,  providing a 5-dimensional analogue of half-flatness in 4-dimensional conformal geometry,

Recall that in a non-degenerate CR structure of hypersurface type if $\alpha^0$ is a contact 1-form, i.e. $\scC=\Ker\{\alpha^0\},$ then $\rho:=\exd\alpha^0$ defines a symplectic 2-form on $\scC,$ which is related to the Levi-bracket via $\alpha^0(\cL(X,Y))=\alpha^0([X,Y])=-\rho(X, Y).$ The corresponding sub-Hermitian metric, also referred to as the Levi form, is defined as $h(X,Y)=\frac{1}{2\ri}\rho(X,\bar Y)$ for $X,Y\in\Gamma(\scC\otimes\CC).$  Using the almost complex structure $\cJ\in\mathrm{Aut}(\scC),$ with $\cJ^2=-\mathrm{Id},$ the  symmetric bilinear form $g(X,Y):=\rho(X,\cJ Y),$ for $X,Y\in\Gamma(\scC),$ is the real part of $h$ and  has signature $(p,q).$ In general, since there is no distinguished contact 1-form, the symplectic 2-form, $\rho,$ and the symmetric bilinear form, $g,$ are defined up to a conformal factor.  In  CR structures  with Levi form of signature (1,1), in terms of the Cartan connection \eqref{eq:5DCR-Cartan-conn},  the conformal symplectic structure $[\rho]\subset\biw^2\scC^*$ and indefinite subconformal structure $[g]\subset\mathrm{Sym}^2(\scC^*)$ are given by
\begin{equation}\label{eq:sympl-subconf}
  \rho=\alpha^1\w\beta^1+\alpha^2\w\beta^2=\half\alpha\w\cbeta+\half\calpha\w\beta,\quad g=\alpha^1\beta^2-\alpha^2\beta^1=\tfrac{\ri}{2}(\alpha\cbeta-\calpha\beta),
\end{equation}
where, by abuse of notation, we have suppressed the pull-back of the 1-forms in \eqref{eq:5DCR-Cartan-conn} by a section $s\colon N\to\cQ,$ and, as stated in Theorem \ref{thm:CR-equiv-problem}, $\alpha=\alpha^1+\ri\alpha^2,$ $\beta=\beta^1+\ri\beta^2$ belong to the dual $\ri$-eigenspace of $\cJ,$ and are referred to as the holomorphic 1-form on $\scC\otimes\CC.$

As a result, the set of 2-planes in $\scC$ that are null with respect to $g$ define two   $\PP^1$-bundles $\cA,\cB\subset\mathrm{Gr}(2,TN)$ over $N$ whose fibers at a point $x\in N$ can be parametrized as follows
\begin{equation}\label{eq:alpha-beta-planes}
  \begin{aligned}
    \cA_x:=&\left\{\Ker\left\{\alpha^0,a_2\alpha^1-a_1\beta^1,a_2\alpha^2-a_1\beta^2\right\}\ |\ [a_1:a_2]\in \RR\PP^1\right\}\subset \mathrm{Gr}(2,T_xN),\\      
    \cB_x:=&\left\{\Ker\left\{\alpha^0,b_1\alpha^1-b_2\alpha^2,b_1\beta^1-b_2\beta^2\right\}\ |\ [b_1:b_2]\in \RR\PP^1\right\}\subset \mathrm{Gr}(2,T_xN).
      \end{aligned}
    \end{equation}
    By analogy with 4-dimensional indefinite conformal structures, we refer to the family $\cA_x$ and $\cB_x$ as $\alpha$-planes and $\beta$-planes at $x.$ Furthermore, we note that all $\alpha$-planes are Lagrangian with respect to $[\rho].$ However, among $\beta$-planes there is no real Lagrangian plane. The complexified $\beta$-planes for which $[b_1:b_2]$ is $[1:\ri]$ and $[1:-\ri]$ correspond to $\scH,\cscH\subset \scC,$ respectively, and are the only Lagrangian planes among the complexified $\beta$-planes, whose integrability encodes  the complex structure on $\scC$.

    Using the dual frame, every $\alpha$-plane $P_a\in\cA_x,$ $a=[a_1:a_2]$ is given by $\langle a_1\partial_{\alpha^1}+a_2\partial_{\beta^1},a_1\partial_{\alpha^2}+a_2\partial_{\beta^2}\rangle,$ and, thus, contains a unique contact holomorphic  direction $\langle V_a\rangle\subset P_a^\CC$ where
    \[V_a= a_1\partial_{\alpha}+a_2\partial_{\beta}\in\Gamma(\scH_x),\]
and $\partial_{\alpha}=\half (\partial_{\alpha^1}-\ri\partial_{\alpha^2})$ and  $\partial_{\beta}=\half(\partial_{\beta^1}-\ri\partial_{\beta^2}).$    Evaluating the holomorphic quartic $C$ on  holomorphic vector fields in complexified $\alpha$-planes at $x\in N$ results in a weighted binary quartic. More explicitly, if  $a=[a_1:a_2]\in\PP^1,$ and $V_a\in P_a^\CC$ is a contact holomorphic vector field, then one can define
    \begin{equation}\label{eq:quartic}
      Q_a=C(V_a,V_a,V_a,V_a)=\lambda(C_4a_1^4+4C_3a_1^3a_2+6C_2a_2^2a_1^2+4C_1a_1a_2^3+C_0a_2^4),
          \end{equation}
for some non-zero scalar  $\lambda$. By our discussion above, the roots of the binary quartic $Q_a$ determine at most four Lagrangian null planes at $x$, which are used in  the following definition.
    \begin{definition}\label{def:principal-null-plane}
A Lagrangian null  plane  is called a principal null plane of multiplicity $k$ if it corresponds to a real root of the binary quartic $Q_a$ in \eqref{eq:quartic} with multiplicity $k$.
\end{definition}
It is straightforward to verify that the notion of a principal null plane is independent of the choice of parametrization of $\alpha$-planes. The terminology is in analogy with the notion of a principal null direction in 4-dimensional Lorentzian geometry.
\begin{remark}\label{rmk:principal-null-planes}
 Naturally, if the binary quartic $C$ is identically zero, i.e.  the CR structure is flat, then every $\alpha$-plane is a principal null plane. If the binary quartic is non-zero, then  at each point there are at most four principal null planes.  These properties suggest a notion of Petrov type for 5-dimensional CR structures, based on the number of roots of the binary quartic and their multiplicity and reality. The only important case in this article is the case when the quartic has a real root of multiplicity four, which would be the analogue of Petrov type $\sN$ is the context of 4-dimensional Lorentzian conformal geometry. 
\end{remark}

    Since any  integrable  distribution of contact 2-planes on $N$ have to be Lagrangian, assuming that they are also null with respect to $[g]$ implies that the contact 2-planes are $\alpha$-planes. The next proposition shows that being principal is a necessary condition for an integrable field of  null contact 2-planes, which has been first observed in \cite{Bryant-HolomorphicCurves}. 
\begin{proposition}\label{prop:quartic-integrability}
Given an indefinite  CR 5-manifold $(N,J,\scC),$ if $\scD\subset\scC$ is an integrable distribution of rank 2 that is null with respect to the induced subconformal structure on $\scC,$ then $\scD$ is  a principal null plane everywhere.
\end{proposition}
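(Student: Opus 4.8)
The plan is to work with an adapted local section $s\colon N\to\cQ$ so that the structure equations \eqref{eq:5DCR-Cartan-conn} become a coframe identity on $N$, with $\alpha^0,\alpha^1,\alpha^2,\beta^1,\beta^2$ a coframe for $\scC^*$ and the remaining 1-forms expressed (up to terms in $\{\alpha^0\}$) via the Cartan connection. Since $\scD$ is a rank-2 integrable contact distribution, it is automatically Lagrangian with respect to the conformal symplectic structure $[\rho]$ in \eqref{eq:sympl-subconf}; combined with the hypothesis that $\scD$ is null for $[g]$ in \eqref{eq:sympl-subconf}, it follows from the parametrizations in \eqref{eq:alpha-beta-planes} that $\scD$ must be one of the $\alpha$-planes, say $\scD=P_a$ with $a=[a_1:a_2]$, so that $\scD=\Ker\{\alpha^0,\,a_2\alpha^1-a_1\beta^1,\,a_2\alpha^2-a_1\beta^2\}$, where now $a_1,a_2$ are functions on (a neighbourhood in) $N$ with $(a_1,a_2)\neq(0,0)$.

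Next I would impose the integrability (Frobenius) condition on $\scD$. Writing $\eta^0=\alpha^0$, $\eta^1=a_2\alpha^1-a_1\beta^1$, $\eta^2=a_2\alpha^2-a_1\beta^2$, integrability of $\scD$ is equivalent to $\exd\eta^0\equiv\exd\eta^1\equiv\exd\eta^2\equiv 0$ modulo the ideal $\{\eta^0,\eta^1,\eta^2\}$. The $\exd\eta^0$ condition is the contact/Lagrangian condition and is already known; the interesting information comes from $\exd\eta^1$ and $\exd\eta^2$. Substituting the structure equations for $\exd\alpha^i$ and $\exd\beta^i$ (the $\fg$-part of \eqref{eq:5DCR-Cartan-conn}) and using that the torsion is normalized, the $\{\alpha^0\}$-free parts of $\exd\eta^1,\exd\eta^2$ split into (i) terms linear in $\exd a_1,\exd a_2$ and in the connection 1-forms $\phi^i_j,\pi,\sigma$, which only constrain how the section/connection behaves along $\scD$ and can be absorbed, and (ii) a purely algebraic ``curvature'' term coming from $\Phi^2_1,\Re\Phi^1_1,\Phi^1_2$ as displayed in Theorem \ref{thm:CR-equiv-problem}. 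Collecting this algebraic term and reducing modulo $\{\eta^0,\eta^1,\eta^2\}$ — i.e. setting $a_2\alpha^i\equiv a_1\beta^i$ — one gets precisely the vanishing of the quartic $C_4a_1^4+4C_3a_1^3a_2+6C_2a_2^2a_1^2+4C_1a_1a_2^3+C_0a_2^4$ up to a nonzero factor, which by \eqref{eq:quartic} says $Q_a=0$, i.e. $[a_1:a_2]$ is a root of the fundamental binary quartic and hence $\scD=P_a$ is a principal null plane at each point. Since both the notion of principal null plane (by the remark after Definition \ref{def:principal-null-plane}) and this computation are parametrization-independent, the conclusion holds invariantly on all of $N$.

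The main obstacle I expect is bookkeeping: one must verify that when the exterior derivatives $\exd\eta^1,\exd\eta^2$ are reduced modulo $\{\eta^0,\eta^1,\eta^2\}$, the connection-dependent and $\exd a_i$-dependent pieces genuinely drop out, leaving only the harmonic-curvature term, and that the resulting $2\times 2$ system (one equation from $\exd\eta^1$, one from $\exd\eta^2$) is proportional to the single scalar $Q_a$ rather than to some larger ideal. This is where the specific normalization of the Cartan curvature in \eqref{eq:5DCR-Cartan-conn} — namely that $\Phi$ has only the components $\Phi^2_1,\Re\Phi^1_1,\Phi^1_2$ with the displayed symmetric shape in $C_0,\dots,C_4$ — does the real work: it forces the two integrability obstructions to be the two ``derivatives'' of one and the same binary quartic evaluated at $[a_1:a_2]$, hence to vanish exactly when $Q_a=0$. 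A clean way to organize this, following \cite{Bryant-HolomorphicCurves}, is to pass to the complexification, use the eigen-coframe $\alpha=\alpha^1+\ri\alpha^2$, $\beta=\beta^1+\ri\beta^2$, note $\scD^\CC=\Ker\{\alpha^0,\,a_2\alpha-a_1\beta\}$, and compute $\exd(a_2\alpha-a_1\beta)\wedge(a_2\alpha-a_1\beta)\wedge\alpha^0$, which collapses directly to $Q_a$ times a volume form; then conjugate. I would present the complexified computation as the core step and relegate the reality bookkeeping to a line or two.
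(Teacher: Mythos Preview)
There is a genuine gap in your argument: the curvature term you expect to see in $\exd\eta^1,\exd\eta^2$ is simply not there at the first Frobenius step. In the Cartan system \eqref{eq:5DCR-Cartan-conn} the curvature 2-forms $\Phi^2_1,\Re\Phi^1_1,\Phi^1_2$ appear only in $\exd\phi^2_1,\exd(\Re\phi^1_1),\exd\phi^1_2$; the structure equations for the soldering forms $\exd\alpha^i,\exd\beta^i$ are torsion-free and involve \emph{only} the connection 1-forms $\phi^i_j$. Hence computing $\exd\eta^1,\exd\eta^2$ modulo $\{\eta^0,\eta^1,\eta^2\}$ produces no $C_i$'s at all: what you obtain is precisely the paper's relation \eqref{eq:dlmabda}, namely
\[
\exd\lambda\equiv \lambda^2\phi^1_2+2\lambda\Re\phi^1_1+\phi^2_1 \quad\text{mod }\cI_\lambda,
\]
which is a constraint on $\exd\lambda$ in terms of connection forms, not a curvature obstruction. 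The $\exd a_i$- and $\phi^i_j$-terms do not ``drop out''---they \emph{are} the content of the first integrability condition.

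The quartic enters only at the next step: one differentiates \eqref{eq:dlmabda} and uses $\exd^2\lambda=0$ together with the structure equations for $\exd\phi^2_1,\exd(\Re\phi^1_1),\exd\phi^1_2$, where the harmonic curvature $\Phi^i_j$ finally appears. Reducing modulo $\cI_\lambda$ then yields $(C_4\lambda^4+4C_3\lambda^3+6C_2\lambda^2+4C_1\lambda+C_0)\beta^1\wedge\beta^2\equiv0$, i.e.\ $Q_a=0$. Your complexified shortcut $\exd(a_2\alpha-a_1\beta)\wedge(a_2\alpha-a_1\beta)\wedge\alpha^0$ is only a 4-form on a 5-manifold (you are missing a $\bar\eta$), and in any case still contains no curvature; the same prolongation step is unavoidable. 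So the overall strategy is right, but you must insert the $\exd^2\lambda=0$ step between the Frobenius condition and the conclusion.
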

\begin{proof}
  Since  $\scD\subset \scC$ is  null and contact, as was mentioned above, its integrability implies that it is   everywhere Lagrangian. Thus, it defines a section $s\colon N\to\cA$ of $\alpha$-planes given by $\Ker\{\alpha^0,a_2\alpha^1-a_1\beta^1,a_2\alpha^2-a_1\beta^2\},$ for functions $a_2,a_1\in C^\infty(N,\RR).$ %
  Let us explore differential consequences for the integrability of Pfaffian system $\cI_\lambda:=\{\alpha^0,\alpha^1-\lambda\beta^1,\alpha^2-\lambda\beta^2\}$, where $\lambda=a_1/a_2\in C^{\infty}(N,\RR\PP^1)$.

  Expressing the   integrability of $\cI_\lambda$ using the Cartan curvature \eqref{eq:5DCR-Cartan-conn}, it follows that $\exd\alpha^0\equiv 0$ modulo $\cI_\lambda$. Furthermore,  working modulo the ideal $\cI_\lambda$ allows one to set $\alpha^0\equiv 0$ and $\alpha^i\equiv \lambda\beta^i$ for $i=1,2.$ Hence, one obtains 
  \[
    \begin{aligned}
      \exd(\alpha^1-\lambda\beta^1)\equiv& \beta^1\w (\exd\lambda-\lambda^2\phi^1_2-2\lambda\Re\phi^1_1-\phi^2_1), \\
      \exd(\alpha^2-\lambda\beta^2)\equiv&  \beta^2\w (\exd\lambda-\lambda^2\phi^1_2-2\lambda\Re\phi^1_1-\phi^2_1 ). 
    \end{aligned}\]
  modulo $\cI_\lambda.$ It follows that,  modulo $\cI_\lambda,$
  \begin{equation}\label{eq:dlmabda}
    \exd\lambda\equiv \lambda^2\phi^1_2+2\lambda\Re\phi^1_1+\phi^2_1.
  \end{equation}
Examining the identity $\exd^2\lambda=0$ together with \eqref{eq:dlmabda} and   the Cartan curvature \eqref{eq:5DCR-Cartan-conn}, one obtains
  \[(C_4\lambda^4+4C_3\lambda^3+6C_2\lambda^2+4C_1\lambda+C_0)\beta^1\w\beta^2\equiv 0.\]
  The algebraic condition above   implies that $\lambda$ is a root of the binary quartic \eqref{eq:quartic}, expressed in affine coordinates $a_1/a_2,$ i.e. the integrability of $\cI_\lambda$ implies that $\lambda$ is   a principal null plane everywhere.   
\end{proof}

\subsection{Twistor CR manifolds and their Petrov type}  
\label{sec:lebr-feff-constr}
The twistor bundle of an even-dimensional geometric structure   is generally thought of as the bundle of  compatible almost complex structures, e.g. see \cite{Twistors1,Twistors2}.  The  twistor bundle of  3-dimensional oriented conformal structures, denoted as $\cT,$  is defined in \cite{LeBrun-CR,MasonPhD} as the bundle of projectivized complex null planes. Alternatively,  noting that every (time-like) plane carries a canonical almost complex structure via the pull-back of the (Lorentzian) Riemannian conformal structure, one expects the bundle of positively oriented (time-like) planes, which can be identified with the bundle of rays of (future time-like)  1-forms, to serve the role of $\cT$. We briefly overview these two constructions, starting with the latter.

Define $\cT=\RT,$  where $\RT$ denotes the bundle of rays in $T^*M$ with respect to a Riemannian conformal structure, i.e. the  two-fold cover of $\PP T^*M$, while in the Lorentzian signature it denotes the rays of future time-like 1-forms. Thus, the twistor bundle $\widetilde\mu\colon\cT\to M$ is a 5-manifold whose fibers are the 2-sphere, $\SSS^2,$ if $[g]$ is Riemannian or the 2-disk, $\DD^2,$ if $[g]$ is Lorentzian. To view $\hat\mu\colon\cC\to\cT$ as an associated bundle of $\mu\colon\cC\to M,$  recall that  $(\cC\to M,\psi)$ has type $(\mathrm{SO}^o(p+1,q+1),Q_1),$ where $(p,q)$ is either (3,0) or (2,1), $Q_{1}=Q_0\ltimes\RR^{3},$ $Q_0=\mathrm{CO}^o(p,q)\cong R^+\times\mathrm{SO}^o(p,q)$ is the structure group, $\mathrm{SO}^o(2,1)$ is the identity component of $\mathrm{SO}(2,1)$ and  $\mathrm{SO}^o(3,\RR)=\mathrm{SO}(3,\RR).$   The stabilizer of  a positively oriented  (time-like) plane defines a subgroup $S\subset Q_1.$ Thus, one obtains that  $\hat\mu\colon\cC\to\cT$ is a principal $S$-bundle i.e.
  \begin{equation}\label{eq:Tg-bundle}
    \cT\cong\cC\slash S:=\cC\times_{Q_1}(Q_1\slash S),\quad \text{where}\quad  S=\mathrm{CO}(2,\RR)\ltimes \RR^3.
  \end{equation}
Consequently, one recovers the fact that the fiber of $\widetilde\mu\colon\cT\to M$  at a point $x\in M$ is
\begin{equation}\label{eq:fibers-cT}
     \cT_x=Q_1\slash S=\mathrm{SO}^o(p,q)\slash \mathrm{SO}(2,\RR)\cong \left\{
      \begin{aligned}
        &\SSS^2\quad (p,q)=(3,0),\\
        &\DD^2\quad (p,q)=(2,1).\\
      \end{aligned}\right.
  \end{equation}

  \begin{remark}\label{rmk:twistor-paracr-manifold}
In Lorentzian signature,  the action of $Q_1$ on each oriented projectivized cotangent space has five orbits given by future and past rays of time-like 1-forms, rays of space-like 1-forms, and  rays of future and past null 1-forms. 
 One  can  define a  twistor bundle using space-like planes in each tangent space equipped with their canonically induced almost para-complex structure. The construction goes through as before  with the only difference being the fibers which are  $\mathrm{SO}(2,1)\slash\mathrm{SO}(1,1)\cong \RR\times\RR\PP^1.$ 
\end{remark}

As was mentioned above,  $\cT$ was originally defined as  a connected component of the set of complexified null 2-planes without any real null 2-planes, defined in \eqref{eq:complex-null-planes-3D}, i.e. $\cN^{\CC}$ in the Riemannian signature and a connected component of  $\cN^\CC\backslash\cN^\RR$ in the Lorentzian signature. The two definitions are equivalent since there is a bijection between  (time-like) planes equipped with their canonically induced almost complex structure and projective classes of complex null 1-forms  $[\omega_z]:=[z_0\omega^0+z_1\omega^1+z_2\omega^2]$ in  \eqref{eq:complex-null-planes-3D}. To see this, note that given a positive (time-like) plane $p\in\mathrm{Gr}^+(2,T_xM)$, one can find a coframe $(\omega^0,\omega^1,\omega^2)$ that satisfies \eqref{eq:conformal-metric} and $p=\Ker\{\omega^0\}.$ The induced $\mathrm{SO}(2,\RR)$ action on $(\omega^1,\omega^2)^t$ defines an almost complex structure whose dual $\ri$-eigenspace is $\omega^1+\ri\omega^2,$ which is a null complex 1-form. Conversely, given a projective class of a complex null 1-form, $[\omega_z]:=[z_0\omega^0+z_1\omega^1+z_2\omega^2],$ $z_0^2+\ve z_1^2+\ve z_2^2=0,$ one can find $[A^i_j]\in \mathrm{SO}^o(p,q)$ such that $[\omega_z]=[\tilde\omega^1+\ri \tilde\omega^2]$ where $\tilde\omega^i=A^i_j\omega^j.$ More explicitly, examining the case $(p,q)=(3,0)$ over the affine chart  $z_2=\ri,$ one has $[\omega_z]=[z_0\omega^0+z_1\omega^1+\ri\omega^2]$ and $z_0^2+z_1^2=1.$ Using the 2-to-1 isogeny  $\mathrm{SU}(2)\to \mathrm{SO}(3,\RR),$ the coframe transformation $\omega^i\to\tilde\omega^i$  is 
\[
  \begin{pmatrix}   
         \ri\tilde\omega^1 &  \ri\tilde\omega^2+\tilde\omega^0\\
         \ri\tilde\omega^2- \tilde\omega^0 & -\ri\tilde\omega^1 \\      
       \end{pmatrix}  =
       \begin{pmatrix}   
          a & -\overline b  \\
          b& \overline a \\      
    \end{pmatrix}  
       \begin{pmatrix}   
         \ri\omega^1 &  \ri\omega^2+\omega^0\\
         \ri\omega^2- \omega^0 & -\ri\omega^1 \\      
       \end{pmatrix}
              \begin{pmatrix}   
          \overline a & \overline b  \\
          -b& a \\      
    \end{pmatrix},  
\]
where $|a|^2+|b|^2=1.$ Finding $a,b\in\CC$ such that $[\tilde\omega^1+\ri\tilde\omega^2]=[z_0\omega^0+z_1\omega^1+\ri\omega^2]$ results in a set of  polynomial relations  that can be easily solved for any $z_0,z_1\in\CC$ satisfying $z_0^2+z_1^2=1.$ This shows that $\omega_z$ gives the dual $\ri$-eigenspace for the almost complex structure on $\Ker\{\tilde\omega^0\}.$

Now we state the main result of this section. %
\begin{proposition}\label{prop:algebr-type-CR-twist}
  The twistor bundle, $\cT,$ of a conformal 3-manifold $(M,[g])$ is an indefinite nondegenerate CR 5-manifold whose fundamental binary quartic  is zero when the conformal structure is flat or has a repeated root of multiplicity four in which case the corresponding principal null plane of multiplicity four  is the vertical distribution in the fibration $\cT\to M$. 
\end{proposition}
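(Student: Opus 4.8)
The plan is to carry out all computations on the conformal Cartan bundle $\mu\colon\cC\to M$ of \eqref{eq:conf-cartan-conn}. By \eqref{eq:Tg-bundle}, $\hat\mu\colon\cC\to\cT$ is a principal $S$-bundle with $S=\mathrm{CO}(2,\RR)\ltimes\RR^{3}$, and the forms $\omega^{0},\omega^{1},\omega^{2},\theta^{1},\theta^{2}$ are exactly the $\hat\mu$-semibasic ones, since they annihilate the fundamental vector fields of $\fs=\langle\partial_{\theta^{0}},\partial_{\theta^{3}},\partial_{\xi_{0}},\partial_{\xi_{1}},\partial_{\xi_{2}}\rangle$. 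I would first observe that, as $S$ is by construction the stabilizer of an oriented time-like $2$-plane together with its conformal complex structure, the corank-one distribution $\scC=\Ker\{\omega^{0}\}$ together with the complex structure on $\scC$ whose $(1,0)$-coframe is spanned modulo $\omega^{0}$ by $\zeta^{1}:=\omega^{1}+\ri\omega^{2}$ and $\zeta^{2}:=\theta^{1}+\ri\theta^{2}$ are $S$-invariant, hence descend to $\cT$ and endow it with the twistor CR structure of \cite{LeBrun-CR,MasonPhD}; under $\hat\mu$ the forms $\zeta^{2},\zeta^{1},\omega^{0}$ play the roles of $\alpha,\beta,\alpha^{0}$ of \eqref{eq:5DCR-Cartan-conn}.

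To establish the first assertion I would expand $\exd\psi=-\psi\w\psi+\Psi$ with $\Psi$ as in \eqref{eq:Cartan-curvature-conformal-conn}. Since the Cartan curvature enters only through $\Xi_{0},\Xi_{1},\Xi_{2}$, it affects neither $\exd\omega^{i}$ nor $\exd\theta^{1},\exd\theta^{2}$ modulo $\{\omega^{0}\}$, and one reads off, all modulo $\{\omega^{0}\}$, the structure equations $\exd\omega^{0}\equiv\half(\zeta^{1}\w\bar\zeta^{2}+\bar\zeta^{1}\w\zeta^{2})$, $\exd\zeta^{1}\equiv\zeta^{1}\w(\theta^{0}-\ri\theta^{3})$ and $\exd\zeta^{2}\equiv-\ri\,\zeta^{2}\w\theta^{3}-\ve\,\xi_{0}\w\zeta^{1}$. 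The first exhibits the Levi form as the Hermitian form with matrix $\left(\begin{smallmatrix}0&1\\1&0\end{smallmatrix}\right)$ in the basis $\{\zeta^{1},\zeta^{2}\}$ — nondegenerate of signature $(1,1)$ — so $\cT$ is an indefinite nondegenerate CR $5$-manifold; the last two show that $\exd\zeta^{1},\exd\zeta^{2}$, and hence also their conjugates $\exd\bar\zeta^{1},\exd\bar\zeta^{2}$, lie in the ideals $\{\omega^{0},\zeta^{1},\zeta^{2}\}$ resp.\ $\{\omega^{0},\bar\zeta^{1},\bar\zeta^{2}\}$, so $\scH$ and $\cscH$ are integrable and the almost CR structure is genuinely a CR structure.

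For the binary quartic I would first locate the vertical distribution $V=\Ker\widetilde\mu_{*}$ among the $\alpha$-planes of \eqref{eq:alpha-beta-planes}. Under the dictionary $\alpha^{i}\leftrightarrow\theta^{i}$, $\beta^{i}\leftrightarrow\omega^{i}$ — consistent with $\scH^{*}=\langle\zeta^{2},\zeta^{1}\rangle$ and with $\rho\equiv-\exd\omega^{0}$ modulo $\{\omega^{0}\}$ — the distribution $V$ pulls back to $\Ker\{\omega^{0},\omega^{1},\omega^{2}\}$, and restricting the conformal symplectic form $\rho=\theta^{1}\w\omega^{1}+\theta^{2}\w\omega^{2}$ and the subconformal metric $g=\theta^{1}\omega^{2}-\theta^{2}\omega^{1}$ of \eqref{eq:sympl-subconf} to it gives $\rho|_{V}=0$ and $g|_{V}=0$. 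Hence $V$ is a null Lagrangian $2$-plane, namely the $\alpha$-plane with $[a_{1}:a_{2}]=[1:0]$. Since $V$ is the vertical distribution of the submersion $\widetilde\mu\colon\cT\to M$ it is integrable, so Proposition \ref{prop:quartic-integrability} applies and $V$ is a principal null plane: in the notation of \eqref{eq:quartic} the quartic $Q_{a}$ vanishes at $[1:0]$, i.e.\ $C_{4}\equiv0$ in the coframing inherited from \eqref{eq:conf-cartan-conn}.

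It remains to upgrade this root to a root of multiplicity four. The plan is to continue the normalization of Theorem \ref{thm:CR-equiv-problem} starting from the structure equations above and to propagate the vanishing of $C_{4}$: the sole source feeding the CR curvature \eqref{eq:W-CR-quartic} is the conformal Cartan curvature $\Psi$, which is just the Cotton–York tensor \eqref{eq:cotton-york} sitting in a single irreducible block of \eqref{eq:Cartan-curvature-conformal-conn} coupled only to $\omega^{0}$; differentiating $C_{4}\equiv0$ and using $\exd^{2}=0$ with the Bianchi identity — exactly as in the proof of Proposition \ref{prop:quartic-integrability}, but carried to higher order — should force $C_{3}\equiv C_{2}\equiv C_{1}\equiv0$, while $C_{0}$ becomes, up to a nonvanishing conformal weight, the Cotton–York tensor of $[g]$ re-expressed in the fibre variables of $\widetilde\mu$. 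Then $Q_{a}=\lambda\,C_{0}\,a_{2}^{4}$, which vanishes identically precisely when the Cotton–York tensor does, i.e.\ when $[g]$ is flat, and otherwise has $[1:0]$ — that is, $V$ — as a root of multiplicity four, matching the ``type $\sN$'' behaviour noted in Remark \ref{rmk:petrov-type-twistor}. The main obstacle is exactly this last step: Proposition \ref{prop:quartic-integrability} yields only a simple root, and pinning down the full multiplicity requires either the routine-but-lengthy Cartan bookkeeping that propagates $C_{4}\equiv0$ to $C_{1},C_{2},C_{3}$, or the equivalent conceptual input that the fibres $\cT_{x}$ form a covering family of compact holomorphic curves in $\cT$, all tangent to $V$, which over-constrains the integrability of $V$ and forces its root to have maximal order.
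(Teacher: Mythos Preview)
Your setup and the first half of the argument are fine: the identification of the CR structure via the coframe $(\omega^{0},\zeta^{1},\zeta^{2})$, the check of integrability and of the Levi form, and the observation that the vertical distribution $V$ is a null Lagrangian $\alpha$-plane are all correct. Using Proposition~\ref{prop:quartic-integrability} to get one real root of the quartic at $V$ is also legitimate.

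The genuine gap is exactly where you say it is, and neither of your two suggested remedies closes it. The Bianchi-propagation idea does not work as stated: the identities that mix the coefficients $C_{0},\dots,C_{4}$ are differential relations along the full fiber of the CR Cartan bundle $\cQ\to\cT$, whereas you only control derivatives along the smaller sub-bundle $\iota(\cC)\subset\cQ$. Knowing that one coefficient vanishes identically on $\iota(\cC)$ therefore does not force the adjacent ones to vanish without further input. The holomorphic-curve heuristic is likewise inconclusive (and the fibers are not compact in the Lorentzian case), so it does not substitute for a computation.

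What the paper does instead is bypass this entirely by writing down the explicit Lie algebra embedding $\mathfrak{so}(p+1,q+1)\hookrightarrow\mathfrak{su}(2,2)$; this gives $\iota^{*}\gamma$ directly in terms of the conformal Cartan connection entries, with $(\alpha,\beta)\leftrightarrow(\omega,\theta)$ (note: the opposite of your dictionary). One can then read off the quartic coefficients from the structure equations without any inductive argument: in the paper's convention $C_{0}=C_{1}=C_{2}=C_{3}=0$ and $C_{4}=\ve\,Y_{012}$, a single Cotton--York component whose vanishing on all of $\cC$ is equivalent to conformal flatness. This is the missing step you need; once you have the explicit embedding, the multiplicity-four claim and the identification of the surviving coefficient with Cotton--York are immediate.
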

\begin{proof}
To show that $\cT$ carries a CR structure,  we  follow  \cite{SY-LieContact1}, which is different from \cite{LeBrun-CR} and is based on exploiting the embedding of Lie algebras $\mathfrak{so}(p+1,q+1)\hookrightarrow \mathfrak{su}(2,2),$ where $p+q=3,$ given explicitly in \eqref{eq:sopq-su22}.  Firstly, using the description   $\cT=\cC\slash S$ in \eqref{eq:Tg-bundle} together with the Cartan connection \eqref{eq:conf-cartan-conn},  one obtains that the 1-forms $(\omega^0,\omega^1,\omega^2,\theta^1,\theta^2)$ are semi-basic with respect to the fibration $\cC\to \cT.$  Furthermore, by the structure  equations  \eqref{eq:Cartan-curvature-conformal-conn}, one knows
  \[\exd\omega^0\equiv \omega^1\w\theta^1+\omega^2\w\theta^2\mod \{\omega^0\}.\]
It is a straightforward task to check that the complex Pfaffian system $\{\omega^0,\omega^1+\ri\omega^2,\theta^1+\ri\theta^2\}$ is  integrable. As a result, the contact distribution $\scC=\Ker\{\omega^0\}\subset T\cT$ is equipped with an integrable almost complex structure. This almost complex structure satisfies the compatibility condition with the Levi bracket, expressed in \eqref{JJJ}, because of the relation
  \[\exd\omega^0\equiv \half(\omega\w\ctheta+\comega\w\theta),\mod\{\omega^0\},\]
  where $ \omega=\omega^1+\ri\omega^2$ and $\theta=\theta^1+\ri\theta^2.$ The conformal class of  Hermitian inner products  $[h]\subset \mathrm{Sym}^2(\scC^*)$ is given by $h=\tfrac{\ri}{2}(\omega\ctheta-\comega\theta).$ Using an adapted coframe $(\omega^0,\omega,\theta,\comega,\ctheta),$ one can find the Cartan connection for the induced nondegenerate CR structure on $\cT.$ Namely, the Lie algebra embedding $\mathfrak{so}(p+1,q+1)\hookrightarrow \mathfrak{su}(2,2)$  results in the injection $\iota\colon S\to P,$ where $P\in \mathrm{SU}(2,2)$ is the parabolic subgroup in Theorem \ref{thm:CR-equiv-problem}  and $S$ is given in \eqref{eq:Tg-bundle}. Defining $\cQ=\cC\times_{S}P,$ one obtains a natural inclusion $\iota\colon\cC\to\cQ$ between the Cartan geometries $(\cC\to M,\psi)$ and $(\cQ\to\cT,\gamma)$ via which
  \begin{equation}\label{eq:sopq-su22}
      \def\arraystretch{1.3}
\iota^*\gamma=  \begin{pmatrix} 
         -\half\cphi &  -\tfrac 14\ve\ctheta & \half\ri\cxi  &    \tfrac 14\ri\xi^0\\
    \theta  & -\half\phi & \ve\ri\xi^0  & -\half\ri\xi \\
    -\ri\omega &  -\half\ve\ri\omega^0 &  \half\cphi  & -\tfrac 14\ve\theta  \\
    -2\ri\omega^0&  \ri\comega &  \ctheta  &  \half\phi
  \end{pmatrix},
    \end{equation}
where the 1-forms
\begin{equation}\label{eq:holom-1-forms-CR-cT}
  \omega=\omega^1+\ri\omega^2,\quad \xi=\xi^1+\ri\xi^2,\quad \theta=\theta^1+\ri\theta^2,\quad \phi=\theta^0+\ri\theta^3,
  \end{equation}
are defined in terms of the entries of the Cartan connection \eqref{eq:conf-cartan-conn} for the 3-dimensional conformal structure $[g]$.
 Using $\iota^*\gamma$ and the structure equations \eqref{eq:Cartan-curvature-conformal-conn} and \eqref{eq:5DCR-Cartan-conn},  the coefficients $C_0,\cdots,C_4$  of the holomorphic binary quartic \eqref{eq:W-CR-quartic} on $\cT,$  pulled-back via $\iota,$  are found to be
 \begin{equation}\label{eq:Cis-quartic}
   C_0=C_1=C_2=C_3=0,\quad C_4=\ve Y_{012}.
    \end{equation}
The Cotton-York entry $Y_{012}$ is identically zero everywhere on $\cC$ if and only if the conformal structure is flat. As a result, the CR structure on $\cT$ is flat if and only if the conformal structure $[g]$ is flat. Otherwise, from \eqref{eq:Cis-quartic}, it follows that the binary quartic \eqref{eq:W-CR-quartic} has a repeated root of multiplicity four. Using the parametrization of $\alpha$-planes \eqref{eq:alpha-beta-planes} and the quartic \eqref{eq:quartic}, it follows that the repeated root corresponds to the $\alpha$-plane for which $[a_1:a_2]=[0:1],$ i.e. the contact planes $\Ker\{\omega^0,\omega^1,\omega^2\},$ whose integral  surfaces are the fibers of $\cT\to M.$ 
\end{proof}

As was mentioned before Proposition \ref{prop:algebr-type-CR-twist}, in \cite{LeBrun-CR, MasonPhD} the induced CR structure on $\cT$ is obtained by identifying it with $\cN^{\CC}\backslash\cN^\RR.$   The 1-form $\omega^1+\ri\omega^2$ on $\cT$ is  the horizontal lift of the dual  $\ri$-eigenspace for the almost complex structure on  positively oriented (future time-like) planes using any Weyl connection $\nabla^{[g]}$. The 1-form $\theta^1+\ri\theta^2$ is the canonical holomorphic 1-form on $\mathrm{SO}^o(p,q)\slash\mathrm{SO}(2,\RR)$ that is invariant under the action of $\mathrm{SO}(2,\RR).$ It is straightforward to show that the resulting  complex structure on the contact distribution is independent of the choice of  $\nabla^{[g]}.$

The twistor CR structures were also discovered independently in \cite{SY-LieContact1} as the 3-dimensional case of geometric structures referred to as \emph{Lie contact structures} which give rise to an analogue of the Fefferman construction for conformal structures in all dimensions.  We refer the reader to \cite[Section 4.5.3]{CS-Parabolic} for an overview of this construction.

\begin{remark}\label{rmk:petrov-type-twistor}
As was mentioned before, in terms of the adapted coframe  \eqref{eq:holom-1-forms-CR-cT} on $\cT,$ the value  $\lambda:=a_1/a_2=0$ in the parametrization  \eqref{eq:alpha-beta-planes},  corresponds to the fibers of $\cT\to M.$   Thus, by Proposition \ref{prop:quartic-integrability} one knows a priori that  the pull-back of the binary quartic of $\cT$ to $\cC$  vanishes at $\lambda=0.$ The fact that  $\cT$  has a repeated root of multiplicity at least four is an analogue of the fact that the Weyl curvature of 4-dimensional Lorentzian Fefferman conformal structures, defined for any CR 3-manifold, has \textrm{Petrov type $\sN$} \cite{Lewandowski}.  Similarly, it turns out the twistor bundle defined in Remark \ref{rmk:twistor-paracr-manifold}  carries a nondegenerate \emph{para-CR structure} whose fundamental binary quartic satisfies the same properties as in Proposition \ref{prop:algebr-type-CR-twist}. 
\end{remark}
\section{Projective structures and 2-nondegenerate CR geometry}
\label{sec:from-3d-projective}
This section gives a quick review of (oriented) projective structures, their Cartan geometric description, and the corresponding  almost para-CR structure on their (oriented) projectivized cotangent bundle. In \ref{sec:twistor-bundle-2} we describe a twistor bundle for 3-dimensional projective structures and its canonical 2-nondegenerate CR structure of hypersurface type. 
 
\subsection{Projective structures in dimension three}\label{sec:proj-struct-dimens}
A projective structure $[\nabla]$ on a  manifold $M$ is given by an equivalence class of torsion-free connections on $M$ whose geodesics coincide as unparametrized curves. If $M$ is equipped with an orientation, then $[\nabla]$ is called an oriented projective structure.   Let $\gamma(t)=(x^i(t))\subset M$ be a geodesic for $[\nabla].$ Denoting  its tangent vector field  as $\dot\gamma(t)=( x_t^i(t)),$ if $\nabla,\tilde\nabla\in [\nabla],$   then there is  a function $v(t)$ along $\gamma(t)$ such that
\begin{equation}\label{eq:proj-equiv}
  (\tilde \nabla_{\dot\gamma(t)}-\nabla_{\dot\gamma(t)})\dot\gamma(t)=v(t)\dot\gamma(t)\Rightarrow (\tilde\Gamma^i_{jk}-\Gamma^i_{jk})x_t^j(t) x_t^k(t)=v(t)x_t^i(t)\Rightarrow \tilde\Gamma^i_{jk}=\Gamma^i_{jk}+\delta^i_{j}f_k+\delta^i_{k}f_j,
  \end{equation}
for  some functions $(f_1,\cdots,f_n)$ on $M$ where $\Gamma^{i}_{jk}$ and $\tilde\Gamma^i_{jk}$ are the Christoffel symbols of $\nabla$ and $\tilde\nabla,$ respectively.  
    
Solving the equivalence problem of projective structures goes back to Cartan \cite{Cartan-proj}. The solution is given as Cartan geometries of type $(\mathrm{SL}(n+1,\RR),P_1),$ where $P_1=\mathrm{GL}(n,\RR)\ltimes\RR^n\subset \mathrm{SL}(n+1,\RR)$ is the parabolic subgroup that stabilizes a line in $\RR^{n+1}.$ In the case of oriented projective structures $P_1=\mathrm{GL}^+(n,\RR)\ltimes\RR^n$ is the stabilizer of a ray. We are interested in  3-dimensional projective structures, which correspond to regular and normal Cartan geometries  $(\pi\colon\cP\to M,\varphi)$ of type $(\mathrm{SL}(4,\RR),P_1).$ The Cartan connection and Cartan curvature, which can be found in \cite{KN-projective},  are given by
\begin{equation}\label{eq:3Dproj-CartanCurv}
  \def\arraystretch{1.2}
\varphi=  \begin{pmatrix}
    -\eta^i_i & \mu_2 & \mu_1 &\mu_0\\
    \tau^2 & \eta^2_2 & \eta^2_1 & \eta^2_0\\
    \tau^1 & \eta^1_2 &  \eta^1_1 & \eta^1_0\\
    \tau^0 & \eta^0_2 &  \eta^0_1 & \eta^0_0
  \end{pmatrix} ,\quad    %
  \bvarphi=\exd\varphi+\varphi\w\varphi=
   \begin{pmatrix}
     0 & \bmu_2 & \bmu_1 & \bmu_0 \\
     0 & \beeta^2_2 & \beeta^2_1 & \beeta^2_0\\
     0 & \beeta^1_2 &  \beeta^1_1 & \beeta^1_0\\
     0 & \beeta^0_2 &  \beeta^0_1 & \beeta^0_0
   \end{pmatrix},
\end{equation}
where
\begin{equation}\label{eq:proj-weyl}
  \beeta^i_j=\half W^i_{jkl}\tau^k\w\tau^l,\quad \bmu_j=\half W_{jkl}\tau^k\w\tau^l,\quad W^i_{jik}=0,\quad W^i_{[jkl]}=0,\quad W_{[ijk]}=0.
  \end{equation}
The structure functions $W^i_{jkl}$ on $\cP$ are the entries of the  \emph{projective Weyl curvature} whose vanishing imply flatness, i.e. the projective structure is locally equivalent to the one defined by projective  lines in $\PP^3.$

The 1-forms $\tau^0,\tau^1,\tau^2$ are semi-basic with respect to the fibration $\cP\to M.$  Furthermore, the leaves of the integrable Pfaffian system $\cH=\{\tau^0,\tau^1,\eta^0_2,\eta^1_2\}$ on $\cP$ project to the geodesics of $[\nabla]$ on $M,$ which in the model space $\PP^3$ can be viewed as follows: if $\ell\subset\PP^3$  is the the projectivization of the 2-plane $\langle e_3,e_2\rangle\subset\RR^4,$ where $(e_3,e_2,e_1,e_0)$ is  the standard basis for $\RR^4,$ the Lie algebra of the stabilizer of $\ell$ is given by $\mathfrak{sl}(4,\RR)$-valued matrices for which the bottom left $2\times 2$ block is zero, i.e. the stabilizer is  the parabolic subalgebra $\mathfrak{p}_2\subset\mathfrak{sl}(4,\RR)$ defined by crossing the second node in the Dynkin diagram of $A_3.$ In terms of the Cartan connection \eqref{eq:3Dproj-CartanCurv}, the stabilizer of $\ell$ corresponds to the Pfaffian system $\cH.$
In the curved case, this can be seen as follows: the 5-dimensional leaf space of the Pfaffian system $\cK=\{\tau^0,\tau^1,\tau^2,\eta^0_2,\eta^1_2\}$ can be identified with $\tilde\pi\colon\PP TM\to M,$ since the rank 2 vertical distribution  $\scU=\Ker\{\tau^0,\tau^1,\tau^2\}$ together with the tautological rank 3 distribution $\scT=\Ker\{\tau^0,\tau^1\}$ satisfy $[\scU,\scT]= T\PP TM.$  Recall that the rank 3 tautological distribution  at $p=(x,[v])\in\PP TM,$ where $x\in M$ and $[v]\subset T_xM$ is a line, is defined as $\scT=\tilde\pi_*^{-1}([v])\subset T_p\PP T M.$  Thus, the distributions $(\scU,\scT)$ define the canonical \emph{multi-contact structure} on $\PP TM,$ i.e. the leaf space of $\cK$ is locally equivalent to $\PP TM.$
If $\tilde\scE\subset TTM$ denotes the rank 3 horizontal distribution defined by $\nabla\subset[\nabla],$ then $\ell:=\scT\cap\tilde\pi_*(\tilde\scE)\subset T\PP TM$ is the line field $\ell=\Ker\cH=\Ker\{\tau^0,\tau^1,\eta^0_2,\eta^1_2\},$ referred to as the \emph{geodesic spray} of $\nabla,$  which is invariant of the choice of connection $\nabla,$ and, via projection to $M,$ its integral curves exhaust the geodesics of $[\nabla]$ in a one-to-one fashion.

The geodesic equation   for any representative connection  $\nabla\in[\nabla]$ gives a triple of second order ODEs $x^i_{tt}=-\Gamma^i_{jk}x^j_tx^k_t$,  where the independent parameter $t$ is defined up to homotheties and translations. Considering geodesics as unparametrized curves, a local coordinate on $M$ can be taken as the independent parameter, which allows one to arrive at a pair of second order ODEs, defined up to \emph{point transformations}, as follows. 
Working locally with an open set of geodesics along which $ x^0_t\neq 0,$ one can set $t=t(x_0)$ and $x^i=x^i(x^0)$ for $i=1,2,$  along such geodesics.   Defining    $\dot x^i=\tfrac{\exd x^i}{\exd x^0}=\tfrac{x^i_t}{x^0_t}$ and $\ddot x^i=\tfrac{\exd^2 x^i}{(\exd x^0)^2}=\tfrac{1}{x^0_t}\tfrac{\exd}{\exd t}(\tfrac{x^i_t}{x^0_t})=\tfrac{x^i_{tt}}{(x^0_t)^2}-\tfrac{x^i_tx^0_{tt}}{(x^0_t)^3},$   the geodesic equation reduces to the pair of 2nd order ODEs
\begin{equation}\label{eq:pair-2nd-ODEs}
   \begin{aligned}
\ddot{x}^1=
&\,\, \Gamma^0_{11}(\dot{x}^1)^3+2\Gamma^0_{12}(\dot{x}^1)^2\dot{x}^2+\Gamma^0_{22}\dot{x}^1(\dot{x}^2)^2
	+(2\Gamma^0_{01}-\Gamma^1_{11})(\dot{x}^1)^2\\
& +2(\Gamma^0_{02}-\Gamma^1_{12})\dot{x}^1\dot{x}^2-\Gamma^1_{22}(\dot{x}^2)^2
	+(\Gamma^0_{00}-2\Gamma^1_{01})\dot{x}^1-2\Gamma^1_{02}\dot{x}^2-\Gamma^1_{00},\\
\ddot{x}^2=
&\,\, \Gamma^0_{11}(\dot{x}^1)^2\dot{x}^2+2\Gamma^0_{12}\dot{x}^1(\dot{x}^2)^2+\Gamma^0_{22}(\dot{x}^2)^3
	-\Gamma^2_{11}(\dot{x}^1)^2+2(\Gamma^0_{01}-\Gamma^2_{12})\dot{x}^1\dot{x}^2\\
& +(2\Gamma^0_{02}-\Gamma^2_{22})(\dot{x}^2)^2-2\Gamma^2_{01}\dot{x}^1
	+(\Gamma^0_{00}-2\Gamma^2_{02})\dot{x}^2-\Gamma^2_{00}.
\end{aligned}
\end{equation}
The pair of second order ODEs above are defined up to local diffeomorphisms or, equivalently, up to the change of variables $x^i\mapsto \tilde x^i= X^i(x^0,x^1,x^2)$ for $i=0,1,2,$ i.e. the pseudogroup of point transformations. Note that the 15 coefficients of such pairs of ODEs encode the so-called \emph{Thomas symbols} of the projective structure, defined as $\Pi^i_{jk}=\Gamma^i_{jk}-\tfrac14(\Gamma^l_{lk}\delta^i_j+\Gamma^l_{lj}\delta^i_k).$ More generally, point equivalence classes of pairs of ODEs $\ddot{x}^i=F^i(x^0,x^1,x^2,\dot{x}^1,\dot x^2)$ satisfying $F^i_{jkl}-\tfrac{3}{4}F^r_{r(jk}\delta^i_{l)}=0,$ where $F^i_{jkl}=\partial_{p^j}\partial_{p^k}\partial_{p^l}F^i(x^0,x^1,x^2,p^1,p^2),$ are in one-to-one correspondence with projective structures and can always be expressed as \eqref{eq:pair-2nd-ODEs}.

\subsection{Almost para-CR structure on the correspondence space}
\label{sec:corr-space-5d}
Now we  review a well-known correspondence space construction for (oriented) projective structures,  restricting ourselves to dimension 3.  This construction was first described in \cite{takeuchi}; see also \cite{Cap-corr}.

Firstly, recall from Definition \ref{def:cr-para-cr} and Remark \ref{rmk:cr-geom-remarks} that an almost para-CR structure is defined as a splitting of the contact distribution $\scF=\scV\oplus\scE$ where $\scV$ and $\scE$ are Lagrangian with respect  to the induced symplectic structure on the contact distribution.
\begin{proposition}
  Given a projective structure $[\nabla]$ on a 3-manifold $M,$ the  projectivized cotangent bundle, $\PP T^*M,$  is equipped with an  almost para-CR structure  $\scF=\scV\oplus\scE\subset T\PP T^*M$ where $\scF$ is the canonical contact distribution on $\PP T^*M,$ $\scV$ is the vertical distribution for the projection $\PP T^*M\to M$ and $\scE$ is obtained from the horizontal distributions defined by connections in $[\nabla].$
\end{proposition}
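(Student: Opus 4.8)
The plan is to make the almost para-CR structure on $\PP T^*M$ completely explicit using the Cartan bundle $\pi\colon\cP\to M$ of the oriented projective structure, exactly as in the conformal case treated in Proposition \ref{prop:algebr-type-CR-twist}. First I would recall that $\PP T^*M$ (or its oriented double cover $\PP_+T^*M$) is the correspondence space $\cP\slash P'$ for the subgroup $P'\subset P_1$ stabilizing a line-plus-hyperplane flag in $\RR^4$; concretely $P'$ is the intersection of $P_1$ with the parabolic stabilizing a hyperplane through the distinguished line, so that $\widehat\pi\colon\cP\to\PP T^*M$ is a principal $P'$-bundle. Reading off the Cartan connection \eqref{eq:3Dproj-CartanCurv}, the 1-forms that are semibasic for $\cP\to\PP T^*M$ are $\tau^0,\tau^1,\tau^2$ together with two of the $\eta^i_j$'s — say $\eta^1_0,\eta^2_0$ — which span the vertical directions of $\PP T^*M\to M$ modulo the fiber of $\widehat\pi$. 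The contact distribution is then $\scF=\Ker\{\tau^0\}$ at the level of $\PP T^*M$ (after descent), with $\scV=\Ker\{\tau^0,\tau^1,\tau^2\}$ the vertical distribution for $\PP T^*M\to M$ and $\scE=\Ker\{\tau^0,\eta^1_0,\eta^2_0\}$ a complement determined by the choice of a representative connection (equivalently a Weyl-type splitting of $\cP\to\cP\slash P_0$).

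Next I would verify the three structural claims. (1) \emph{$\scF$ is contact:} from the structure equations $\exd\varphi+\varphi\w\varphi=\bvarphi$ in \eqref{eq:3Dproj-CartanCurv}, together with \eqref{eq:proj-weyl}, one computes $\exd\tau^0\equiv \tau^1\w\eta^1_0+\tau^2\w\eta^2_0 \pmod{\{\tau^0\}}$, which is nondegenerate on $\scF$, so $\scF$ is a genuine contact distribution and $\scV,\scE$ are each Lagrangian for the induced conformal symplectic form $\rho=\tau^1\w\eta^1_0+\tau^2\w\eta^2_0$ on $\scF$. (2) \emph{$\scV$ is the vertical of $\PP T^*M\to M$:} this is immediate from the definition of $\cP\slash P_0$ and the fact that $\tau^0,\tau^1,\tau^2$ are exactly the pullbacks of a coframe on $M$. (3) \emph{$\scE$ comes from connections in $[\nabla]$:} here I would recall the classical fact (see \cite{takeuchi,Cap-corr}) that a choice of representative $\nabla\in[\nabla]$ — equivalently a reduction of $\cP$ to the subbundle cut out by $\mu_i=0$, i.e. a Weyl structure — determines a horizontal lift of $TM$ to $T\PP T^*M$ along the tautological section, and the image of this lift inside $\scF$ is precisely $\Ker\{\tau^0,\eta^1_0,\eta^2_0\}$; changing $\nabla$ within $[\nabla]$ by \eqref{eq:proj-equiv} changes this splitting but not the pair $(\scF,\scV)$, and the space of such splittings matches the space of Weyl structures, which is an affine space over $\Omega^1(M)$. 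Finally, the eigenspace/para-complex structure $\cJ_{+1}$ is the one with $+1$-eigenspace $\scV$ and $-1$-eigenspace $\scE$; the compatibility condition in \eqref{JJJ} for $\delta=1$ reduces to $\rho$ vanishing on $\scV\times\scV$ and on $\scE\times\scE$, which is what Lagrangianity gives, and the trace condition is automatic since $\operatorname{rk}\scV=\operatorname{rk}\scE=2$.

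The main obstacle, and the step deserving the most care, is item (3): pinning down canonically the distribution $\scE$ from the data of a representative connection and checking invariance of the pair $(\scF,\scV)$ — and of the para-CR \emph{structure} as opposed to its individual splittings — under the full group $P'$ rather than just $P_0$. Concretely one must check that although the $\tau^i$ are only defined up to the $\mathrm{GL}^+(3,\RR)$-action and the $\eta^i_0$ only up to adding combinations of $\tau^j$ (the $\fg_1$-action), the distributions $\Ker\{\tau^0\}$, $\Ker\{\tau^0,\tau^1,\tau^2\}$ descend to well-defined objects on $\PP T^*M$, and that the almost para-complex structure $\cJ_{+1}$ they define is $P'$-invariant hence descends. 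This is a routine but slightly delicate bookkeeping of how the nilradical $\fg_1$ acts; once it is done, the statement follows, and I would remark (as the excerpt anticipates in \ref{sec:twistor-bundle-2}) that a further prolongation upgrades this \emph{almost} para-CR structure to a genuine $2$-nondegenerate CR structure on the relevant open subset, but that is beyond the present proposition.
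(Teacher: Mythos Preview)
Your overall strategy --- realize $\PP T^*M$ as the correspondence space $\cP/P'$, read off the semi-basic $1$-forms, and identify $\scF,\scV,\scE$ via the Cartan connection --- is exactly what the paper does in \ref{sec:corr-space-5d}. But you have the indices on the extra $1$-forms reversed, and this is not a harmless slip.

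From the matrix form \eqref{eq:3Dproj-CartanCurv}, the entries $\eta^1_0,\eta^2_0$ sit in the rightmost column (positions $(3,4)$ and $(2,4)$), hence in $\fp_+$; the paper relabels them as $\mu_3,\mu_4$ in \eqref{eq:para-CR-CartanCurv}. The $1$-forms that are actually semi-basic for $\cP\to\PP T^*M$ are $\eta^0_1,\eta^0_2$ (bottom row, positions $(4,3)$ and $(4,2)$), which the paper renames $\tau^3,\tau^4$. Accordingly $\scE=\what\pi_*\Ker\{\tau^0,\eta^0_1,\eta^0_2\}$, and the structure equation reads
\[
\exd\tau^0\equiv \tau^1\w\eta^0_1+\tau^2\w\eta^0_2\quad \mathrm{mod}\ \{\tau^0\},
\]
not $\tau^1\w\eta^1_0+\tau^2\w\eta^2_0$. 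With your choice the forms do not descend to $\PP T^*M$, so your $\scE$ is not even defined there. Once you swap the indices, your argument goes through verbatim.

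One further remark: for item (3) the paper gives a direct geometric argument rather than the $P'$-invariance bookkeeping you outline. It observes that for $\nabla,\tilde\nabla\in[\nabla]$ the induced horizontal lifts to $T^*M$ differ by $\tilde\nabla_vp-\nabla_vp=-\Upsilon(v)p-p(v)\Upsilon$, so after projectivizing and intersecting with $\Ker(p)$ the discrepancy vanishes; hence $\scE_{(x;[p])}=\what\scE_{(x;[p])}\cap\Ker(p)$ depends only on $[\nabla]$. This is cleaner than tracking the $\fg_1$-action and makes the projective invariance of $\scE$ transparent without any principal-bundle computation.
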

The same result holds for an oriented projective structure on the oriented projectivized cotangent bundle $\PP_+T^*M\to M.$ To describe the distribution $\scE\subset T\PP T^*M,$ note that each representative $\nabla\in [\nabla]$ defines a connection on $T^*M$ whose rank 3 horizontal distribution is denoted by $\what\scE\subset T T^*M$.  Descending to $\PP T^*M,$ one obtains a splitting $T\PP T^*M=\check\scE\oplus \scV,$ where  $\scV$ is the vertical distribution of $\PP T^*M\to M$ and the rank 3 distribution  $\check\scE$ is the push-forward of  $\what\scE.$ By the projective equivalence relation  \eqref{eq:proj-equiv} for two connections $\tilde\nabla,\nabla\in[\nabla],$ at each point    $(x;[p])\in\PP T^*M$ one has $\tilde\nabla_{v}p=\nabla_{v}p-\Upsilon(v)p-p(v)\Upsilon,$ for some  $\Upsilon\in T^*M.$ Thus, the rank 2 distribution $\scE\subset T\PP T^*M$ defined pointwise as $\scE_{(x;[p])}=\check\scE_{(x;[p])}\cap\Ker(p)$ is invariantly defined for the projective structure $[\nabla].$ Furthermore,   $\scV$ and $\scE$ are  Lagrangian with respect to the conformal symplectic structure on $\scF.$

Recall that $(2n-1)$-dimensional para-CR geometries are modeled by the flag variety of pairs of incident lines $\ell$ and hyperplanes $H$, i.e. 
\begin{equation}\label{eq:flat-path-geom}
 G\slash P_{1,n}  \cong \FF_{1,n}(\RR^{n+1}):=\{(\ell,H)\ \vline\ \ell\subset H\subset\RR^{n+1}\}\subset  \PP^{n}\times(\PP^{n})^*,
  \end{equation}
where   $G=\mathrm{SL}(n+1,\RR)$ and $P_{1,n}$ is the parabolic subgroup that stabilizes a chosen flag. The homogeneous space $\FF_{1,n}(\RR^{n+1})$ can be identified with $\PP T^*\PP^{n},$  i.e.  the projectivized cotangent bundle of the projective space $\PP^{n},$ equipped with its canonical contact structure and  Legendrian foliations given by the fibers of the  natural fibrations  $\PP T^*\PP^{n}\to\PP^{n}$ and $\PP T^*\PP^{n}\to(\PP^{n})^*$.

Thus, almost para-CR structures in dimension 5 are Cartan geometries of type $(\mathrm{SL}(4,\RR),P_{1,3})$ and are among the so-called \emph{regular} and \emph{normal} parabolic geometries, where $P_{1,3}=P_0\ltimes P_+$ in which  $P_+$ is the 5-dimensional nilradical and $P_0=\RR^\times\times \mathrm{GL}(2,\RR)\subset \mathrm{GL}(3,\RR).$ In the case of oriented almost para-CR structures that correspond to oriented projective structures, one has $P_0=\RR^+\times \mathrm{GL}^+(2,\RR).$  The harmonic curvature of 5-dimensional almost para-CR structures has  three components, also known as the fundamental invariants, and can be written as \[K_H=T^1+T^2+W.\]
The vanishing of $K_H$ implies local flatness, i.e. local equivalence to the flat model described previously. The vanishing of torsion components $T^1:\biw^2\scE\to\scV$ and $T^2:\biw^2\scV\to\scE$   is equivalent  to the integrability of the rank 2 distributions $\scE$ and $\scV,$ respectively.  Moreover, the vanishing of $W$ is equivalent to the existence of a 4-parameter family of Legendrian surfaces that are null with respect to the indefinite subconformal structure on the contact distribution, e.g. see \cite{KM-subconformal} for a review. An almost para-CR structure of dimension $2n-1$ arises from a projective structure on an $n$-dimensional manifold in the manner described above, if and only if $W=0$ and $T^2=0,$ e.g. see \cite{Cap-corr},

Starting from an oriented projective structure and its  Cartan connection \eqref{eq:3Dproj-CartanCurv}, we can express the $\mathfrak{sl}(4,\RR)$-valued Cartan connection of the induced 5-dimensional almost para-CR structure on $\PP_+ T^*M$ as
\begin{equation}\label{eq:para-CR-CartanCurv}
  \def\arraystretch{1.2}
\varphi=  \begin{pmatrix}
    -\eta^i_i & \mu_2 & \mu_1 &\mu_0\\
    \tau^2 & \eta^2_2 & \eta^2_1 & \mu_4\\
    \tau^1 & \eta^1_2 &  \eta^1_1 & \mu_3\\
    \tau^0 & \tau^4 &  \tau^3 & \eta^0_0
  \end{pmatrix} ,\quad    %
  \bvarphi=\exd\varphi+\varphi\w\varphi=
   \begin{pmatrix}
     0 & \bmu_2 & \bmu_1 & \bmu_0 \\
     0 & \beeta^2_2 & \beeta^2_1 & \bmu_4\\
     0 & \beeta^1_2 &  \beeta^1_1 & \bmu_3\\
     0 & \btau^4 &  \btau^3 & \beeta^0_0
   \end{pmatrix},
\end{equation}
where, comparing with \eqref{eq:3Dproj-CartanCurv}, we have
\[\tau^3=\eta^0_1,\quad  \tau^4=\eta^0_2,\quad \mu^3=\eta^1_0,\quad \mu^4=\eta^2_0.  \]
Using the projection $\what\pi\colon\cP\to\PP_+ T^*M,$ it follows that the  splitting $\scF=\scV\oplus\scE$ is given by
\[\scF=\what\pi_*\Ker\{\tau^0\},\quad  \scE=\what\pi_*\Ker\{\tau^0,\tau^3,\tau^4\},\quad \scV=\what\pi_*\Ker\{\tau^0,\tau^1,\tau^2\},\]  which is clearly Lagrangian since $\exd\tau^0\equiv \tau^1\w\tau^3+\tau^2\w\tau^4,$
mod $\{\tau^0\}.$  Taking any section $s\colon \PP_+ T^*M\to \cP,$ the torsion $T^1\in\Gamma(\scV\otimes \biw^2\scE^*)$ is given by
\begin{equation}\label{eq:torion-T2}
  T^1=(W^0_{112}\partial_{s^*\tau^3}+W^0_{212}\partial_{s^*\tau^4})\otimes(s^*\tau^1\w s^*\tau^2),
  \end{equation}
where,   using  \eqref{eq:proj-weyl} and working modulo $\{\tau^0\}$, the 2-forms entries $\btau^3$ and $\btau^4$ in $\bvarphi$ satisfy
\begin{equation}\label{eq:entries-of-T2}
  \btau^3\equiv W^0_{112}\tau^1\w\tau^2,\quad \btau^4\equiv W^0_{212}\tau^1\w\tau^2.
\end{equation}

From the description of a projective structure as the point equivalence class of a pair of second order ODEs  \eqref{eq:pair-2nd-ODEs}, one can obtain a local coordinate description of $\scE$ and $\scV.$ Taking $(x^0,x^1,x^0)$ as local coordinates on $M$ and $(p^1,p^2)$ as fiber coordinates for $\PP T^*M\to M,$ the description above gives  
\begin{equation}\label{eq:D1-D2-local}
\scV=\langle\partial_{p^1},\partial_{p^2} \rangle,\qquad \scE=\langle v_1,v_2\rangle,
\end{equation}
where
\begin{equation}\label{eq:basis-para-CR-D2-local}
  \begin{aligned} v_1&=\partial_{x^1}+p^1\partial_{x^0}+\left[-\Gamma^1_{00}(p^1)^3-\Gamma^2_{00}(p^1)^2p^2-\Gamma^0_{00}(p^1)^2+\Gamma^1_{11}p^1+\Gamma^2_{11}p^2+\Gamma^0_{11}\right] \partial_{p^1}\\
    & +\left[-\Gamma^1_{00}(p^1)^2  p^2 -  \Gamma^2_{00}p^1  (p^2)^2+ \Gamma^1_{20}(p^1)^2 -  (\Gamma^0_{00}+  \Gamma^1_{10}-  \Gamma^2_{20})p^1  p^2- \Gamma^2_{10}(p^2)^2\right.\\
    &+  \left. (\Gamma^0_{20}+  \Gamma^1_{21})p^1-  (\Gamma^0_{10}+  \Gamma^2_{21})p^2+\Gamma^0_{21}\right]\partial_{p^2},\\
  v_2&=\partial_{x^2}+p^2\partial_{x^0}+\left[-\Gamma^2_{00}(p^2)^3-\Gamma^1_{00}(p^2)^2p^1-\Gamma^0_{00}(p^2)^2+\Gamma^1_{22}p^1+\Gamma^2_{22}p^2+\Gamma^0_{22}\right] \partial_{p^2}\\
    & +\left[-\Gamma^1_{00}(p^1)^2  p^2 -  \Gamma^2_{00}p^1  (p^2)^2- \Gamma^1_{20}(p^1)^2 +  (-\Gamma^0_{00}+  \Gamma^1_{10}-  \Gamma^2_{20})p^1  (p^2)+ \Gamma^2_{10}(p^2)^2\right.\\
    &+  \left. (-\Gamma^0_{20}+  \Gamma^1_{21})p^1+  (\Gamma^0_{10}+  \Gamma^2_{21})p^2+\Gamma^0_{21}\right]\partial_{p^1}.
  \end{aligned}
\end{equation}
In this local coordinate system  the torsion $T^1$ can be explicitly expressed in terms of the first jet of the Thomas symbols, $\Pi^i_{jk}$, which will not be of importance to us.

\subsection{The twistor bundle and its 2-nondegenerate CR structure} 
\label{sec:twistor-bundle-2}

Given an oriented projective structure $[\nabla]$ on $M,$ let $(\PP_+ T^*M;\scE,\scV)$ be its corresponding almost para-CR structure on the oriented projectivized cotangent bundle, $\PP_+ T^*M,$ with co-oriented contact distribution $\scF=\scE\oplus\scV$. We define the twistor bundle of $[\nabla]$, denoted by $\nu\colon T^{[\nabla]}\to\PP_+ T^*M,$ as the bundle of almost complex structures on   $\scF$ that are compatible with its conformal  symplectic 2-form and  indefinite bilinear form. As a result, at every $p\in \PP_+T^*M,$ any such compatible almost complex structure acts on $\scF_p$ via the action of the structure group. Recall from \ref{sec:corr-space-5d} that almost para-CR structures on $\PP_+T^*M$   are Cartan geometries of type $(\mathrm{SL}(4,\RR),P_{1,3}),$ where $P_{1,3}=P_0\ltimes P_+$ and $P_0=\RR^+\times \mathrm{GL}^+(2,\RR)$ is referred to as the structure group.   At every $p\in \PP_+ T^*M,$ one finds that the almost complex structure
\[J=
  \begin{pmatrix}
    0 & -1\\
    1 & 0
  \end{pmatrix}
\]
 is  compatible  since $J\in P_0.$ Clearly $P_0$ acts transitively on   compatible almost complex structures on $\scF_p$ and the stabilizer of $J$ is $\RR^+\times \mathrm{CO}(2,\RR).$ As a result,  the fibers of the twistor bundle $\nu\colon\cTT\to \PP_+ T^*M$ are
\[\cTT_{p}:=\nu^{-1}(p)=\mathrm{SL}(2,\RR)\slash \mathrm{SO}(2,\RR)\cong\DD^2.\]
The stabilizer of $J$ defines the subgroup $P_J\subset P_{1,3}$ given by $P_J=(\RR^\times\times \mathrm{CO}(2,\RR))\ltimes P_+.$ Hence,      $\check\pi\colon\cP\to\cTT$ is a principal $P_J$-bundle and one has 
\begin{equation}\label{eq:twistor-bundle-3D-proj}
  \cTT\cong\cP\slash P_J=\cP\times_{P_{1,3}}P_{1,3}\slash P_J.
\end{equation}

Similarly to the two descriptions of $\cT$ in \ref{sec:lebr-feff-constr}, one can give another description of $\cTT$ using complexified Lagrangian null planes.  To see this, by \ref{sec:corr-space-5d}, one knows that in the almost para-CR structure on $\PP_+T^*M,$ the subconformal structure $[g]\subset\mathrm{Sym}^2(\scF^*)$ and the conformal  symplectic 2-form on the contact distribution $[\rho]\subset\biw^2\scF^*$ are given by
\[g=\tau^1\tau^3+\tau^2\tau^4,\quad \rho=\tau^1\w\tau^3+\tau^2\w\tau^4.\]
At each point $p\in \PP_+ T^*M$ the 1-parameter family  of (real or complex) null and Lagrangian planes in $\scF_p\subset T_p\PP_+ T^*M$, which as in   \eqref{eq:alpha-beta-planes} are called     $\alpha$-planes,  is 
\begin{equation}\label{eq:alpha-planes-nabla}
  \cA_p=\left\{\Ker\left\{\tau^0,a_1\tau^1-a_2\tau^2,a_2\tau^3+a_1\tau^4\right\}\ |\ [a_1:a_2]\in \PP^1\right\}.
  \end{equation}
Allowing $[a_1:a_2]$ to be in $\RR\PP^1$ or $\CC\PP^1,$ one obtains   real and complex $\alpha$-planes at $p\in M$, denoted by $\cA_p^\RR$ and $\cA_p^\CC,$ respectively. The set of  complex $\alpha$-planes without any real $\alpha$-plane i.e. $\cA^\CC_p\backslash\cA^\RR_p,$ is isomorphic to $\CC\PP^1\backslash\RR\PP^1.$ Similarly to our discussion in \ref{sec:lebr-feff-constr}, after fixing an orientation, the fibers of    $\cTT$ can be identified with a connected component of $\cA^\CC_p\backslash\cA^\RR_p$ which is $\DD^2.$  In other words,  at every  $p\in\PP_+ T^*M,$ using the action of $\mathrm{GL}(2,\RR)$ on $\scE_p$ and $\scV_p,$ one can establish a bijection between elements  of $\cA_p^\CC\backslash\cA_p^\RR$ and  the dual $\ri$-eigenspace for each compatible almost complex structure acting on  $\scE_p$ and $\scV_p.$  
\begin{proposition}\label{prop:twistor-bundle-2nondeg-CR}
  The 7-dimensional twistor bundle $\nu\colon\cTT\to\PP_+ T^*M$ of an oriented  projective structure $[\nabla]$ on a 3-manifold $M$ is equipped with a 2-nondegenerate CR structure of hypersurface type. The corank one distribution $\what\scF\subset T\cTT$ is the pre-image of the canonical contact distribution  $\scF\subset T\PP_+ T^*M,$ the Levi kernel $K\subset \what\scF$  is the vertical distribution of the $\DD^2$-bundle $\cTT\to\PP_+ T^*M$ with its canonical complex structure $\scK\oplus\cscK=\CC\otimes K$. With respect to the projection $\check\pi\colon\cP\to\cTT$ and  Cartan connection \eqref{eq:para-CR-CartanCurv}, at each point $z\in\cTT,$ the holomorphic distribution $\what\scH_z\subset \CC\otimes\what\scF_z$ is   $\check\pi_*\Ker\{\tau^0,\zeta^1,\zeta^2,\zeta^3\}$  where
\begin{equation}\label{eq:zetas-proj-str}
 \zeta^1=\tau^1+\ri\tau^2,\quad \zeta^2=\tau^3+\ri\tau^4,\quad \zeta^3=\eta^2_2-\eta^1_1+\ri(\eta^1_2+\eta^2_1).
  \end{equation}
\end{proposition}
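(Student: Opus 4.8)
The plan is to verify directly, using the Cartan connection \eqref{eq:para-CR-CartanCurv} and the structure equations \eqref{eq:3Dproj-CartanCurv}--\eqref{eq:proj-weyl}, that the data $(\what\scF,\what\scH)$ defined in the statement satisfies all the axioms of a 2-nondegenerate CR structure of hypersurface type in the sense of Definition \ref{def:cr-para-cr}. Since $\check\pi\colon\cP\to\cTT$ is a principal $P_J$-bundle by \eqref{eq:twistor-bundle-3D-proj}, the first step is to identify which 1-forms among the entries of $\varphi$ in \eqref{eq:para-CR-CartanCurv} are semi-basic for $\check\pi$; from the description $P_J=(\RR^\times\times\mathrm{CO}(2,\RR))\ltimes P_+$ one reads off that $\tau^0$ and the complex 1-forms $\zeta^1,\zeta^2,\zeta^3$ in \eqref{eq:zetas-proj-str}, together with their conjugates, span the semi-basic 1-forms, and that $\Ker\{\tau^0\}$ descends to the corank-one distribution $\what\scF$, which is by construction the pre-image of the contact distribution $\scF$ under $\nu$.

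Next I would check integrability of the holomorphic Pfaffian system $\cI=\{\tau^0,\zeta^1,\zeta^2,\zeta^3\}$, i.e. that $\exd\cI\equiv 0$ modulo $\cI$, which establishes that $\what\scH$ is an integrable distribution and, with its conjugate, defines a CR structure (the eigenspace decomposition $\CC\otimes\what\scF=\what\scH\oplus\overline{\what\scH}$ giving the almost complex structure $\cJ_{-1}$). Here one uses $\exd\tau^0\equiv\tau^1\w\tau^3+\tau^2\w\tau^4$ together with the structure equations for $\exd\tau^1,\exd\tau^2,\exd\tau^3,\exd\tau^4$ and for $\exd(\eta^2_2-\eta^1_1)$, $\exd(\eta^1_2+\eta^2_1)$ coming from $\exd\varphi=-\varphi\w\varphi+\bvarphi$; since $\cTT$ is a correspondence space of the flat-at-$W$-and-$T^2$ para-CR structure, the relevant curvature contributions $\beeta$, $\btau$ either vanish or are proportional to $\tau^1\w\tau^2$-type terms lying in $\cI$, so integrability should drop out cleanly. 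The compatibility condition $\cL(\cJ X,\cJ Y)=\cL(X,Y)$ and $\Tr\cJ_{-1}=0$ follow from the fact that $\exd\tau^0\equiv\tfrac12(\zeta^1\w\overline{\zeta^2}+\overline{\zeta^1}\w\zeta^2)$ modulo $\{\tau^0\}$, exhibiting the Levi form as the imaginary part of a Hermitian form; computing its rank shows it has a one-dimensional kernel spanned by the directions dual to $\zeta^3,\overline{\zeta^3}$, so the Levi kernel $K$ is exactly $\check\pi_*\Ker\{\tau^0,\zeta^1,\zeta^2,\overline{\zeta^1},\overline{\zeta^2}\}$, which is the vertical distribution of $\nu\colon\cTT\to\PP_+T^*M$ (the $\DD^2$-fibers being where only $\zeta^3,\overline{\zeta^3}$ survive), with $\scK\oplus\cscK=\CC\otimes K$ its canonical splitting.

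Finally I would verify 2-nondegeneracy: one computes the higher Levi bracket $\cL_2\colon\scK\otimes\overline{\what\scH}\to\what\scH/\scK$ from \eqref{eq:2nondeg-Levi-bracket}, which amounts to reading off the coefficient of $\zeta^1$ (or $\zeta^2$) in $\exd\zeta^3$ modulo $\{\tau^0,\zeta^2,\zeta^3,\overline{\zeta^3}\}$; the structure equation for $\exd\zeta^3$ produces terms like $\zeta^1\w\overline{\zeta^3}$ with nonzero coefficient, so $\cL_2(\scK,\overline{\what\scH})\neq 0$ unless the $\scK$-argument vanishes, and by the remark after Definition \ref{def:cr-para-cr} the conjugate condition on $\overline\cL_2$ is then automatic in the CR ($\delta=-1$) case. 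The main obstacle is the bookkeeping: one must be careful that the 1-forms $\tau^0,\zeta^1,\zeta^2,\zeta^3$ are genuinely well-defined on $\cTT$ (equivalently, transform among themselves up to scale and $\mathrm{CO}(2,\RR)$-rotation under $P_J$, not merely under $P_0$), and that the identification of $\cTT$-fibers with a connected component of $\cA^\CC_p\setminus\cA^\RR_p$ is compatible with the coframe choice so that the holomorphic distribution $\what\scH_z$ genuinely varies with the point $z$ in the fiber — this is the analogue of the tautological nature of the complex structure in \cite{Mettler-1} and in the construction of $\cT$ in \ref{sec:lebr-feff-constr}, and it is what makes the Levi kernel the vertical rather than some other rank-2 subbundle.
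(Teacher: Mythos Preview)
Your plan is essentially the paper's own proof: identify the semi-basic forms via the description of $P_J$, check integrability of $\cI=\{\tau^0,\zeta^1,\zeta^2,\zeta^3\}$ from the structure equations, read off the degenerate Levi bracket from $\exd\tau^0$, and then verify 2-nondegeneracy directly. The paper carries this out exactly, recording \eqref{eq:contact-dist-on-cTT} and \eqref{eq:2-nondege-CR}.

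One technical slip: your proposed check of $\cL_2$ via ``the coefficient of $\zeta^1$ in $\exd\zeta^3$ modulo $\{\tau^0,\zeta^2,\zeta^3,\overline{\zeta^3}\}$'' does not detect what you want. The $1$-form $\zeta^3$ does \emph{not} vanish on $\cscH$ (indeed $\partial_{\zeta^3}\in\cscH$), so $\exd\zeta^3(X,Y)$ for $X\in\scK$, $Y\in\cscH$ is not simply $-\zeta^3([X,Y])$; moreover the term $\zeta^1\wedge\overline{\zeta^3}$ you are looking for already lies in the ideal you are reducing modulo. The correct dual computation is the one the paper does: since $\scH/\scK$ is detected by the $1$-forms $\overline{\zeta^1},\overline{\zeta^2}$ (which do vanish on $\scK\oplus\cscH$), one computes $\exd\overline{\zeta^1}$ and $\exd\overline{\zeta^2}$ modulo $\{\tau^0,\overline{\zeta^1},\overline{\zeta^2},\overline{\zeta^3}\}$, obtaining the nonzero terms in \eqref{eq:2-nondege-CR} that witness $\cL_2\neq 0$. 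With this correction your argument goes through and coincides with the paper's.
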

\begin{proof}
The proof is similar to that of  Proposition \ref{prop:algebr-type-CR-twist}. From  \eqref{eq:twistor-bundle-3D-proj} it follows  that for any section $s\colon \cTT\to\cP$ the 1-forms $(s^*\tau^0,\cdots,s^*\tau^4,s^*(\eta^1_1-\eta^2_2),s^*(\eta^1_2+\eta^2_1))$ are semi-basic for $\cTT\to\PP_+T^*M.$ Using the projection $\nu\colon\cTT\to \PP_+ T^*M,$ define the corank 1 distribution
\[\what \scF=\nu_{*}^{-1}(\scF)=\check\pi_*\Ker\{\tau^0\}\subset T\cTT.\]
By abuse of notation we drop the pull-back $s^*$ in what follows. Consider the complex Pfaffian system
\begin{equation}\label{eq:holomorphic-2-nondeg}
  \cI=\{\tau^0,\zeta^1,\zeta^2,\zeta^3\},
  \end{equation}
where $\zeta^i$'s are as in \eqref{eq:zetas-proj-str}.  Using the structure equations \eqref{eq:para-CR-CartanCurv} and \eqref{eq:proj-weyl}, it is straightforward to show that $\cI$ is  integrable and defines a complex structure on $\widehat\scF$ whose holomorphic distribution is given by $\what\scH=\Ker\cI\subset \CC\otimes\what\scF$.

  Furthermore, by construction,  the Levi bracket on $\widehat\scF$ is degenerate since
  \begin{equation}\label{eq:contact-dist-on-cTT}
    \exd\tau^0\equiv \half (\zeta^1\w\czeta^2-\zeta^2\w\czeta^1),\mod\{\tau^0\}.
      \end{equation}
The complexification of degenerate directions of the Levi bracket is  $\scK\oplus \bar \scK\subset \CC\otimes K$ where $\scK=\langle\check\pi_*\partial_{\zeta^3}\rangle.$
Using the symbol algebra of the almost para-CR structure one obtains 
\begin{equation}\label{eq:2-nondege-CR}
  \exd \czeta^1\equiv \half \zeta^1\w\czeta^3,\quad
  \exd \czeta^2\equiv -\half \zeta^2\w\czeta^3,\mod \{\tau^0,\czeta^1,\czeta^2,\czeta^3\}. 
\end{equation}
By the definition of the higher Levi bracket  \eqref{eq:2nondeg-Levi-bracket},  relations \eqref{eq:2-nondege-CR} imply 2-nondegeneracy.
\end{proof}
Alternatively,  at every $z\in\cTT,$ the 1-forms $\zeta^1,\zeta^2$ in \eqref{eq:zetas-proj-str} can be viewed as the horizontal lift of the dual $\ri$-eigenspace of the almost complex structure that corresponds to $z$ with respect to some almost para-CR Weyl structure. The 1-form $\zeta^3$ is the canonical holomorphic 1-form on $\mathrm{SL}(2,\RR)\slash\mathrm{SO}(2,\RR)$ that is invariant under the action of $\mathrm{SO}(2,\RR).$ As before, this alternative construction of the CR structure is  independent of the choice of the Weyl structure for the almost para-CR structure.
 
\begin{remark}
    The induced CR structure on  $\cTT$ is a special case of a more general construction that has been studied in \cite{KM-subconformal} in the case of zero-curvature subconformal contact 5-manifolds and  in \cite{Gregorovic-CR} for a larger class of parabolic geometries. 
\end{remark}

\section{Weyl metrizability and CR submanifolds} 
\label{sec:weyl-metr-as}
In this section we show that the Weyl metrizability of an oriented projective structure is equivalent to the existence of a twistor CR manifold with a canonical choice of CR Weyl connection as a CR submanifold in the 7-dimensional twistor bundle of a projective structure. In Corollary \ref{cor:CR-type-N} we consider a wider class of nondegenerate CR submanifolds in the twistor bundle of the projective structure characterized by being transverse to the Levi kernel and  their holomorphic binary quartic being zero or having  a repeated root of multiplicity four.

\subsection{Projective class of a conformal Weyl structure} 
\label{sec:proj-class-weyl}
Here we obtain  necessary conditions for the Weyl metrizability of an oriented projective structure.  The Cartan geometry of a conformal structure $[g],$ is denoted by $(\cC\to M,\psi),$ satisfying \eqref{eq:conf-cartan-conn} and \eqref{eq:Cartan-curvature-conformal-conn}. The Cartan geometry of a projective structure, $[\nabla],$ is denoted as $(\cP\to M,\varphi),$ satisfying \eqref{eq:3Dproj-CartanCurv}.

Given a conformal structure $[g],$ let $\nabla^{[g]}$ be a conformal connection (Weyl structure)  associated to $s\colon\cC_0\to\cC.$  As a result, the pull-back of the 1-forms $\xi_i$ in \eqref{eq:conf-cartan-conn} satisfy
\begin{equation}\label{eq:rho-tensor-conformal}
s^*\xi_{i}=\sP_{ij}s^*\omega^j,  
\end{equation}
where the functions $\sP_{ij}$ on $\cC_0$ are referred to as the Schouten tensor or the Rho tensor of $\nabla^{[g]}.$  Subsequently, the inclusion $\cC_0\subset \cP$ gives  a $\mathrm{CO}(p,q)$ reduction of the projective structure of $[\nabla^{[g]}]$, i.e. a reduction of the Cartan bundle $\cP\to M,$ to the principal $\mathrm{CO}(p,q)$-bundle $\cC_0\to M$ defined by the conformal connection $\nabla^{[g]}.$  Straightforward computation shows, c.f.  the proof of Theorem \ref{thm:conf-beltrami}, that the pull-back of the Cartan connection  $\varphi$  for  the projective structure $[\nabla^{[g]}]$ to $\cC_0,$ in  matrix form \eqref{eq:para-CR-CartanCurv},   is given by
\begin{equation}\label{eq:Weyl-conformal-projective}
  \def\arraystretch{1.3}
  \begin{pmatrix}
    -\tfrac 34\theta^0  & \sQ_{2i}\omega^i & \sQ_{1i}\omega^i & \sQ_{0i}\omega^i\\
    \omega^2 & \tfrac 14\theta^0 & -\theta^3 & -\ve\theta^2\\
    \omega^1 & \theta^3 & \tfrac 14\theta^0  &-\ve\theta^1\\
    \omega^0 & \theta^2 & \theta^1 & \tfrac 14\theta^0
  \end{pmatrix}
\end{equation}
where, defining $[\ve_{ij}]$ such that $\ve_{00}=1,\ve_{11}=\ve_{22}=\ve$ and $\ve_{ij}=0$ for $i\neq j,$ one has
\begin{equation}\label{eq:Q-P-3}
  \sQ_{ij}=\half\ve^{kl}\sP_{kl}\ve_{ij}+\tfrac 18 (5\sP_{ij}-\sP_{ji}).
  \end{equation}
Let $\RT$ denote the bundle of  rays of future time-like  1-forms in the Lorentzian signature and   the  oriented projectivized cotangent bundle  in the Riemannian signature. The following holds.
\begin{proposition}\label{prop:necessary-cond-weyl-metr}
  If an oriented projective structure $[\nabla]$ on a 3-manifold $M$ is Weyl metrizable for a conformal structure $[g]$ then  there exists   a section $s\colon \RT \to\cTT$   with the property that $s(\RT)$ is a CR submanifold of $\cTT$ with an induced nondegenerate Levi bracket and of hypersurface type whose fundamental binary quartic is either zero or has a repeated root of multiplicity  four. Moreover, in terms of the CR coframing \eqref{eq:zetas-proj-str}, on $N$ one has the nonintegrability condition
  \begin{equation}\label{eq:nondege-conformal}
    \exd\zeta^1\w\zeta^1\w\czeta^1=\ve\tau^0\w\zeta^1\w\czeta^1\w\zeta^2,
  \end{equation}
 where $\ve=1$ or $-1$ if $[g]$ is Riemannian or Lorentzian, respectively.  
\end{proposition}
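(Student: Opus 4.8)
The plan is to produce the section $s$ directly from the conformal Weyl connection and then to recognise its image as the twistor CR manifold $\cT$ of $(M,[g])$; once that identification is in place, nondegeneracy of the induced Levi bracket, the hypersurface type, and the root type of the fundamental quartic all follow immediately from Proposition~\ref{prop:algebr-type-CR-twist}, and \eqref{eq:nondege-conformal} reduces to a short structure‑equation computation.

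First I would build $s$. Let $\nabla^{[g]}$ be a conformal connection realising the Weyl metrizability, i.e. a reduction $\cC_0\to\cC$ in the sense of Definition~\ref{def:weyl-str}, with Rho tensor $\sP_{ij}$ as in \eqref{eq:rho-tensor-conformal}. Since $[\nabla^{[g]}]=[\nabla]$, formula \eqref{eq:Weyl-conformal-projective} (together with \eqref{eq:Q-P-3}) is exactly the assertion that there is a bundle map $\sigma\colon\cC_0\to\cP$ over $M$, equivariant for the inclusion $Q_0\hookrightarrow P_1$, whose pull‑back $\sigma^{*}\varphi$ of the projective Cartan connection is the displayed matrix. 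Comparing that matrix entrywise with \eqref{eq:para-CR-CartanCurv} and using \eqref{eq:zetas-proj-str}, one reads off
\[\sigma^{*}\tau^{0}=\omega^{0},\qquad \sigma^{*}\zeta^{1}=\omega^{1}+\ri\omega^{2},\qquad \sigma^{*}\zeta^{2}=\theta^{1}+\ri\theta^{2},\qquad \sigma^{*}\zeta^{3}=0.\]
Hence the composite $\check\pi\circ\sigma\colon\cC_0\to\cTT$ annihilates the fibre directions of $\cC_0\to\cT$ (those dual to $\theta^{0}$ and $\theta^{3}$) and is equivariant for $S\slash Q_{+}=\mathrm{CO}(2,\RR)\hookrightarrow P_{J}$, so it descends to a section $s\colon\PP_{+}T^{*}M=\cT\to\cTT$ of $\nu$. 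Put $N:=s(\PP_{+}T^{*}M)$; as the image of a section of $\nu\colon\cTT\to\PP_{+}T^{*}M$ it is an embedded $5$‑submanifold, automatically transverse to the vertical distribution of $\nu$, that is, to the Levi kernel $K$.

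Next I would identify the CR structure induced on $N$. Because $K\subset\what\scF$ is transverse to $N$, the distribution $\what\scF\cap TN$ has corank one in $TN$ and $\nu_{*}$ carries it isomorphically onto the canonical contact distribution $\scF$ on $\PP_{+}T^{*}M$; the intersection of the involutive distribution $\what\scH$ with $\CC\otimes TN$ has constant complex rank two, so $N$ is a CR submanifold of hypersurface type. Tracing the above pull‑back identities through $\sigma$ and $\check\pi$, the complex tangent distribution of $N$ is $q_{*}\Ker\{\omega^{0},\,\omega^{1}+\ri\omega^{2},\,\theta^{1}+\ri\theta^{2}\}$ for the projection $q\colon\cC_0\to\cT$, which is precisely the holomorphic distribution of the twistor CR structure built on $\cT$ in the proof of Proposition~\ref{prop:algebr-type-CR-twist}. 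Thus $N$, with the CR structure induced from $\cTT$, is the twistor CR manifold of $(M,[g])$, and Proposition~\ref{prop:algebr-type-CR-twist} immediately gives that its Levi bracket is nondegenerate and that its fundamental binary quartic is zero (when $[g]$ is flat) or has a repeated root of multiplicity four.

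It remains to verify \eqref{eq:nondege-conformal}. Here I would pull back $\exd\zeta^{1}$ by $\sigma$ and expand using the conformal structure equations read off from \eqref{eq:Cartan-curvature-conformal-conn}: one finds that every term of $\exd\zeta^{1}$ either contains $\zeta^{1}$, or is built from $\omega^{0},\omega^{1},\omega^{2}$ alone and hence dies on wedging with $\zeta^{1}\w\czeta^{1}$, or is the single term proportional to $\tau^{0}\w\zeta^{2}$ with coefficient $\ve$; wedging with $\zeta^{1}\w\czeta^{1}$ then leaves exactly \eqref{eq:nondege-conformal}, the sign $\ve$ being $1$ or $-1$ according as the preserved conformal metric is Riemannian or Lorentzian. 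The step I expect to be the real obstacle is the identification in the previous paragraph: checking rigorously that the abstractly defined $s$ — equivalently, the field of compatible almost complex structures on $\scF$ it encodes — reproduces the conformal twistor CR structure rather than some unrelated CR submanifold. Concretely this means carrying the structure‑group inclusions $Q_{0}\hookrightarrow P_{1}$ and $\mathrm{CO}(2,\RR)\hookrightarrow P_{J}$ through compatibly and matching the holomorphic Pfaffian system \eqref{eq:holomorphic-2-nondeg} of $\cTT$ with the one from the proof of Proposition~\ref{prop:algebr-type-CR-twist}; the computational heart of the argument, \eqref{eq:Weyl-conformal-projective}, is already at hand (cf. the proof of Theorem~\ref{thm:conf-beltrami}).
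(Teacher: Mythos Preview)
Your proposal is correct and follows essentially the same route as the paper: construct $\iota\colon\cC_0\to\cP$ from the conformal connection via \eqref{eq:Weyl-conformal-projective}, read off $\iota^*\tau^0=\omega^0$, $\iota^*\zeta^1=\omega^1+\ri\omega^2$, $\iota^*\zeta^2=\theta^1+\ri\theta^2$, $\iota^*\zeta^3=0$, descend to a section $\cT\to\cTTg$, recognise the induced CR structure as the twistor CR manifold of $[g]$ and invoke Proposition~\ref{prop:algebr-type-CR-twist}, then check \eqref{eq:nondege-conformal} from the conformal structure equations. The ``real obstacle'' you flag is in fact already dispatched by the pull-back identities you wrote down, which match \eqref{eq:holom-1-forms-CR-cT} on the nose.
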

In the statement of the proposition, since $\RT\subseteq\PP_+T^*M$ is an open subset, by restriction, we have the fibration  $\cTT\to\RT$ using which we can define sections $s\colon\RT\to\cTT.$
\begin{proof}
  Given a conformal connection  $\nabla^{[g]}$ for $[g],$ by Proposition \ref{prop:twistor-bundle-2nondeg-CR},  the 2-nondegenerate CR structure on $\cTTg$ has holomorphic distribution   $\what\scH\subset \CC\otimes T\cTTg$  given by $\Ker\{\tau^0,\zeta^1,\zeta^2,\zeta^3\}$.
  Since the conformal connection  $\nabla^{[g]}$ corresponds to a section $s\colon\cC_0\to \cC$,  the projective connection $\nabla^{[g]}\in [\nabla^{[g]}]$ in turn defines an inclusion  of principal bundles
  \begin{equation}\label{eq:inclusion-Weyl-iota}
    \iota\colon \cC_0\to \cP,
      \end{equation}
      where $(\cP\to M,\varphi)$ is the Cartan geometric data for  the oriented projective structure $[\nabla^{[g]}].$

      By the construction of $\cT,$ it follows that $\cC_0\to\cT$ is a principal  $\mathrm{CO}(2,\RR)$-bundle. Hence, using \eqref{eq:inclusion-Weyl-iota}, one obtains a local embedding
      \begin{equation}\label{eq:Weyl-conn-twistor-embedding}
        \iota(\cT):=\iota(\cC_0)\slash\mathrm{CO}(2,\RR)\subset \cTTg\cong\cP\slash P_J,
      \end{equation}
wherein we are using the induced injection $\mathrm{CO}(2,\RR)\to P_J;$ see \ref{sec:twistor-bundle-2}.   Recalling that $\cT\cong\RT,$  \eqref{eq:Weyl-conn-twistor-embedding}   defines a section $\RT \to \cTTg.$ 
  
The pull-back of $\varphi$ for $[\nabla^{[g]}]$ to $\RT\subseteq\PP_+ T^*M$ is expressed as \eqref{eq:Weyl-conformal-projective}. Using the definition of the contact 1-form $\tau^0$ and the holomorphic 1-forms $\zeta^i$ from \eqref{eq:zetas-proj-str}, one obtains
\begin{equation}\label{eq:pull-back-CR-submanifold-cT}
  \iota^*\tau^0=\omega^0,\quad \iota^*\zeta^1=\omega^1+\ri\omega^2,\quad  \iota^*\zeta^2=\theta^1+\ri\theta^2,\quad  \iota^*\zeta^3=0,
  \end{equation}
which, by \eqref{eq:holom-1-forms-CR-cT},  coincides with the contact and holomorphic 1-forms for the CR structure on $\cT.$ In other words, $\iota(\cT)$ is a Levi-nondegenerate hypersurface-type CR submanifold of $\cTTg$ that is transverse to its Levi kernel.

From Proposition \ref{prop:algebr-type-CR-twist} we know that the fundamental binary quartic of the CR structure on $\cT$ is either zero or has a repeated root of multiplicity four. Lastly, checking  condition \eqref{eq:nondege-conformal} is immediate from \eqref{eq:pull-back-CR-submanifold-cT} and the structure equations  \eqref{eq:Cartan-curvature-conformal-conn} for conformal structures. 
\end{proof}
\begin{remark}\label{rmk:closed-weyl-structure}
  A Weyl structure $\nabla^{[g]}$ for which $\sP_{ij}=\sP_{ji},$ is called a \emph{closed Weyl structure} since, by structure equations \eqref{eq:cotton-york},   $\exd\theta^0=0$ holds.  Furthermore, given any section $s_0\colon M\to \cC_0,$  one has   $\nabla^{[g]}s_0^*g=-2s_0^*\theta^0\otimes s_0^*g$ for $g\in \mathrm{Sym}^2(T^*\cC)$   in \eqref{eq:conformal-metric}. Denoting $g_0=s_0^* g$ and $\phi_0=s^*_0\theta^0,$ from $\exd\phi_0=0,$ locally,  $\phi_0=\exd f$ holds for some function $f.$ It is then elementary to show that $\nabla^{[g]}g_1=0$ where $g_1=e^{2f} g_0$ and that $\nabla^{[g]}$ is the Levi-Civita connection of  $g_1.$ In other words, closed Weyl structures are locally the Levi-Civita connection of   representatives in $[g],$ defined up to positive homotheties. The Rho tensor $\sP_{ij}$ and the Ricci  tensor ${R_{ij}}$ of the metric are related via $\sP_{ij}=\tfrac{-1}{n-2}({R}_{ij}-\tfrac{1}{2(n-1)}R\ve_{ij})$ where $R=\ve^{ij}{R}_{ij}$ and $n=\mathrm{dim} M$ 
\end{remark}
\subsection{CR submanifolds as holomorphic sections of the Levi kernel}  
\label{sec:sections-levi-kerner}
To show that the conditions in Proposition \ref{prop:necessary-cond-weyl-metr} are also sufficient, we need the following definition and an observation about Weyl structures for twistor CR manifolds wherein $\PP_+T^*M$ denotes the bundle of rays of 1-forms. We recall from \ref{sec:from-3d-projective} that the Cartan geometry of a projective structure $[\nabla]$ is denoted as $(\cP\to M,\varphi)$ has correspondence space $(\cP\to\PP_+T^*M,\varphi)$ equipped with an almost para-CR structure and a 7-dimensional twistor bundle $\cTT\to \PP_+ T^*M$ equipped with a 2-nondegenerate CR structure.
\begin{definition} \label{def:hol-section}
  Given an oriented 3-dimensional projective structure $[\nabla]$ with twistor bundle $\cTT\to\PP_+T^*M,$ a section $\tilde t\colon N\to\cTT$ for an open subset $N\subset \PP_+ T^*M$ is called holomorphic if  the complex Pfaffian system $s^*\cI=s^*\{\tau^0,\zeta^1,\zeta^2,\zeta^3\},$  as defined in \eqref{eq:zetas-proj-str},   is integrable, where $s=\tilde s\circ\tilde t\colon N\to \cP,$ for some section $\tilde s\colon \cTT\to\cP$. 
\end{definition}
In Remark \ref{rmk:quartic-Lagrangian-fibration} we will comment on how \eqref{eq:hol-coframe-trans} implies that a holomorphic section, as defined above, is independent of the section $\tilde s\colon\cTT\to\cP.$
By the construction  in \ref{sec:twistor-bundle-2},  the fibers of $\cTT\to\PP_+ T^*M$ are the leaves of the Pfaffian system $\{\tau^0,\zeta^1,\czeta^1,\zeta^2,\czeta^2\}.$ It follows that, for any section $\tilde t\colon N\to \cTT,$ $N\subset\PP_+T^*M,$ the 1-forms $(s^*\tau^0,s^*\zeta^1,s^*\zeta^2,s^*\czeta^1,s^*\czeta^2)$ give a coframe on $N$ and 
\begin{equation}
  \label{eq:zeta3-holomor}
  s^*\zeta^3=a_0\tau^0+a_1\zeta^1+a_2\zeta^2+b_1\czeta^1+b_2\czeta^2,
  \end{equation}
  for some complex-valued functions $a_0,a_1,a_2,b_1,b_2$ on $N,$ where we have dropped $s^*$ on the right hand side. Since $\{\tau^0,\zeta^1,\zeta^2,\zeta^3\}$ is integrable on $\CC\otimes T^*\cTT,$ by Definition \ref{def:hol-section},  $\tilde t\colon N\to \cTT$   is holomorphic if and only if
  \begin{equation}
    \label{eq:b12-zero}
    b_1=b_2=0.
      \end{equation}

\begin{proposition}\label{prop:Weyl-structre-cr-submanifolds}
  Any holomorphic section $\tilde t\colon N\to \cTT$ for an open subset  $N\subset \PP_+ T^*M$ defines a nondegenerate CR structure of hypersurface type with Cartan geometric data $(\cQ\to N,\gamma),$ described in Theorem \ref{thm:CR-equiv-problem},  endowed with a  canonical CR connection $t\colon\cQ_0\to\cQ$ defined via the relation
  \[
    \begin{aligned}
      t^*\zeta^3&=a\tau^0,\\
      t^*\mu_2&\equiv 0 \mod\{\tau^0,\tau^1,\tau^2,\tau^4\},
          \end{aligned}
\]
  for some $a\in C^\infty(\cQ_0,\CC),$ where $\zeta^3$ is given in \eqref{eq:zetas-proj-str}.
\end{proposition}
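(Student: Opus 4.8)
\emph{Proof strategy.}

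The plan is to first verify that $\tilde t$ endows $N$ with a genuine nondegenerate CR structure of hypersurface type and signature $(1,1)$, so that Theorem~\ref{thm:CR-equiv-problem} supplies the Cartan data $(\cQ\to N,\gamma)$, and then to single out the canonical CR connection by normalizing the residual gauge freedom inherited from the ambient projective Cartan bundle. For the first part I would pull the ambient structure back along $\tilde t$ and suppress $\tilde t^*$. By the discussion following Definition~\ref{def:hol-section}, holomorphicity of $\tilde t$ means $\zeta^3=a_0\tau^0+a_1\zeta^1+a_2\zeta^2$ on $N$ for some $a_0,a_1,a_2\in C^\infty(N,\CC)$, so on $N$ the Pfaffian systems $\{\tau^0,\zeta^1,\zeta^2\}$ and $\{\tau^0,\zeta^1,\zeta^2,\zeta^3\}$ generate the same ideal; since the latter is integrable on $\cTT$ by Proposition~\ref{prop:twistor-bundle-2nondeg-CR}, the former is integrable on $N$, and hence it cuts out an integrable complex rank-two distribution inside $\CC\otimes\Ker\{\tau^0\}$ which, together with its conjugate, is a CR structure of hypersurface type on the $5$-manifold $N$. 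Restricting \eqref{eq:contact-dist-on-cTT} gives $\exd\tau^0\equiv\tfrac12(\zeta^1\w\czeta^2-\zeta^2\w\czeta^1)\mod\{\tau^0\}$ on $N$, where $(\tau^0,\zeta^1,\zeta^2,\czeta^1,\czeta^2)$ is now a coframe; reading off the Levi form in the frame $(\zeta^1,\zeta^2)$ yields a nondegenerate Hermitian form with vanishing diagonal, hence of signature $(1,1)$, so Theorem~\ref{thm:CR-equiv-problem} applies.

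For the canonical CR connection I would view $\cP$ as a principal bundle over $\cTT$ as in \eqref{eq:twistor-bundle-3D-proj}, set $\cP_N:=\tilde t^*\cP\to N$, and note that all entries of $\varphi$ in \eqref{eq:para-CR-CartanCurv} --- in particular $\zeta^3$ and $\mu_2$ --- pull back to $\cP_N$ and, via the natural relation between $\cQ$ and the reduction of $\cP_N$ adapted to the CR coframe, may be regarded on $\cQ$. A CR connection $t\colon\cQ_0\to\cQ$ is a $P_0$-equivariant section, and the freedom in its choice is the residual nilradical-type gauge freedom of $\cP_N$, which is pointwise $5$-dimensional (a torsor over $\Omega^1(N)$). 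Starting from an arbitrary adapted lift, I would compute --- using the structure equations \eqref{eq:para-CR-CartanCurv}, \eqref{eq:proj-weyl} and those of Theorem~\ref{thm:CR-equiv-problem} --- how the $\zeta^1,\zeta^2$-components of $\zeta^3$ and the $\tau^3$-component of $\mu_2$ vary under this freedom, and check that the resulting affine map from the $5$ real gauge parameters to these five real components (four from $\zeta^3$, one from $\mu_2$) is a bijection. This produces a unique section along which $\zeta^3=a\tau^0$ and $\mu_2\equiv 0\mod\{\tau^0,\tau^1,\tau^2,\tau^4\}$; $P_0$-equivariance of the two normalization conditions then promotes it to a $P_0$-equivariant section $t\colon\cQ_0\to\cQ$, the asserted canonical CR connection. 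As a consistency check, the conformal-Weyl inclusion $\cC_0\hookrightarrow\cP$ of \S\ref{sec:proj-class-weyl}, for which \eqref{eq:Weyl-conformal-projective} gives exactly $\zeta^3=0$ and $\mu_2=\sQ_{2i}\omega^i$, should be recovered as the special case of this construction.

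The main obstacle is the bookkeeping in the second part: matching the (split real) nilradical gauge freedom of the ambient projective Cartan bundle with the normalization of the complex quantities $\zeta^3,\mu_2$, and verifying that exactly these two conditions exhaust that freedom, i.e. that the relevant response matrix is invertible. This is a concrete structure-equation computation rather than a soft argument, and it is the one place where the precise shape of \eqref{eq:para-CR-CartanCurv} and \eqref{eq:2-nondege-CR} is essential. A secondary technical point is to make the identification of $\cQ$, and of the forms $\zeta^3,\mu_2$ living on it, with the reduction of $\cP_N$ fully explicit, so that the symbols $t^*\zeta^3$ and $t^*\mu_2$ in the statement are unambiguous.
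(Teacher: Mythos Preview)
Your strategy is correct and follows essentially the same route as the paper: first establish that holomorphicity of $\tilde t$ makes $\{\tau^0,\zeta^1,\zeta^2\}$ integrable on $N$ with nondegenerate indefinite Levi form, then normalize the $5$-dimensional $P_+$-freedom in two stages --- four real parameters to achieve $a_1=a_2=0$ (i.e.\ $\zeta^3=a\tau^0$) and one more for the $\mu_2$-condition --- by computing the fiber action and checking the response is bijective. The paper carries out exactly this computation explicitly: it writes the $\widetilde P_0\ltimes P_+$-action on $(a_0,a_1,a_2)$ in closed form, reduces to the sub-bundle $\{a_1=a_2=0\}$, then computes the residual action on the $\tau^3$-component of $\mu$ (encoded as $\Re Y_1$) and reduces once more; along the way it also records the resulting expressions for the CR Cartan connection entries and the quartic coefficients $C_0,\dots,C_4$, which are needed downstream but are not part of the Proposition itself.
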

\begin{proof}
  Recall from \ref{sec:corr-space-5d} that an oriented projective structure, $[\nabla],$ defines an almost para-CR structure on $\PP_+ T^*M,$ which is a Cartan geometry $(\cP\to \PP_+ T^*M,\varphi)$ of type $(\mathrm{SL}(4,\RR),P_{1,3}),$ where  $P_{1,3}=P_0\ltimes P_+$ and $P_0=\RR^+\times \mathrm{GL}^+(2,\RR).$ Since by \eqref{eq:twistor-bundle-3D-proj} the fibers of the twistor bundle $\cTT\to\PP_+ T^*M$ are $\mathrm{SL}(2,\RR)\slash\mathrm{SO}(2,\RR)\cong \DD^2,$ a section $\tilde t\colon N\to \cTT,N\subset \PP_+ T^*M,$ is equivalent to a  reduction of the structure group of the almost para-CR structure on $N$ from $P_0=\RR^+\times\mathrm{GL}^+(2,\RR)$ to $\widetilde P_0:=\RR^+\times\mathrm{CO}(2,\RR).$ As a result of such reduction, one obtains an inclusion $\iota_0\colon\widetilde\cP_0\to\cP$ where $\widetilde\cP_0$ is a principal $(\widetilde P_0\ltimes P_+)$-bundle. Moreover,  by  Definition \ref{def:hol-section} and the subsequent discussion, since $\tilde t\colon N\to\cTT$ is holomorphic, it follows from \eqref{eq:zeta3-holomor} and \eqref{eq:b12-zero} that
  \begin{equation}
    \label{eq:zeta3-red-holom}
    \tilde t^*\zeta^3=a_0\tau^0+a_1\zeta^1+a_2\zeta^2,
      \end{equation}
  for some $a_0,a_1,a_2\in C^\infty(\widetilde\cP_0,\CC).$ One can explicitly find the dependency of  $a_i$'s on   $\widetilde P_0\ltimes P_+$ as follows.  If  two points $p,q\in\widetilde\cP_0$ lie on the fiber over $x\in N\subset\PP_+ T^*M,$ then  one has $q=\bg^{-1}p,$ for  some $\widetilde P_0\ltimes P_+$-valued $\bg$ given as
  \begin{equation}
    \def\arraystretch{1}
\bg= \begin{pmatrix}
        \tfrac{1}{\bb_0^2\bb_1}  & 0 & 0 & 0 \\
        0 &\bb_0\cos(\bc)    & -\bb_0\sin(\bc)   &   0\\
        0 &\bb_0\cos(\bc)    & \bb_0\sin(\bc)   &   0\\
        0 &0    & 0   &  \bb_1\\
      \end{pmatrix}      \begin{pmatrix}
        1 & \bq_2 & \bq_1 & \bq_0+\half \bq_1\bq_3+\half \bq_2\bq_4 \\
        0 &1    & 0   &    \bq_4\\
        0 &0    & 1   &   \bq_3\\
        0 &0    & 0   &  1\\
      \end{pmatrix}.
  \end{equation}
  The transformation of the Cartan connection \eqref{eq:3Dproj-CartanCurv} along the fibers of $\widetilde\cP_0\to N$ are given by
  \[\varphi(p)\to \varphi(\bg^{-1}p)=\bg^{-1}\varphi\bg+\bg^{-1}\exd\bg,\]
  where $\bg\in C^{\infty}(\widetilde\cP_0, \widetilde P_0\ltimes P_+ ).$   It is straightforward to find  the transformations $\tau^0(p)\to\tau^0(\bg^{-1}p)$ and $\zeta^i(p)\to \zeta^i(\bg^{-1}p)$ to be 
  \begin{equation}\label{eq:hol-coframe-trans}
    \begin{gathered}
      \tau^0\to \tfrac{1}{\bb_0^2\bb_1^2}\tau^0,\quad \zeta^1\to \tfrac{e^{\ri \bc}}{\bb_0^3\bb_1}\zeta^1-\tfrac{\bq_{34}}{\bb_0^2\bb_1^2}\tau^0,\quad \zeta^2\to \tfrac{\bb_0e^{\ri \bc}}{\bb_1}\zeta^2+\tfrac{\bq_{12}}{\bb_0^2\bb_1^2}\tau^0,\\
      \zeta^3\to e^{2\ri\bc}\zeta^3+\tfrac{\bb_0e^{\ri \bc}}{\bb_1}\zeta^2+\tfrac{e^{\ri \bc}}{\bb_0^3\bb_1}\zeta^1-\tfrac{\bq_{12}\bq_{34}}{\bb_0^2\bb_1^2}\tau^0,\\
    \end{gathered}
  \end{equation}
  where $\bq_{12}=\bq_1+\ri \bq_2,$ $\bq_{34}=\bq_3+\ri \bq_4,$ and $e^{\ri \bc}=\cos(\bc)+\ri\sin(\bc).$ Subsequently, the transformation of
  $a_i$'s along the fibers of $\widetilde\cP_0$ is found to be
  \[
    \begin{aligned}
      a_0(\bg^{-1} p)=& \bb_0^2\bb_1^2e^{2\bc\ri}a_0(p)+{\bb_1}{\bb_0^3}e^{\bc\ri}\bq_{34}a_1(p)-\tfrac{\bb_1}{\bb_0}e^{\bc\ri}\bq_{12}a_{2}(p),\\      
      a_1(\bg^{-1}p)=& \bb_0^3\bb_1e^{\bc\ri}a_1(p)+\bq_{12},\\
      a_2(\bg^{-1} p)=& \bb_0^{-1}\bb_1e^{\bc\ri}a_2(p)-\bq_{34}.
    \end{aligned}
  \]
  At the infinitesimal level, the transformations above correspond to the differential relations
  \begin{equation}\label{eq:da_123}
        \begin{aligned}
      \exd a_0\equiv & 2a_0(\eta^1_1+\eta^3_3+\ri\eta^2_1)+a_1\mu_{34}-a_2\mu_{12},\\
      \exd a_1\equiv & a_1(3\eta^1_1+\eta^3_3+\ri\eta^2_1)+\mu_{12},\\
      \exd a_2\equiv & a_2(-\eta^1_1-\eta^3_3+\ri\eta^2_1)-\mu_{34}, 
    \end{aligned}
  \end{equation}
 mod $\{\tau^0,\cdots,\tau^4\},$ where $\mu_{12}=\mu_1+\ri\mu_2$ and $\mu_{34}=\mu_3+\ri\mu_4.$
  As a result, defining
\[\cP_1=\{ p\in\widetilde\cP_0\ \vline \ a_1(p)=a_2(p)=0\},\]
one obtains a principal $(\widetilde P_0\ltimes \RR)$-subbundle $\iota_1\colon \cP_1\subset \widetilde\cP_0$  over which $a_1=a_2=0$ and therefore
\[\iota^*_{01}\zeta^3=a_0\tau^0,\]
for some $a_0\in C^{\infty}(\cP_1,\CC)$ where $\iota_{01}=\iota_0\circ\iota_1\colon\cP_1\to\cP.$  Setting $a_1=a_2=0$ in differential relations \eqref{eq:da_123}, and using the complex coframe \eqref{eq:zetas-proj-str} one arrives at
\begin{equation}
  \label{eq:X-Ys}
    \begin{aligned}
        \iota_{01}^*\mu_{12}=&X_0\tau^0+X_1\zeta^1-Y_1\zeta^2-\half a_0\czeta^2 +\half (W^1_{212}+W^2_{121}+\ri W^1_{112}+\ri W^2_{221})\czeta^1,\\
    \iota_{01}^* \mu_{34}=& Y_0\tau^0+Y_1\zeta^1 +Y_2\zeta^2+\half a_0\czeta^1,
  \end{aligned}
\end{equation}
for some complex-valued functions $X_0,X_1,Y_0,Y_1,Y_2$ on $\cP_1.$ Note that to arrive at \eqref{eq:X-Ys} one needs to complete the expressions  \eqref{eq:da_123} by finding the differential relations among all coframe derivatives of $a_i$'s and $X_i$'s, $Y_i$'s and the projective Weyl curvature. This can be done by replacing the real and imaginary parts of $\zeta^3$ according to \eqref{eq:zeta3-red-holom} in the Cartan connection $\varphi$ of the projective structure and making sure the structure equations \eqref{eq:3Dproj-CartanCurv} remain valid. Carrying out the relevant computations, although tedious, is straightforward. It is rather remarkable that the expressions for $\exd a_1$ and $\exd a_2,$ once pulled-back to $\cP_1$ result in  simple relations \eqref{eq:X-Ys}.

Similarly, one can find the group action on $\Re Y_1$ to be
\[\Re Y_1(\bg^{-1}p)=\bb_0^2\bb_1^2\Re Y_1(p)+\bq_0\]
which infinitesimally corresponds to the differential relation
\[\exd\Re Y_1\equiv 2\Re Y_1(\eta^0_0+\eta^1_1)+\mu_0,\mod\{\tau^0,\tau^1,\tau^2,\tau^3,\tau^4\}.\]
Thus, one obtains a sub-bundle $\iota_2\colon\cP_2\to\cP_1,$ which is a principal $\mathrm{CO}(2,\RR)\times\RR^+$-bundle,  defined as
\begin{equation}\label{eq:P3-Y2zero}
  \cP_2=\left\{p\in\cP_1\ \vline\ \Re Y_1(p)=0\right\}.
  \end{equation}
Any holomorphic section $\tilde t\colon N\to \cTT, N\subset \PP_+ T^*M,$  defines a nondegenerate CR structure of hypersurface type on $N$ since $\scF:=\Ker\{\tau^0\}$ is the canonical contact distribution and $\scH:=\Ker\{\tilde t^*\cI\}\subset \CC\otimes \scF$ is an integrable holomorphic distribution of a complex structure which by \eqref{eq:contact-dist-on-cTT}  satisfies the compatibility condition.  In terms of the Cartan geometric description of Theorem \ref{thm:CR-equiv-problem}, for the induced CR structure on $N\subset\PP_+ T^*M,$  the  principal $\widetilde P_0$-bundle $\cP_2\to N$ defines a CR (Weyl) connection $t\colon \cQ_0\to \cQ$   on $N$. Setting $\iota_{012}:=\iota_0\circ\iota_1\circ\iota_2\colon\cP_2\to \cP,$  one can express the entries of  $\iota_{012}^*\gamma$ in \eqref{eq:5DCR-Cartan-conn} as
\[
  \begin{gathered}
    \alpha^0=-2\tau^0,\quad \alpha=\zeta^1,\quad \beta=\zeta^2,\quad \Re\phi^0_0=\eta^1_1+(\half\Re a_0)\tau^0,\quad \Re\phi^1_1=-2\eta^1_1+(\Re a_0)\tau^0\quad \phi^2_1=\half (\Re Y_3)\tau^0,\\
    \Im\phi^0_0=-\Im\phi^1_1=-\half\eta^1_2+(\tfrac 14\Im a_0- \tfrac16 \Im Y_2)\tau^0,\quad \phi^1_2=(\tfrac 34 W^1_{212}+\tfrac 34 W^2_{121}+\half \Re X_1)\tau^0-W^0_{212}\tau^1+W^0_{112}\tau^2.
  \end{gathered}
\]
 Using the entries above, one can proceed to find $\pi,\sigma$ and $\sigma_0$ in \eqref{eq:5DCR-Cartan-conn}  and compute the coefficients of the fundamental binary quartic to be
\begin{equation}\label{eq:C_is-hol-sec}
  C_0=0,\quad C_1=\Im Y_2,\quad C_2=\tfrac 43 \Im Y_1,\quad C_3=\tfrac 32(W^2_{221}-W^1_{112})-\Im X_1,\quad C_4=W^0_{112;1}+W^0_{212;2}
\end{equation}
\end{proof}
\begin{remark}\label{rmk:quartic-Lagrangian-fibration}
The expression for $C_4$ in \eqref{eq:C_is-hol-sec} is given in terms of the first jet the fundamental torsion \eqref{eq:torion-T2} for the almost para-CR structure on $\PP_+ T^*M$ defined by $[\nabla],$ which leads to Corollary \ref{cor:conf-rigid-flat}.  Furthermore, it follows from \eqref{eq:hol-coframe-trans} that the property of being holomorphic in Definition \ref{def:hol-section} for a section $\tilde t\colon N\to\cTT, N\subset \PP_+T^*M$ is invariant under the action of  $\tilde P_0\ltimes P_+$, and, therefore,  is independent of the section $\tilde s\colon\cTT\to\cP$.   Lastly, it is clear from  $C_0=0$ in  \eqref{eq:C_is-hol-sec}  that  the binary quartic \eqref{eq:quartic} has a real root at $[a_1:a_2]=[1:0]$ which, by the parametrization \eqref{eq:alpha-beta-planes} and Proposition \ref{prop:quartic-integrability}, corresponds to the  fibers of $\PP_+ T^*M\to M.$
\end{remark}
\subsection{CR characterization of Weyl metrizability}  
\label{sec:main-theorem} 
Now we can use Proposition \ref{prop:Weyl-structre-cr-submanifolds} to show that the necessary conditions in Proposition \ref{prop:necessary-cond-weyl-metr} are also sufficient. We first give a Cartan geometric proof of the following well-known lemma, e.g. see \cite{Lebrun-thesis, Mettler-3}, which shows  that given a projective structure $[\nabla],$  any conformal structure $[g]$ can have at most one conformal connection that satisfies $\nabla^{[g]}\in[\nabla]$.
\begin{lemma}\label{lem:proj-conf-intersect}
  If a projective structure $[\nabla]$ is Weyl metrizable with respect to a conformal structure $[g],$ then the property $\nabla^{[g]}\in [\nabla]$ determines a unique  conformal connection for $[g].$ 
\end{lemma}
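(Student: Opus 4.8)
The plan is to work on the conformal Cartan bundle $\cC_0\to M$ and to read everything off the explicit form \eqref{eq:Weyl-conformal-projective} of the pulled-back projective Cartan connection. Recall that the conformal connections for $[g]$ are precisely the equivariant sections $s\colon\cC_0\to\cC$, and that two of them differ by the action of the nilradical $\RR^3\subset Q_1$, encoded by a $1$-form $\Upsilon=\Upsilon_i\omega^i$ on $M$ (equivalently, by the change \eqref{eq:rho-tensor-conformal} of the Rho tensor). Suppose $\nabla_1^{[g]}$ and $\nabla_2^{[g]}$ are conformal connections for the same $[g]$ with $\nabla_1^{[g]},\nabla_2^{[g]}\in[\nabla]$, corresponding to sections $s_1,s_2$ with $s_2=s_1\cdot\exp(\Upsilon)$. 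Each $\nabla_a^{[g]}\in[\nabla]$ yields, via \eqref{eq:inclusion-Weyl-iota}, an inclusion $\iota_a\colon\cC_0\to\cP$ with $\iota_a^*\varphi$ of the form \eqref{eq:Weyl-conformal-projective}; the first observation I would use is that the $\mathfrak{gl}(3,\RR)$-body block (the lower-right $3\times 3$ block) of \eqref{eq:Weyl-conformal-projective} is $\mathfrak{co}(p,q)$-valued, being $\tfrac14\theta^0\,\Id$ plus the $\mathfrak{so}(p,q)$-matrix assembled from $\theta^1,\theta^2,\theta^3$ with respect to the conformal metric \eqref{eq:conformal-metric}.

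Next I would transport $\iota_1^*\varphi$ to $\iota_2^*\varphi$. Since $\iota_1,\iota_2$ cover $\mathrm{id}_M$ and are compatible with the principal bundle structures, the passage from $s_1$ to $s_2$ identifies $\iota_2$ with $\iota_1$ followed by the right action of $\exp(\hat\Upsilon)$, where $\hat\Upsilon$ is the image of $\Upsilon$ in the abelian nilradical $\RR^3\subset\mathrm{SL}(4,\RR)$. Because this nilradical is abelian, $\exp(-\hat\Upsilon)\,\exd\exp(\hat\Upsilon)=\exd\hat\Upsilon$ stays in the nilradical, so the body block of $\iota_2^*\varphi=\mathrm{Ad}_{\exp(-\hat\Upsilon)}(\iota_1^*\varphi)+\exp(-\hat\Upsilon)\,\exd\exp(\hat\Upsilon)$ differs from that of $\iota_1^*\varphi$ only through $-[\hat\Upsilon,(\text{the }\mathfrak{g}_{-1}\text{-part }(\omega^0,\omega^1,\omega^2)\text{ of }\iota_1^*\varphi)]$. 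A direct bracket computation shows this correction is a multiple of $\Id$ together with the rank-one matrix whose $(i,j)$-entry is $\Upsilon_j\omega^i$. For $\iota_2^*\varphi$ to again have the shape \eqref{eq:Weyl-conformal-projective}, that correction must itself be $\mathfrak{co}(p,q)$-valued, i.e. its $g$-symmetric part must be proportional to $g=(\omega^0)^2+\ve(\omega^1)^2+\ve(\omega^2)^2$; comparing diagonal entries gives $\Upsilon_0\omega^0=\Upsilon_1\omega^1=\Upsilon_2\omega^2$, and pointwise independence of $\omega^0,\omega^1,\omega^2$ forces $\Upsilon_0=\Upsilon_1=\Upsilon_2=0$. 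Hence $s_1=s_2$ and $\nabla_1^{[g]}=\nabla_2^{[g]}$.

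The computation is elementary; the only point deserving care is the second step, namely correctly identifying the gauge parameter $\exp(\hat\Upsilon)$ produced by the $\RR^3$-freedom on $\cC$ and isolating the $\mathfrak{g}_0$-component of $\mathrm{Ad}_{\exp(-\hat\Upsilon)}(\iota_1^*\varphi)$ — once this is set up, \eqref{eq:Weyl-conformal-projective}, \eqref{eq:Q-P-3} and the $\mathfrak{so}(p,q)$-shape of its body block do the rest, and nothing depends on the signature $\ve$. Equivalently, and without Cartan bundles, two Weyl connections for $[g]$ have difference tensor $(X,Y)\mapsto\beta(X)Y+\beta(Y)X-g(X,Y)\beta^{\sharp}$, whereas two connections in $[\nabla]$ have difference tensor $(X,Y)\mapsto\phi(X)Y+\phi(Y)X$ by \eqref{eq:proj-equiv}; equating these two and contracting indices forces $\beta=\phi=0$ in any dimension $\geq 2$, so in particular the conformal connection for $[g]$ lying in $[\nabla]$ is unique.
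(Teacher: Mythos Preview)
Your proposal is correct and is essentially the paper's own argument; your closing tensor computation---equating the Weyl difference tensor $\beta(X)Y+\beta(Y)X-g(X,Y)\beta^\sharp$ with the projective one $\phi(X)Y+\phi(Y)X$ and contracting---is precisely \eqref{eq:q-r} written in classical language, and the paper carries out the same trace-then-skew-symmetrize manipulation on $\cC_0$ using \eqref{eq:conformal-related-connection} and \eqref{eq:projetive-conformal-related}.

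One phrasing issue in your first (Cartan-geometric) version deserves tightening: you write that ``$\hat\Upsilon$ is the image of $\Upsilon$ in the abelian nilradical $\RR^3\subset\mathrm{SL}(4,\RR)$,'' but there is no canonical map sending the conformal gauge parameter $\Upsilon$ (your $q$) to a projective one. What is true is that, because $\nabla_1^{[g]},\nabla_2^{[g]}\in[\nabla]$, there exists a separate projective gauge parameter $\hat\Upsilon$ (the paper's $r$) relating $\iota_1$ and $\iota_2$, a priori unrelated to $\Upsilon$; your argument then shows $\hat\Upsilon=0$, whence the two linear connections agree and so $\Upsilon=0$ as well. With that clarification your route---apply only the projective gauge change and impose that the $\mathfrak{gl}(3)$-block stay $\mathfrak{co}(p,q)$-valued---is a clean variant that avoids writing out \eqref{eq:conformal-related-connection} explicitly.
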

\begin{proof}
  Let $\nabla^1$ be a conformal connection for $[g]$ associated to $s_1\colon\cC_0\to\cC,$ with respect to which 
  \[\exd\omega^i=-\omega^i_j\w\omega^j\]
  holds  where, in relation to \eqref{eq:conf-cartan-conn}, one has  $[\omega^i_j]\in\Omega^1(\cC_0,\mathfrak{co}(p,q)),p+q=3,$ and the only non-zero 1-form entries in $[\omega^i_j]$ are
  \[\omega^1_0=s_1^*\theta^1,\qquad \omega^2_0=s_1^*\theta^2,\qquad \omega^1_2=s_1^*\theta^3,\qquad \omega^0_0=s_1^*\theta^0.\]
  By  Definition \ref{def:weyl-str} for a general Weyl structure, any other conformal connection is related to $s_1$ by an action of the nilradical factor $Q_+\subset Q_1$ at each point, where  $Q_1\subset\mathrm{SO}(p+1,q+1)$ is the first parabolic subgroup, discussed in  \ref{sec:conf-geom-dimens}. In other words, if $s_2\colon\cC_0\to\cC$ is another conformal connection, then at every $p\in\cC_0,$ $s_2(p)=\bg_p^{-1}s_1(p),$ for some $\bg_p\in Q_+\subset Q_1,$ i.e.  
  \begin{equation}\label{eq:change-Weyl}
    \psi_2=\bg^{-1}\psi_1\bg+\bg^{-1}\exd\bg.
  \end{equation}
  where $\psi_i=s_i^*\psi$ and $\bg\in C^{\infty}(\cC,Q_+).$ Explicitly,  expressing elements of $Q_+\subset \mathrm{SO}(p+1,q+1)$ as
  \[\def\arraystretch{1}
\bg_p=      \begin{pmatrix}
        1 & q_2 & q_1 & q_0 & \half \ve_{ij}q^iq^j\\
        0 &1    & 0   &  0  & \ve q_2\\
        0 &0    & 1   &  0  & \ve q_1\\
        0 &0    & 0   &  1  & q_0\\
        0 &0    & 0   &  0  & 1\\
      \end{pmatrix},\]
    one obtains from \eqref{eq:change-Weyl}   that the  Weyl connection  1-forms $\omega^i_j$ transform to
    \begin{equation}\label{eq:conformal-related-connection}
      R^*_\bg\omega^i_j=\omega^i_j+q_j\omega^i-q_k\omega^l\ve_{lj}\ve^{ki}+q_k\omega^k\delta^i_j.
          \end{equation}
  where we have dropped pull-backs by $s_1.$

Carrying out the same steps in order to relate two projective connections in a projective structure $[\nabla]$ leads to  \eqref{eq:proj-equiv}, from which it follows that two conformal connections $\nabla^1$ and $\nabla^2$ are projectively equivalent if and only if their respective connection forms $\omega^i_j$ and $R^*_q\omega^i_j$ are related by
\begin{equation}\label{eq:projetive-conformal-related}
  R^*_\bg\omega^i_j-\omega^i_j=r_j\omega^i+\delta^i_jr_k\omega^k.
  \end{equation}
for some functions $r_0,r_1,r_2$ on $\cC_0.$ Thus, the projective equivalence of two conformal connections  $\nabla^1$ and $\nabla^2$ for $[g]$ implies
\begin{equation}\label{eq:q-r}
  q_j\omega^i-q_k\omega^l\ve_{lj}\ve^{ki}+q_k\omega^k\delta^i_j=r_j\omega^i+\delta^i_jr_k\omega^k.
  \end{equation}
Summing over $i,j,$ one obtains $r_k=\tfrac 3{4}q_k.$ Subsequent insertion into  \eqref{eq:q-r} and comparing both sides yields $q_0=q_1=q_2=0.$
\end{proof}
Now we can state the main theorem of this article. 
\begin{theorem}\label{thm:characterization-weyl-metr} 
  Given an oriented projective structure $[\nabla]$ on a 3-manifold $M,$   there is a one-to-one correspondence between conformal structures $[g]$ with respect to which $[\nabla]$ is Weyl metrizable and 5-dimensional Levi nondegenerate CR submanifolds of hypersurface type that are transverse to the Levi kernel of the 7-dimensional twistor bundle, $\cTT,$ whose fundamental binary quartic is either zero or has a repeated root of multiplicity  four   and satisfy  the non-integrability condition \eqref{eq:nondege-conformal} for $\ve= 1$ or $-1$ with respect to the canonical CR connection described in Proposition \ref{prop:Weyl-structre-cr-submanifolds}%
\end{theorem}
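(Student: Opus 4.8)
The plan is to prove the asserted bijection in two directions, with the forward implication essentially repackaging Proposition~\ref{prop:necessary-cond-weyl-metr} and the reverse implication being the new content, built on Proposition~\ref{prop:Weyl-structre-cr-submanifolds}. Write $\Phi$ for the assignment sending a conformal structure $[g]$ (on an open subset of $M$) with respect to which $[\nabla]$ is Weyl metrizable to the image $\iota(\cT)\subset\cTT$ produced in Proposition~\ref{prop:necessary-cond-weyl-metr}. That $\Phi$ is well defined — independent of which conformal connection $\nabla^{[g]}\in[\nabla]$ is used — follows from Lemma~\ref{lem:proj-conf-intersect}, which says there is at most one such connection; and the submanifold $\iota(\cT)$ is Levi nondegenerate, of hypersurface type, transverse to the Levi kernel, has binary quartic zero or with a root of multiplicity four, and satisfies \eqref{eq:nondege-conformal}, all by Proposition~\ref{prop:necessary-cond-weyl-metr}. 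The only point needing a separate remark in this direction is that the CR connection on $\iota(\cT)$ induced by $\nabla^{[g]}$ coincides with the \emph{canonical} CR connection of Proposition~\ref{prop:Weyl-structre-cr-submanifolds}: this is checked by comparing normalizations, using that the inclusion $\cC_0\to\cP$ of \eqref{eq:inclusion-Weyl-iota} lands, after the reductions of Proposition~\ref{prop:Weyl-structre-cr-submanifolds}, in $\cP_3$, since \eqref{eq:pull-back-CR-submanifold-cT} gives $\iota^*\zeta^3=0$, hence $a_0=a_1=a_2=0$, so the defining conditions $t^*\zeta^3=a\tau^0$ (with $a=0$) and $t^*\mu_2\equiv0$ hold.

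For the reverse direction, let $N\subset\cTT$ be a CR submanifold with the listed properties. Since it is of hypersurface type and transverse to the Levi kernel $K$ — the $2$-dimensional vertical distribution of $\cTT\to\PP_+T^*M$ — one has $\dim N=5$ and $T_zN$ is complementary to $K_z$ at each point, so the projection realizes $N$ locally as the image of a section $\tilde t\colon N\to\cTT$ over an open set $N\subset\PP_+T^*M$, the CR condition being exactly that $\tilde t$ is holomorphic in the sense of Definition~\ref{def:hol-section}. Proposition~\ref{prop:Weyl-structre-cr-submanifolds} then equips $N$ with a nondegenerate CR structure of hypersurface type together with its canonical CR connection, and computes the coefficients $C_0,\dots,C_4$ of the fundamental binary quartic as in \eqref{eq:C_is-hol-sec}; in particular $C_0\equiv0$, so the fiber direction of $\PP_+T^*M\to M$ is always a principal null plane (cf.\ Remark~\ref{rmk:quartic-Lagrangian-fibration}). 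Since a binary quartic with $C_0\equiv0$ that has a root of multiplicity four must have that quadruple root along this distinguished direction, the hypothesis forces $C_1=C_2=C_3=0$, i.e.\ $\Im Y_1=\Im Y_2=0$ and $\Im X_1=\tfrac32(W^2_{221}-W^1_{112})$, while $C_4=W^0_{112;1}+W^0_{212;2}$ is unconstrained (and zero precisely in the flat sub-case).

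The core step is then to match the entries of the CR Cartan connection $\iota^*\gamma$ displayed in the proof of Proposition~\ref{prop:Weyl-structre-cr-submanifolds}, under these vanishing conditions and \eqref{eq:nondege-conformal}, with the general form \eqref{eq:sopq-su22} of the CR Cartan connection of a twistor CR manifold: one sets $\omega^0=\tau^0$, $\omega^1+\ri\omega^2=\zeta^1$, $\theta^1+\ri\theta^2=\zeta^2$, reads the remaining conformal Cartan connection $1$-forms $\theta^0,\theta^3,\xi_0,\xi_1,\xi_2$ off the other entries of $\iota^*\gamma$, and checks that $C_1=C_2=C_3=0$ makes this bundle reduction consistent while \eqref{eq:nondege-conformal} is exactly the structure equation forcing $\zeta^1$ to differentiate as the horizontal coframe of a conformal structure of signature recorded by $\ve$. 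The resulting $\mathfrak{so}(p+1,q+1)$-valued $1$-form on the relevant sub-bundle of $\cP_3$ satisfies the conformal structure equations \eqref{eq:Cartan-curvature-conformal-conn}, hence descends to a conformal structure $[g]$ on the leaf space $U$ of the fiber foliation of $N\subset\PP_+T^*M\to M$ (equivalently, by Corollary~\ref{cor:CR-type-N}, $N$ is locally the twistor CR manifold $\cT$ of $[g]$, with the vertical distribution as its multiplicity-four principal null plane). Finally, because that sub-bundle of $\cP_3$ includes into $\cP$ — the projective Cartan bundle of the \emph{given} $[\nabla]$, by the correspondence-space description of \ref{sec:corr-space-5d} — and the induced $1$-form $\iota^*\varphi$ is precisely \eqref{eq:Weyl-conformal-projective}, uniqueness of the normal projective Cartan connection yields $[\nabla]=[\nabla^{[g]}]$, i.e.\ $[\nabla]$ is Weyl metrizable with respect to $[g]$; set $\Psi(N)=[g]$.

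To finish, one checks the two maps are mutually inverse: $\Psi\circ\Phi=\operatorname{id}$ because applying $\Psi$ to $\iota(\cT)$ recovers, through Corollary~\ref{cor:CR-type-N}, the conformal structure whose twistor CR manifold is $\cT$, namely $[g]$; and $\Phi\circ\Psi=\operatorname{id}$ because $N$ and $\Phi(\Psi(N))$ are the image of the same holomorphic section $\tilde t\colon N\to\cTT$. The main obstacle is the central matching step of the previous paragraph: converting $C_1=C_2=C_3=0$ together with \eqref{eq:nondege-conformal} into the statement that the reduced bundle over $N$ carries an $\mathfrak{so}(p+1,q+1)$-valued Cartan connection of conformal type whose restriction to the ambient projective Cartan bundle reproduces \eqref{eq:Weyl-conformal-projective}. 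This is where the bulk of the structure-equation bookkeeping sits, and where the flat quartic sub-case ($C_4=0$) must be handled by importing the fiber foliation from the ambient fibration $\PP_+T^*M\to M$ rather than from an intrinsic principal-null-plane distribution on $N$.
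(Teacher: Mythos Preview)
Your overall architecture matches the paper's: necessity via Proposition~\ref{prop:necessary-cond-weyl-metr}, sufficiency by starting from Proposition~\ref{prop:Weyl-structre-cr-submanifolds}, reading off $C_0=0$, using the multiplicity-four hypothesis to force $C_1=C_2=C_3=0$, and then reducing the Cartan data to conformal form. Your reformulation of the final reduction as ``match $\iota^*\gamma$ with \eqref{eq:sopq-su22}'' rather than ``show $\iota^*\varphi$ takes the form \eqref{eq:Weyl-conformal-projective}'' is a cosmetic difference, since the two are related by the Lie algebra embedding.

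There is, however, a genuine gap in the core step. The conditions $C_1=C_2=C_3=0$ by themselves only give $\Im Y_1=\Im Y_2=0$ and $\Im X_1=\tfrac32(W^2_{221}-W^1_{112})$ from \eqref{eq:C_is-hol-sec}; they do \emph{not} directly yield the relations needed to put the connection in conformal form. The paper obtains the missing facts by differentiating: one computes Bianchi-type identities such as $\partial_{\czeta^2}Y_0=a_0Y_2$ and $\partial_{\czeta^1}\Im Y_2-\partial_{\czeta^2}\Im Y_1=-\tfrac34\ri Y_0$, and it is only these \emph{differential consequences} of $C_1=C_2=C_3=0$ (together with $Y_2\neq0$ from \eqref{eq:nondege-conformal}) that force $a_0=Y_0=Y_1=0$ and produce the algebraic relations among the $W^i_{jkl}$. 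A further step you do not mention is showing that $\Re Y_2$ is constant up to homothety (via $\exd\Re Y_2=2(\eta^0_0-\eta^1_1)\Re Y_2$, which uses $Y_0=0$); only then can one normalize $\Re Y_2=-\ve$ and pass to the sub-bundle $\cP_4\subset\cP_3$ on which $\iota^*\varphi$ is visibly of the form \eqref{eq:Weyl-conformal-projective}. Your sketch treats all of this as ``bookkeeping'', but without the $a_0=0$ and $\Re Y_2=\mathrm{const}$ steps the matching you propose cannot be carried out.

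A minor point: you invoke Corollary~\ref{cor:CR-type-N} twice, but in the paper that corollary is proved \emph{using} Theorem~\ref{thm:characterization-weyl-metr}, so citing it here is circular. Both uses are easily avoided (the parenthetical one is inessential, and $\Psi\circ\Phi=\mathrm{id}$ follows directly from \eqref{eq:pull-back-CR-submanifold-cT}), but you should remove the references.
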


\begin{proof}
  By Proposition \ref{prop:necessary-cond-weyl-metr}, we only need to prove the sufficiency  part. By the proof of Proposition \ref{prop:Weyl-structre-cr-submanifolds}, a holomorphic section $\tilde t\colon N\to \cTT, N\subset \PP_+ T^*M,$  defines a nondegenerate CR structure on $N$ with a canonical CR connection whose fundamental binary quartic has coefficients \eqref{eq:C_is-hol-sec}, together with an inclusion $\iota_{012}\colon\cP_2\to\cP$ which is a $\mathrm{CO}(2)\times\RR^+$-reduction of the Cartan bundle for the projective structure $[\nabla]$. 

The condition \eqref{eq:nondege-conformal} for the  CR connection in Proposition \ref{prop:Weyl-structre-cr-submanifolds}  gives
  \begin{equation}\label{eq:necess-cond-1}
    \exd\zeta^1\w\zeta^1\w\czeta^1=Y_2\tau^0\w\zeta^1\w\czeta^1\w\zeta^2.
      \end{equation}
Thus, by \eqref{eq:necess-cond-1}, we assume $Y_2\neq 0.$  It is a straightforward calculation to find the  differential relations
\begin{equation}
  \label{eq:Y-X-Ws}
      \begin{aligned}
      \tfrac{\partial}{\partial \czeta^1}\Im Y_2-\tfrac{\partial}{\partial \czeta^2}\Im Y_1=&-\tfrac 34 \ri Y_0, \\
      \tfrac{\partial}{\partial\czeta^2}Y_0=&a_0Y_2,\\
      4\tfrac{\partial}{\partial\czeta^1}\Im Y_1+4\tfrac{\partial}{\partial\czeta^2}\Im X_1=&W^2_{220}+W^2_{101}-2W^1_{102}+W^0_{112}Y_2\\
      &+\ri(W^2_{120}-W^1_{202}+2W^2_{201}+W^0_{212}Y_2-3X_0).
        \end{aligned}
\end{equation}
where $Y_i$'s and $X_i$'s are the functions obtained from reductions in \eqref{eq:X-Ys} and $W^i_{jkl}$'s are the entries of the projective Weyl curvature.  The condition that the fundamental binary quartic  for the CR structure, \eqref{eq:W-CR-quartic}, has a repeated root of multiplicity  four implies
  \begin{equation}\label{eq:typeN}
    C_1=C_2=C_3=0,
      \end{equation}
Combining the relation in    \eqref{eq:typeN},  \eqref{eq:C_is-hol-sec} and \eqref{eq:Y-X-Ws}, together with $\Re Y_1=0$ from \eqref{eq:P3-Y2zero}, it follows that 
      \begin{equation}\label{eq:a0-Y012-X01}
                \begin{aligned}
          a_0=&Y_0=Y_1=\Im Y_2=0,\quad \Im X_1= \tfrac 32(W^2_{221}-W^1_{112}),\\
          3X_0=& 2W^0_{221}\Re Y_2-5W^2_{120}- W^1_{202}-4 W^2_{201}+ \ri\left( 2W^0_{112}\Re Y_2+5W^2_{220}-W^2_{101}-4 W^1_{102}\right),
        \end{aligned}
              \end{equation}
holds on $\cP_2.$      The differential consequences of the relations above are   straightforward to obtain and we will only provide the final result as  algebraic relations among the entries of $W^i_{jkl}$ and  $W_{ijk}$ in \eqref{eq:proj-weyl}:
      \[
        \begin{gathered}
         W^2_{001}= (W^1_{112}-3 W^2_{221})\Re Y_2,\qquad  W^1_{002}=(W^2_{221}-3W^1_{112})\Re Y_2 ,\qquad  W^2_{020}=(W^1_{212}-W^2_{121})\Re Y_2,\\
         W^2_{220}=-\half W^0_{112}\Re Y_2+\tfrac 32 W^1_{102},\qquad  W^2_{120}=-\half W^0_{212}\Re Y_2-\tfrac 32 W^2_{201},\qquad
         W^1_{202}=-W^0_{212}\Re Y_2-4W^2_{201}\\
          W^2_{101}=W^0_{112}\Re Y_2-4W^1_{102},\qquad W_{110}=\tfrac 16(2W^0_{112;2}+3W^0_{212;1})\Re Y_2+W^2_{121;0}+\tfrac 32 W^2_{201;1}+\tfrac 83 W^2_{220;2}.
        \end{gathered}
      \]
As a result,  the expression  for $X_0$ in ~\eqref{eq:a0-Y012-X01} simplifies to
      \[X_0=\tfrac 52W^2_{201}+\half W^0_{212}\Re Y_2+\ri( \tfrac 52 W^1_{102}-\half W^0_{112}\Re Y_2).\]
      Lastly,  using  $\exd^2\tau^1=\exd^2\tau^2=0,$ one obtains
      \begin{equation}
        \label{eq:d-RY2}
        \exd\Re Y_2=2\Re Y_2(\eta^0_0-\eta^1_1)+\Re Y_{2;0}\omega^0.
            \end{equation}
 Moreover, differential relations between coframe derivatives of $Y_i$'s yield  $\Re Y_{0;3}=\Re Y_{2;0}.$ Since    \eqref{eq:a0-Y012-X01} gives  $Y_0=0$, it follows from \eqref{eq:d-RY2} that $\exd\Re Y_2=2\Re Y_2(\eta^0_0-\eta^1_1),$ which means that  $\Re Y_2$ is a constant up to positive homotheties, i.e.
      \[\exd \Re Y_2=2(\eta^0_0-\eta^1_1)\Re Y_2\Longleftrightarrow \Re Y_2({\bg}^{-1} p)=\bb_1^{-2}\bb_0^2\Re Y_2(p). \] 
      Since by \eqref{eq:necess-cond-1} it is assumed that $Y_2\neq 0,$ there is a natural sub-bundle $\iota_3\colon\cP_3\to\cP_2$ defined as
\[\cP_3=\left\{p\in\cP_2\ \vline\ \Re Y_2(p)=-\ve\right\}\]
where $\ve$ can be $+1$ or $-1$ and \eqref{eq:necess-cond-1} takes the form \eqref{eq:nondege-conformal} since $\Im Y_2=0$ by \eqref{eq:a0-Y012-X01}. As a result, $\cP_3\to N$ is a   principal $\mathrm{CO}(2,\RR)$-bundle. Defining $\iota:=\iota_0\circ\iota_1\circ\iota_2\circ\iota_3\colon\cP_3\to\cP,$ one has
\begin{equation}\label{eq:reduced-projective-conn-conformal}
  \def\arraystretch{1.3}
\iota^*\varphi=  \begin{pmatrix}
    -3\eta_0^0  & \sQ_{2i}\tau^i & \sQ_{1i}\tau^i & \sQ_{0i}\tau^i\\
    \tau^2 & \eta_0^0 & -\eta^1_2 & -\ve\tau^4\\
    \tau^1 & \eta^1_2 & \eta_0^0  & -\ve\tau^3\\
    \tau^0 & \tau^4 & \tau^3 & \eta_0^0
  \end{pmatrix},
\end{equation}
which is obtained from the Cartan connection  $\varphi$ in  \eqref{eq:para-CR-CartanCurv} for the initial projective structure $[\nabla]$  by applying admissible gauge transformations, assuming \eqref{eq:typeN} and $Y_2\neq 0$, wherein
\[
  \begin{gathered}
        \sQ_{20}=\half W^0_{112}\ve+\tfrac 52 W^1_{102},\quad  \sQ_{21}=-W^1_{112}+2W^2_{221},\quad \sQ_{22}=\Re X_1+\half (W^1_{212}-W^2_{121}),\\
    \sQ_{10}=-\half W^0_{212}\ve +\tfrac 52 W^2_{201},\quad \sQ_{11}=\Re X_1+\half (W^2_{121}-W^1_{212}),\quad \sQ_{12}=2W^1_{112}-W^2_{221},\\
    \sQ_{00}=\ve\Re X_1+\half \ve(W^1_{212}+W^2_{121}),\quad  \sQ_{01}=-\half (W^0_{212}\ve+ W^2_{201}),\quad \sQ_{02}=\half (W^0_{112}\ve -W^1_{102}).
    \end{gathered}
  \]
  Comparing \eqref{eq:reduced-projective-conn-conformal} and  \eqref{eq:Weyl-conformal-projective}, it is clear that $\iota^*\varphi$ is a conformal connection for a conformal structure  on $M$ for which ~\eqref{eq:conformal-metric} is 
  \[g=(\tau^0)^2+\ve(\tau^1)^2+\ve(\tau^2)^2.\]

  By  the homogeneity of    $\iota^*\varphi$ in \eqref{eq:reduced-projective-conn-conformal} when pulled-back to the leaves of $\{\tau^0,\tau^1,\tau^2\},$ it is clear that the (open) fibers of  $\cP_3\to M,$ can be completed to   $\mathrm{CO}^o(p,q).$   Since the fibers of $\cP_3\to N, N\subset \PP_+ T^*M,$   are $\mathrm{CO}(2,\RR),$ it follows that  the (open) fibers of $N\to M$ can be completed to  $\mathrm{CO}^o(p,q)\slash\mathrm{CO}(2,\RR)$ as discussed in \ref{sec:lebr-feff-constr}, where  $\ve=1$ or $-1$ if  $(p,q)=(3,0)$ or $(2,1)$, respectively. 
\end{proof}

More generally, relaxing condition \eqref{eq:nondege-conformal} gives the following.

\begin{corollary}\label{cor:CR-type-N}
  Given an oriented projective structure $[\nabla]$ on a 3-manifold $M,$   suppose $N\subset \PP_+T^*M$ is a  Levi nondegenerate CR submanifold  of hypersurface type that is transverse to the Levi kernel of $\cTT$ whose fundamental binary quartic is either zero or has a repeated root of multiplicity. Then either $N$ corresponds to a conformal connection or  it  satisfies     \eqref{eq:nondege-conformal} for $\ve=0$ in which case $\cTT$  is locally  fibered over a complex curve and a distinguished  line field is induced on $M.$   Assuming analyticity, the local generality of such nondegenerate CR 5-manifolds for which $\ve=0$ is 4 functions of 3 variables. 
\end{corollary}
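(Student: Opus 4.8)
The strategy is to push the argument in the proof of Theorem \ref{thm:characterization-weyl-metr} one step beyond the point where the normalization $\Re Y_2=-\ve$ was made. Since $\dim\PP_+T^*M=5$, a Levi nondegenerate CR submanifold of hypersurface type transverse to the Levi kernel of $\cTT$ is locally the image of a section $\tilde t\colon N\to\cTT$ with $N\subset\PP_+T^*M$ open, and being a CR submanifold forces $\tilde t$ to be holomorphic in the sense of Definition \ref{def:hol-section}; Proposition \ref{prop:Weyl-structre-cr-submanifolds} then endows $N$ with its canonical CR connection and the fundamental quartic \eqref{eq:C_is-hol-sec}. Because $C_0=0$, the hypothesis that the quartic vanishes or has a quadruple root is equivalent to $C_1=C_2=C_3=0$, the repeated root being automatically at $[a_1:a_2]=[1:0]$, i.e.\ along the fibres of $\PP_+T^*M\to M$ (cf.\ Remark \ref{rmk:quartic-Lagrangian-fibration}). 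Running the computation of the proof of Theorem \ref{thm:characterization-weyl-metr} verbatim through the identity $\exd\Re Y_2=2(\eta^0_0-\eta^1_1)\Re Y_2$, I obtain that on a connected component $\Re Y_2$ is either nowhere zero or identically zero. In the first case $\Re Y_2$ normalizes to $-\ve\in\{1,-1\}$ and the remainder of the proof of Theorem \ref{thm:characterization-weyl-metr} produces the reduced projective connection \eqref{eq:reduced-projective-conn-conformal}, exhibiting $N$ as corresponding to a conformal connection, i.e.\ to a conformal structure with respect to which $[\nabla]$ is Weyl metrizable; this is the first alternative.

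In the second case \eqref{eq:a0-Y012-X01} collapses to $a_0=Y_0=Y_1=Y_2=0$, so \eqref{eq:necess-cond-1} becomes $\exd\zeta^1\w\zeta^1\w\czeta^1=0$, which is precisely \eqref{eq:nondege-conformal} with $\ve=0$, and moreover $\zeta^3=0$ and $\mu_3=\mu_4=0$ on the reduced bundle. From $\exd\zeta^1\equiv 0\mod\{\zeta^1,\czeta^1\}$ together with its conjugate, the real rank-two Pfaffian system $\{\tau^1,\tau^2\}=\{\zeta^1,\czeta^1\}$ is Frobenius integrable on $N$; let $\Sigma$ be its (local) leaf space, a surface. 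Tracking the reduced structure equations one checks that $\zeta^1$ descends to $\Sigma$ up to scale, so $\Sigma$ inherits a complex structure and $N$, hence $\cTT$ over $N$, is locally fibered over the complex curve $\Sigma$. Since the fibres of $\PP_+T^*M\to M$ are the leaves of $\{\tau^0,\tau^1,\tau^2\}$ and hence are contained in the three-dimensional leaves of $\{\tau^1,\tau^2\}$, this foliation projects to a rank-one foliation of $M$, i.e.\ a distinguished line field, which gives the second alternative.

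For the generality statement I would package the $\ve=0$ structures as integral manifolds of an exterior differential system on the corresponding reduced bundle over $M$: one closes up the ideal generated by the coframe together with the relations defining the reduced bundle, the quartic conditions $C_1=C_2=C_3=0$, $\Re Y_2=0$, the $\ve=0$ specializations of the algebraic normalizations of the projective Weyl curvature $W^i_{jkl}$ and of $W_{ijk}$ obtained in the proof of Theorem \ref{thm:characterization-weyl-metr} (in which all $\Re Y_2$-terms drop out), and the projective Bianchi identities following from \eqref{eq:proj-weyl}. After a single prolongation one forms the tableau of the coframe derivatives of the surviving curvature functions and applies Cartan's test; the expected outcome is that the system is involutive with reduced Cartan characters whose last nonzero value is $s_3=4$, giving local generality ``$4$ functions of $3$ variables'' by the Cartan--K\"ahler theorem. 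The main obstacle is exactly this last computation: the collection of derived algebraic and differential relations among the $W$'s is sizeable, and showing that the tableau is involutive with $s_3=4$ — equivalently, identifying precisely which coframe derivatives of the surviving curvature components remain free in three independent directions and checking that no further integrability conditions surface under prolongation — is where the substantive work lies.
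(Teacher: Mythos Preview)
Your overall strategy matches the paper's: split on whether $\Re Y_2$ is nonvanishing or identically zero, invoke Theorem~\ref{thm:characterization-weyl-metr} in the first case, and in the second case use $Y_2=0$ together with~\eqref{eq:necess-cond-1} to get integrability of $\{\zeta^1,\czeta^1\}=\{\tau^1,\tau^2\}$, hence the complex curve quotient and the induced line field on $M$. That part is correct and essentially identical to the paper's argument.

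There is, however, a genuine error in your treatment of the $\ve=0$ case. You assert that ``\eqref{eq:a0-Y012-X01} collapses to $a_0=Y_0=Y_1=Y_2=0$'' and hence $\zeta^3=0$ and $\mu_3=\mu_4=0$ on the reduced bundle. This is not right: the conclusion $a_0=0$ in~\eqref{eq:a0-Y012-X01} came from the relation $\tfrac{\partial}{\partial\czeta^2}Y_0=a_0Y_2$ together with $Y_0=0$, which forces $a_0=0$ \emph{only when $Y_2\neq 0$}. When $Y_2=0$ this relation is vacuous and $a_0$ survives as a nontrivial structure function; in particular $\iota^*\zeta^3=a_0\tau^0$ need not vanish, and the expression $\iota_{01}^*\mu_{34}=Y_0\tau^0+Y_1\zeta^1+Y_2\zeta^2+\tfrac12 a_0\czeta^1$ reduces to $\tfrac12 a_0\czeta^1$, not zero. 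The paper explicitly flags this: the computations proceed as in Theorem~\ref{thm:characterization-weyl-metr} ``with the difference that in~\eqref{eq:a0-Y012-X01} in general one does not have $a_0=0$.''

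This does not affect the integrability of $\{\tau^1,\tau^2\}$ (which follows from $Y_2=0$ alone via~\eqref{eq:necess-cond-1}), nor the existence of the complex curve quotient and the line field, so your qualitative conclusions for the $\ve=0$ alternative stand. But it does affect the exterior differential system you would feed into Cartan--K\"ahler: the surviving function $a_0$ and its coframe derivatives enter the tableau, and the algebraic relations among the $W^i_{jkl}$ and $W_{ijk}$ are not simply those of Theorem~\ref{thm:characterization-weyl-metr} with $\Re Y_2$ set to zero. Your sketch of the involutivity check therefore starts from an incorrect set of generators, and the claim that the last nonzero character is $s_3=4$ would need to be re-derived with $a_0$ present.
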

\begin{proof}
  By Theorem  \ref{thm:characterization-weyl-metr}, when $\ve=\pm 1$ the nondegenerate CR structure on  $N$ corresponds to a conformal connection. When $\ve=0,$ then, by \eqref{eq:nondege-conformal}, the Pfaffian system $\cF=\{\zeta^1,\czeta^1\}=\{\tau^1,\tau^2\}$ is integrable. The leaf space of the 3-dimensional leaves  of $\cF$ is a complex curve equipped with the coframing $(\zeta^1,\czeta^1).$  One can check that the Pfaffian system $\cF$ is well-defined on $M$ since it is preserved by the action of the fibers of $\pi_3\colon\cP_3\to M$ as defined in \eqref{eq:P3-Y2zero} i.e. for any vertical vector field $X$ in  $\pi_3:\cP_3\to M$ the Lie derivative of $\cF$ satisfies $\cL_X\cF=\cF.$ Thus,  $\Ker\cF$ defines a line field $\ell\subset TM$ where $\ell=\langle(\pi_3)_*\tfrac{\partial}{\partial\tau^0}\rangle.$ In fact, one has $\Ker\cF=\ell\oplus\scV$ where $\scV$ is the rank 2 vertical distribution for $N\to M.$ Carrying out the computations to find the resulting structure equations and algebraic relations among the components of the projective Weyl curvature, which are tedious but straightforward, goes similarly to that  of Theorem \ref{thm:characterization-weyl-metr} with the difference that in \eqref{eq:a0-Y012-X01} in general one does not have $a_0=0.$ We will not provide the details here.  Doing so will allow one to use Cartan-K\"ahler machinery and obtain the local generality of such CR structures to be 4 functions of 3 variables. 
\end{proof}

\begin{remark}
  The class of Levi nondegenerate CR structures in Corollary \ref{cor:CR-type-N}  satisfy \eqref{eq:nondege-conformal} for $\ve=1,-1$ or $0.$ If $\ve=\pm 1,$ then they are foliated by null Lagrangian surfaces and correspond to conformal connections and, by our discussion in \ref{sec:introduction}, their local generality depends on 5 functions of 3 variables. If $\ve=0,$ then they are locally foliated by 3-dimensional leaves.   One can analyze the structure of the 3-dimensional leaves of the latter class of CR structures. To do so,  one sets $\tau^1=\tau^2=0$ in the structure equations and obtains that  the 1-form $\tau^0$ is integrable on each leaf. The respective leaves of $\{\tau^0,\tau^1,\tau^2\}$ were discussed  in \ref{sec:cr-structures-type-begin} as holomorphic curves that are null and Legendrian, c.f. \cite{Bryant-HolomorphicCurves}.  Inline with the result of \cite{Lebrun-FoliatedCR} on foliated CR manifolds with compact complex manifolds, one can ask whether there is an example for which all such 3-dimensional leaves are compact. 
\end{remark}

\section{Conformal Beltrami theorem and the Einstein-Weyl condition}  \label{sec:two-corollaries}
This section is about two corollaries from  ~\ref{sec:weyl-metr-as}. The first one is the conformal Beltrami theorem and the second one is a CR characterization of the Einstein-Weyl condition. 
\subsection{Conformal rigidity of flat projective structures}
We first prove the following corollary which can be taken as  a conformal analogue of Beltrami's theorem in dimension three.
\begin{corollary}\label{cor:conf-rigid-flat}
A 3-dimensional conformal structure  locally has a projectively flat conformal connection if and only if it is locally conformally flat. 
\end{corollary}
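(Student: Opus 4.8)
The plan is to read the statement off the CR picture of \S\ref{sec:weyl-metr-as}, using the fundamental binary quartic of the CR submanifold $\PP_+T^*M\hookrightarrow\cTT$ as the bridge between projective flatness of the connection and conformal flatness of $[g]$. The ``if'' direction is immediate: working locally, a flat conformal structure has a flat representative metric $g$, and its Levi--Civita connection $\nabla^g$ is a closed conformal connection for $[g]$ in the sense of Remark \ref{rmk:closed-weyl-structure}; being flat, $\nabla^g$ is a fortiori projectively flat.

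For the converse, assume $\nabla^{[g]}$ is a projectively flat conformal connection for $[g]$, so that $[\nabla^{[g]}]$ is the flat projective structure on $M$ and its projective Weyl curvature vanishes identically, $W^i_{jkl}\equiv 0$. Since $\nabla^{[g]}\in[\nabla^{[g]}]$ preserves $[g]$, Proposition \ref{prop:necessary-cond-weyl-metr} applies with $[\nabla]=[\nabla^{[g]}]$ and realizes $\PP_+T^*M$ as a Levi nondegenerate CR submanifold of $\cTT$, transverse to the Levi kernel and of hypersurface type, whose induced CR structure coincides with the twistor CR structure $\cT$ of $[g]$ by \eqref{eq:pull-back-CR-submanifold-cT}, and whose fundamental binary quartic is zero or has a repeated root of multiplicity four. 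By Proposition \ref{prop:Weyl-structre-cr-submanifolds} the coefficients of that quartic are given by \eqref{eq:C_is-hol-sec}; one always has $C_0=0$, and the vanishing $W^i_{jkl}\equiv 0$ forces the torsion entries $W^0_{112},W^0_{212}$ of \eqref{eq:torion-T2} together with all their coframe derivatives to vanish, so that $C_4=W^0_{112;1}+W^0_{212;2}=0$. A binary quartic with $C_0=C_4=0$ has the two distinct roots $[1\colon 0]$ and $[0\colon 1]$, hence cannot possess a single root of multiplicity four; since it is zero or has such a root, it must be identically zero, i.e.\ $C_1=C_2=C_3=0$ as well. By Theorem \ref{thm:CR-equiv-problem} the CR structure on $\PP_+T^*M$ is then flat, and since, by Proposition \ref{prop:algebr-type-CR-twist}, the twistor CR structure of $[g]$ is flat if and only if $[g]$ is flat, we conclude that $[g]$ is flat.

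The computation is short once Propositions \ref{prop:necessary-cond-weyl-metr}--\ref{prop:Weyl-structre-cr-submanifolds} are in hand; the only delicate point is the bookkeeping that the CR structure induced on the submanifold $\PP_+T^*M$ is genuinely the twistor CR structure of $[g]$, so that its fundamental quartic admits the two descriptions \eqref{eq:Cis-quartic} and \eqref{eq:C_is-hol-sec} — coming respectively from the fibration $\cT\to M$ and from the embedding $\cT\hookrightarrow\cTT$ — which are being played against each other. An alternative, purely computational route pulls the projective Cartan curvature $\bvarphi$ of \eqref{eq:para-CR-CartanCurv} back along \eqref{eq:Weyl-conformal-projective}, expresses $W^i_{jkl}$ in terms of the Schouten tensor $\sP_{ij}$ and the Cotton--York tensor $Y_{ijk}$ of $[g]$, and verifies directly that $W^i_{jkl}\equiv 0$ implies $Y_{ijk}\equiv 0$; this is the $n=3$ instance of Theorem \ref{thm:conf-beltrami} below, but we favour the CR argument here because it exhibits the underlying mechanism, namely the forced degeneration of the fundamental binary quartic.
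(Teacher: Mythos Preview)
Your proof is correct and follows essentially the same CR-theoretic route as the paper: from \eqref{eq:C_is-hol-sec} projective flatness gives $C_0=C_4=0$, hence two distinct roots, which is incompatible with the ``zero or quadruple root'' constraint of Proposition~\ref{prop:necessary-cond-weyl-metr} unless the quartic vanishes identically, whence conformal flatness by Proposition~\ref{prop:algebr-type-CR-twist}. The paper invokes Theorem~\ref{thm:characterization-weyl-metr} rather than Proposition~\ref{prop:necessary-cond-weyl-metr} for the root-type constraint, but the content is the same; your closing remark about the alternative direct computation via \eqref{eq:Weyl-conformal-projective} and the Schouten tensor is also exactly the paper's second argument.
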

\begin{proof}
From  \eqref{eq:C_is-hol-sec} it follows  that if the projective structure is flat, then the CR structure defined by a holomorphic section  transverse to the Levi kernel of  $\cTT$ satisfies $C_0=C_4=0,$ i.e.  the holomorphic binary quartic \eqref{eq:W-CR-quartic} has a root at infinity and at zero. However, by  Theorem \ref{thm:characterization-weyl-metr}, for holomorphic sections that are defined by  conformal connections the binary quartic is either zero or has a repeated root of multiplicity four. Thus,  the binary quartic of a projectively flat conformal connection is identically zero which, by Proposition \ref{prop:algebr-type-CR-twist}, implies conformal flatness. 

  Alternatively, one can give a more direct proof of the first part as follows. Given a conformal connection $\nabla^{{[g]}},$ associated to $s\colon\cC_0\to\cC,$ one has \eqref{eq:rho-tensor-conformal} and \eqref{eq:Weyl-conformal-projective}, using which the projective Weyl curvature of $[\nabla^{[g]}]$ can be expressed in terms of $\sP,$ e.g.
  \begin{equation}\label{eq:proj-Weyl-rho}
    \begin{gathered}
          W^0_{112} = \ve\sP_{02},\quad  W^0_{212} = -\ve\sP_{01},\quad  W^1_{002} = \ve\sP_{12},\quad 
          W^1_{202} = -\sP_{10},\\
            W^1_{212} = \half(\ve\sP_{00}-\sP_{11}),\quad  W^2_{001} = \ve\sP_{21},\quad  W^2_{101} = -\sP_{20},\quad  W^2_{121} = \half(\ve\sP_{00}-\sP_{22}),\\
    \end{gathered}
  \end{equation}
where $\ve=1$ or $-1$ in the Riemannian or Lorentzian signature, respectively.    Projective flatness is equivalent to $W^i_{jkl}=0,$ which by \eqref{eq:proj-Weyl-rho} implies
  \begin{equation}\label{eq:rho-constant}
  \sP_{ij}=\sP_{00}\ve_{ij}.  
  \end{equation}
By Remark \ref{rmk:closed-weyl-structure} one obtains that  $\nabla^{[g]}$ is locally the Levi-Civita connection of a metric  $g_0\in [g],$ defined up to positive homotheties,  whose Schouten tensor satisfies \eqref{eq:rho-constant} and, therefore, by Schur's lemma,  $\sP_{00}$ is a constant defined up to positive homotheties, i.e. $g_0$ is Einstein.  Moreover,  the entries of the Cotton-York tensor, denoted as $Y_{ijk}$ in \eqref{eq:cotton-york}, are determined by the first jet of the Rho tensor $\sP_{ij}$ for any conformal connection via the relation
  \begin{equation}\label{eq:cotton-york-rho-tensor}
    Y_{ijk}=\sP_{ik;j}-\sP_{ij;k}.
  \end{equation}
  As a result, \eqref{eq:rho-constant} implies  $Y_{ijk}=0,$ i.e. the conformal structure is flat.


\end{proof}

The conformal geometry in dimension three is  special since the Weyl tensor for conformal structures in this dimension  is zero and the fundamental invariant is the Cotton-York tensor which has higher homogeneity. Consequently, the Cotton-York tensor is  determined by the Rho tensor, as given in \eqref{eq:cotton-york-rho-tensor}. Moreover, it follows  from \eqref{eq:proj-Weyl-rho} that  the projective Weyl curvature determines the trace-free part of the Rho tensor of the conformal connection.

Inspecting  Corollary \ref{cor:conf-rigid-flat} in  higher dimensions gives the following. 

\begin{theorem}\label{thm:conf-beltrami}
  A  conformal structure in dimension $\geq 3$  locally has a projectively flat conformal connection  if and only if it is locally conformally  flat.  
\end{theorem}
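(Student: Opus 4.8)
The plan is to reduce to $n:=\dim M\geq 4$, since $n=3$ is precisely Corollary~\ref{cor:conf-rigid-flat}. The ``if'' direction is immediate: a flat conformal structure contains, locally, a flat (or, by Beltrami's theorem, a constant--curvature) representative $g_0$, and its Levi--Civita connection $\nabla^{g_0}$ is a closed Weyl structure for $[g]$ that is projectively flat. So the real content is the ``only if'' direction: for $n\geq 4$, a projectively flat conformal connection $\nabla^{[g]}$ forces the conformal Weyl tensor $C$ of $[g]$ to vanish, which by the Weyl--Schouten theorem means $[g]$ is flat.

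For this I would fix a representative $g\in[g]$ with Weyl $1$--form $\beta$, so $\nabla^{[g]}=\nabla^{g}+S_\beta$ with $S_\beta{}^i_{jk}=\delta^i_j\beta_k+\delta^i_k\beta_j-g_{jk}\beta^i$ and Faraday $2$--form $F=\exd\beta$. Projective flatness, $W^i_{jkl}=0$, means by the very definition of the projective Weyl curvature that the curvature of $\nabla^{[g]}$ has the reduced form $R^i_{jkl}=\delta^i_k\sP_{jl}-\delta^i_l\sP_{jk}+\delta^i_j\,G_{kl}$, where $\sP$ is the projective Schouten (Rho) tensor of $\nabla^{[g]}$ and $G_{kl}$ is a skew $2$--tensor (a universal multiple of $F_{kl}$). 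Lowering the first index with $g$, this reads $R_{ijkl}=g_{ik}\sP_{jl}-g_{il}\sP_{jk}+g_{ij}G_{kl}$, so every component of $R_{ijkl}$ carries a factor $g_{\bullet\bullet}$ pairing two of its four indices.

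Next I would exploit the defining property $\nabla^{[g]}g=2\beta\otimes g$ at the level of curvature, which gives $R_{(ij)kl}=c\,F_{kl}\,g_{ij}$ for a universal constant $c$. Matching this with the reduced form and tracing once with $g^{ik}$ forces the symmetric part of $\sP$ to be pure trace, $\sP_{(ij)}=\tfrac1n(\tr_g\sP)\,g_{ij}$; substituting this back exhibits $R_{ijkl}$ as a constant--curvature term $\lambda(g_{ik}g_{jl}-g_{il}g_{jk})$ plus further terms, each of which again carries a $g_{\bullet\bullet}$ factor. Consequently the totally trace--free part of $R_{ijkl}$ vanishes. Since for $n\geq 4$ the conformal Weyl tensor $C$ of $[g]$ is the totally trace--free component of the curvature of any Weyl connection (see \cite{CS-Parabolic}), we conclude $C=0$, hence $[g]$ is flat. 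One may equivalently run this in the Cartan--geometric normalisation of \S\ref{sec:proj-class-weyl}: the pull--back \eqref{eq:Weyl-conformal-projective} of the projective Cartan connection expresses $W^i_{jkl}$ through $\sP$ and $F$, and setting $W^i_{jkl}=0$ returns precisely the vanishing of the conformal harmonic curvature.

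The main obstacle is keeping straight the relevant $\mathrm{O}(p,q)$--module decomposition: the curvature of a Weyl connection is not a Riemann--type tensor (it lacks the pair symmetry and has nonzero symmetric part proportional to $g_{ij}F_{kl}$), so the argument must be carried out in the larger module of affine curvature tensors, and one needs that the conformal--Weyl summand is exactly the totally trace--free, Bianchi part — so that discarding all ``$g$--factor'' terms isolates $C$. Granting this, the remaining computation is the short trace manipulation above, parallel in spirit to the passage from \eqref{eq:proj-Weyl-rho} to \eqref{eq:rho-constant} in the proof of Corollary~\ref{cor:conf-rigid-flat}.
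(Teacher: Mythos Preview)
Your proposal is correct and follows the same underlying strategy as the paper: reduce to $n\geq 4$ via Corollary~\ref{cor:conf-rigid-flat}, then compare the projective and conformal curvature decompositions of the Weyl connection's curvature to show that $W^i_{jkl}=0$ forces $C^i_{jkl}=0$. The difference is largely one of packaging. The paper works inside the Cartan-geometric framework: it equates the second structure equations \eqref{eq:conformal-str-dim-n} and \eqref{eq:projective-str-dim-n}, solves for $\sQ_{ij}$ in terms of $\sP_{ij}$, and then isolates the specific trace $W_{ik}:=\ve^{jl}W_{(ij)kl}$, obtaining the closed formula \eqref{eq:W-C-P}. Setting $W_{ik}=0$ kills both the symmetric and antisymmetric parts of $\sP$ minus its trace, so $\sP_{ij}=c\,\ve_{ij}$ (not merely $\sP_{(ij)}$ pure trace); substituting back into \eqref{eq:C-W-Q-P} then gives $C_{ijkl}=0$ directly. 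Your route is a bit more conceptual: once $R_{ijkl}=g_{ik}\sP_{jl}-g_{il}\sP_{jk}+g_{ij}G_{kl}$, every summand already carries a $g$-factor, so the totally trace-free $\mathrm{O}(p,q)$-component vanishes without first needing $\sP_{(ij)}$ to be pure trace—your step 7 is in fact redundant for the conclusion $C=0$, though harmless. What the paper's explicit trace computation buys is the sharper intermediate statement that $\nabla^{[g]}$ is locally the Levi--Civita connection of an Einstein representative (cf.\ Remark~\ref{rmk:closed-weyl-structure}), which feeds into Remark~\ref{rmk:equivalent-description}; your argument yields flatness of $[g]$ but does not isolate this along the way. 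The point you flag as the ``main obstacle''—that the conformal Weyl tensor is precisely the trace-free Riemann-symmetric piece of the curvature of \emph{any} Weyl connection—is indeed the crux, and is exactly what the paper's identity \eqref{eq:C-W-Q-P} encodes.
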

\begin{proof}
 Using Corollary \ref{cor:conf-rigid-flat}, we only need to prove the theorem in dimensions $\geq 4.$ 
  In this proof, unlike the rest of the paper, the range of indices is
  \[0\leq i,j,k,l,m\leq n-1,\]
  and $[\ve_{ij}]$ is the standard diagonal matrix of signature $(p,q)$ and $p+q=n.$ By our discussion in \ref{sec:conf-geom-dimens},   a (pseudo-)conformal structure of signature $(p,q)$ on an $n$-dimensional manifold $M$ corresponds to a  Cartan geometry  $(\cC\to M,\psi)$ of type $(\mathrm{SO}(p+1,q+1),Q_1).$   The structure equations on $\cC$ is known to be, e.g. see \cite[Chapter 4]{Kobayashi},
  \begin{equation}\label{eq:conformal-str-dim-n}
    \begin{aligned}
      \exd\omega^i=&-\theta^i_j\w\omega^j,\\      \exd\theta^i_j=&-\theta^i_k\w\theta^k_j-\omega^i\w\xi_j-\ve^{ki}\ve_{jl}\xi_k\w\omega^l+\delta^i_j\xi_k\w\omega^k+ \half C^i_{jkl}\omega^k\w\omega^l,\\
      \exd\xi_i=&\phantom{-}\theta^j_i\w\xi_j+\half C_{ijk}\omega^j\w\omega^k.
    \end{aligned}
  \end{equation}
   The conformal structure on $M$ is $[t^*g]\subset \mathrm{Sym}^2(T^*M),$ for any section $t\colon M\to\cC$, where  $g=\ve_{ij}\omega^i\omega^j\in \Gamma(\mathrm{Sym^2}(T^*\cC))$   Furthermore, $[\theta^i_j]$ is  $\mathfrak{co}(p,q)$-valued and the Weyl curvature entries $C^i_{jkl}$ satisfy
  \[C^i_{jkl}=-C^i_{jlk},\quad C^i_{[jkl]}=0,\quad C^i_{jil}=0.\]
  A Weyl structure $\nabla^{[g]}$ is equivalent to a section $s\colon\cC_0\to\cC,$ where $\cC_0\to M$ is a principal $\mathrm{CO}(p,q)$-bundle, using which one has 
  \[s^*\xi_i=\sP_{ij}\omega^j.\]
  Similarly, by our discussion in \ref{sec:proj-struct-dimens}, a projective structure $[\nabla]$ on an $n$-dimensional manifold is a Cartan geometry $(\cP\to M,\varphi)$ of type $(\mathrm{SL}(n+1,\RR),P_1)$ for which the structure equations    are, e.g. see \cite[Chapter 4]{Kobayashi},
  \begin{equation}\label{eq:projective-str-dim-n}
    \begin{aligned}
      \exd\tau^i=&-\omega^i_j\w\tau^j,\\      \exd\omega^i_j=&-\omega^i_k\w\omega^k_j-\tau^i\w\mu_j+\delta^i_j\mu_k\w\tau^k+ \half W^i_{jkl}\tau^k\w\tau^l,\\
      \exd\mu_i=&\phantom{-}\omega^j_i\w\mu_j+\half W_{ijk}\tau^j\w\tau^k,
    \end{aligned}
  \end{equation}
where $[\omega^i_j]$ is $\mathfrak{gl}(n)$-valued.  We point out that by comparing the expression for $\exd\tau^i$  using the matrix form \eqref{eq:3Dproj-CartanCurv} and using  \eqref{eq:projective-str-dim-n}, it follows that  the 1-forms $\omega^i_j$ above and $\eta^i_j$ in \eqref{eq:3Dproj-CartanCurv} are related by $\omega^i_j=\eta^i_j+\delta^i_j\eta^k_k.$ The entries of the projective Weyl curvature $W^i_{jkl}$ satisfy
\[W^i_{jkl}=-W^i_{jlk},\quad W^i_{[jkl]}=0,\quad W^i_{jik}=0.\]
Viewing the conformal connection $\nabla^{[g]}$ as a projective connection for $[\nabla^{[g]}],$ it defines a section $u\colon\cP_0\to \cP,$ where
\[u^*\mu_i=\sQ_{ij}\omega^j.\]
The objective is to express projective invariants  $W^i_{ijk}$ and $\sQ_{ij}$ in terms of conformal invariants $C^i_{jkl}$ and $\sP_{ij}.$ Firstly, note that using the natural inclusion $\iota\colon \cC_0\to\cP_0,$ one has
\[\iota^*\tau^i=\omega^i,\quad \iota^*\omega^i_{j}=\theta^i_j.\]
Consequently, after their pull-back to $\cC_0$, it follows from equating the right hand side of  the second structure equation in \eqref{eq:conformal-str-dim-n} and \eqref{eq:projective-str-dim-n}  that
\begin{equation}\label{eq:C-W-Q-P}  (C_{ijkl}+\ve_{il}\sP_{jk}-\ve_{jl}\sP_{ik}+\ve_{ij}\sP_{lk})\omega^k\w\omega^l=(W_{ijkl}-\ve_{ik}\sQ_{jl}+ \ve_{ij}\sQ_{lk})\omega^k\w\omega^l,
  \end{equation}
where we have used the conformal structure $[g]$ to lower indices, i.e.
\[C_{ijkl}=\ve_{im}C^m_{jkl},\quad W_{ijkl}=\ve_{im}W^m_{jkl}.\]
Since  $W^i_{jil}=C^i_{jil}=0,$ contracting  \eqref{eq:C-W-Q-P} with $\ve^{ik}$ and denoting $\sP=\ve^{ij}\sP_{ij},$ one obtains
\[\sP_{lj}-(n-1)\sP_{jl}-\sP\ve_{jl}=\sQ_{lj}-n\sQ_{jl}\Rightarrow \sQ_{ij}=\tfrac{1}{n-1}\sP\ve_{ij}+\tfrac{n^2-n-1}{n^2-1}\sP_{ij}-\tfrac{1}{n^2-1}\sP_{ji}, \]
which coincides with \eqref{eq:Q-P-3} when $n=3.$ Using the relation above and \eqref{eq:C-W-Q-P}, one can express  $W_{ijkl}$ in terms of $C_{ijkl}$ and $\sP_{ij}$ and arrive at
\begin{equation}\label{eq:W-C-P}
 W_{ik}:=\ve^{jl}W_{(ij)kl}=\tfrac{2}{n+1}\sP_{[ik]}+\tfrac{n-2}{n-1}(\sP\ve_{ik}-n\sP_{(ik)})
  \end{equation}
Since $W_{ij}$ is trace-free, it follows from \eqref{eq:W-C-P} that $W_{ij}=0$ implies $\sP_{ij}=c\ve_{ij}$ for some $c,$ which, by Remark \ref{rmk:closed-weyl-structure}, means $\nabla^{[g]}$ is the Levi-Civita connection of a metric  $g\in [g],$ defined up to positive homotheties, whose Schouten tensor satisfies $\sP_{ij}=cg_{ij},$ and, therefore, by Schur's lemma, $c$ is a constant defined up to positive homotheties, i.e. $g$ is Einstein.    Since projective flatness is equivalent to $W_{ijkl}=0,$ one has $\sP_{ij}=c\ve_{ij}$ and, consequently, the relation \eqref{eq:C-W-Q-P} gives $C_{ijkl}=0,$ i.e. $[g]$ is a flat conformal structure. 
As a result, if  $[\nabla^{[g]}]$ is projectively flat, then $\nabla^{[g]}$ is the Levi-Civita connection of (pseudo-)Riemannian metrics of constant sectional curvature.

\end{proof}
\begin{remark}\label{rmk:equivalent-description}
  Equivalently, Theorem \ref{thm:conf-beltrami} states that a projectively flat conformal connection in dimension $\geq 3$  is locally equivalent to the Levi-Civita  connection of the homothety class of (pseudo-)Riemannian metrics of constant sectional curvature. This is due to the fact that in a flat  conformal structure the Levi-Civita connections of the homothety class of (pseudo-)Riemannian metrics of constant sectional curvature are equivalent and projectively flat. Therefore, using Lemma \ref{lem:proj-conf-intersect}, the theorem states that, locally in any dimension $\geq 3$, this Levi-Civita connection is the unique projectively flat conformal connection.   I am grateful to Daniel J. F. Fox who pointed out to me an alternative proof of Theorem \ref{thm:conf-beltrami}  using \cite[Lemma 3.6]{Fox} by requiring the  projectively flat  AH structures to  be self-conjugate and noting that self-conjugate AH structures are equivalent to  Weyl structures.
  \end{remark}

  \begin{remark}  \label{rmk:2D-weyl}
Although projectively flat Weyl structures are rigid in dimension $\geq 3,$ one can show, e.g. using Cartan-K\"ahler theory, that on surfaces  they locally depend on 2 functions of 1 variable. A description of  conformal connections whose geodesics are great circles on 2-sphere can be found in \cite{Mettler-1, Mettler-2}.
\end{remark}

\subsection{The Einstein-Weyl condition}   
\label{sec:einst-weyl-repr}
Recall that the Ricci curvature of a linear connection  $\nabla$ on $M$, denoted by $\mathrm{Ric}^{\nabla},$ is defined by contracting its  curvature tensor and may not be symmetric. A conformal connection $\nabla^{[g]}$  is called Einstein-Weyl if the symmetric part of its Ricci tensor, $\mathrm{Sym}(\mathrm{Ric}^{\nabla^{[g]}}),$  satisfies
\[\mathrm{Sym}(\mathrm{Ric}^{\nabla^{[g]}})=f g,\]
for some function $f$ on $M$ and  $g\in [g].$

By our discussion in \ref{sec:proj-class-weyl}, any conformal connection  corresponds to a  principal $\mathrm{CO}(p,q)$-bundle $\cC_0\to M.$ Consider the associated $\mathrm{CO}(2,\RR)$-bundle $\what\mu\colon\cC_0\to\cT.$ Using the notational conventions in  \ref{sec:conventions}, the canonical coframe on  $\cC_0$ given by $(\omega^0,\omega^1,\omega^2,\theta^0,\cdots,\theta^3)$ allows one to define a frame on $\cC_0$ denoted by $(\partial_{\omega^0},\cdots,\partial_{\theta^3}).$   We have the following lemma for the line field $\ell=\langle\what\mu_*\partial_{\omega^0}\rangle\subset T\cT.$

\begin{lemma}\label{lem:path-geometry}
Given a conformal connection $\nabla^{[g]},$   the integral curves of the line field $\ell=\langle\what\mu_*\partial_{\omega^0}\rangle$ on $\cT$ are in one-to-one correspondence with (time-like) geodesics of $\nabla^{[g]}$ on $M.$
\end{lemma}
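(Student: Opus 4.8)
The plan is to work on $\cC_0$ with its canonical coframe $(\omega^0,\omega^1,\omega^2,\theta^0,\dots,\theta^3)$ and read off the structure equations for a conformal connection $\nabla^{[g]}$ from \eqref{eq:conf-cartan-conn} together with \eqref{eq:rho-tensor-conformal}. The key observation is that the integral curves of the line field $\ell=\langle\what\mu_*\partial_{\omega^0}\rangle$ on $\cT$ are the projections of the leaves on $\cC_0$ of the Pfaffian system generated by all the semibasic $1$-forms that annihilate $\partial_{\omega^0}$, together with the connection forms that are invariant under $\mathrm{CO}(2,\RR)$; concretely the relevant system on $\cC_0$ is
\[
\cI_\ell=\{\omega^1,\omega^2,\theta^1,\theta^2\},
\]
since these are exactly the forms (among the semibasic ones for $\cC_0\to M$ and the $\mathrm{CO}(2,\RR)$-semibasic ones) that vanish on $\partial_{\omega^0}$. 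First I would check, using the structure equations $\exd\omega^i=-\omega^i_j\w\omega^j$ and the explicit shape of the Weyl connection forms, that $\cI_\ell$ is a Frobenius-integrable rank-$4$ system on $\cC_0$; its $3$-dimensional leaves foliate $\cC_0$ and, after quotienting by the $\mathrm{CO}(2,\RR)$-action of $\what\mu$, descend to the $1$-dimensional integral curves of $\ell$ on $\cT$. The mild subtlety is that $\cI_\ell$ is not $\mathrm{CO}(2,\RR)$-invariant as a set of forms but its annihilator distribution is, so one works with the distribution $\langle\partial_{\omega^0},\partial_{\theta^0},\partial_{\theta^3},\dots\rangle$ — more precisely the $\mathrm{CO}(2,\RR)$-orbit direction together with $\partial_{\omega^0}$ — and verifies that pushing forward by $\what\mu_*$ gives a well-defined line field whose integral curves are the images of the leaves of $\cI_\ell$.

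Next I would identify these leaves, pushed down to $M$ via $\mu\colon\cC_0\to M$, as the geodesics of $\nabla^{[g]}$. This is the analogue of the classical fact for the orthonormal frame bundle that the horizontal lift of a geodesic is an integral curve of the "first" horizontal vector field: the flow of $\partial_{\omega^0}$ on $\cC_0$ is a horizontal lift with respect to the Weyl connection, and along such a flow line $\omega^1=\omega^2=0$ says the base curve has velocity proportional to the first frame vector, while $\theta^1=\theta^2=0$ says the first frame vector is parallel along the curve with respect to $\nabla^{[g]}$ — i.e. the curve is an (unparametrized, or affinely parametrized) geodesic. In the Lorentzian case the normalization \eqref{eq:conformal-metric} and the fact that $\PP_+T^*M$ consists of future time-like rays force the tangent direction $\what\mu_*\partial_{\omega^0}$ to be time-like, which is why the statement refers to time-like geodesics; in the Riemannian case $\ve=1$ and every geodesic occurs. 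Conversely, given a geodesic $\gamma$ of $\nabla^{[g]}$ one builds an adapted horizontal lift to $\cC_0$ by parallel-transporting an adapted frame along $\gamma$, and this lift is easily checked to annihilate $\cI_\ell$ and to be tangent to $\partial_{\omega^0}$ modulo the $\mathrm{CO}(2,\RR)$-orbit directions; projecting to $\cT$ gives the desired integral curve of $\ell$, and the correspondence is bijective because a geodesic together with a choice of unit (time-like) tangent ray determines, and is determined by, a point of $\cT$ over each of its points — exactly the data recorded by $\cT=\PP_+T^*M$ and the identification of rays of time-like $1$-forms with time-like directions via $[g]$.

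The main obstacle, such as it is, is bookkeeping rather than conceptual: one must be careful that $\ell$ is genuinely well-defined on $\cT$ (independent of the point of the fiber $\cC_0\to\cT$ chosen to represent it), which amounts to checking that the form $\omega^0$ is semibasic for $\what\mu\colon\cC_0\to\cT$ and transforms by a nowhere-zero conformal factor under $\mathrm{CO}(2,\RR)$, so that $\partial_{\omega^0}$ is well-defined up to scale after pushforward; this is immediate from \eqref{eq:conf-cartan-conn}. The remaining work is to translate "$\omega^1=\omega^2=\theta^1=\theta^2=0$ along the flow of $\partial_{\omega^0}$" into the geodesic equation $\nabla^{[g]}_{\dot\gamma}\dot\gamma\parallel\dot\gamma$, which is routine once the Weyl connection forms are written out, and to verify the time-like condition in the Lorentzian case from the signature of $g$ in \eqref{eq:conformal-metric}. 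I expect the whole argument to be three or four lines in the Cartan-geometric language, essentially a restatement of the tautological description of geodesics via the first horizontal field of the Weyl connection, with the only genuinely new ingredient being the identification of the base of this foliation with $\cT$ rather than with $M$.
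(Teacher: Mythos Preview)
Your approach is correct and essentially self-contained, but it differs from the paper's proof. The paper does not verify Frobenius integrability of $\{\omega^1,\omega^2,\theta^1,\theta^2\}$ or invoke the classical ``first horizontal vector field'' picture. Instead it observes, using the projective connection \eqref{eq:Weyl-conformal-projective}, the bracket-generating relation
\[
\exd\omega^i\equiv \ve\,\theta^i\w\omega^0\ \ \mathrm{mod}\ \{\omega^1,\omega^2\},
\]
which says $[\ell,\scV]=T\cT$ for $\scV=\what\mu_*\Ker\{\omega^0,\omega^1,\omega^2\}$, and then appeals to the correspondence-space description of projective structures (as in \cite[Section~4.4.3]{CS-Parabolic}): this identifies $(\ell,\scV)$ as a path geometry on $\cT$ whose paths project to the geodesics of $[\nabla^{[g]}]$. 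Your route trades that citation for a direct computation and the frame-bundle interpretation of geodesics; it is longer but requires no outside machinery. One small correction: the Pfaffian system $\{\omega^1,\omega^2,\theta^1,\theta^2\}$ \emph{is} $\mathrm{CO}(2,\RR)$-invariant as an ideal (the structure group rotates $(\omega^1,\omega^2)$ and $(\theta^1,\theta^2)$ among themselves and scales), so there is no subtlety there; your well-definedness check on $\cT$ then goes through immediately.
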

\begin{proof}
Using the discussion in \ref{sec:proj-class-weyl},  recall that a conformal connection $\nabla^{[g]}$ defines the projective connection \eqref{eq:Weyl-conformal-projective}. By structure equations, the line field $\ell$ and the integrable rank 2 distribution $\scV=\langle\what\mu_*\partial_{\theta^1},\what\mu_*\partial_{\theta^2}\rangle=\what\mu_*\Ker\{\omega^0,\omega^1,\omega^2\}\subset T\cT$ satisfy the bracket generating relation $[\ell,\scV]=T\cT,$  or, equivalently, 
\[\exd\omega^i\equiv \ve\theta^i\w\omega^0\mod\{\omega^1,\omega^2\},\]
which defines the multi-contact structure on $\PP TM$ mentioned in our discussion in \ref{sec:proj-struct-dimens} on the relation between geodesics of a projective structure and the integral curves of its geodesic spray,  $\ell.$
In the correspondence space theory of projective structures, e.g. see \cite[Section 4.4.3]{CS-Parabolic},  $\cT$ is said to have  a so-called \emph{path geometry} where the paths are the integral curves of $\ell$ and project to the geodesics of $\nabla^{[g]}.$  If $[g]$ is Lorentzian then, by \eqref{eq:conformal-metric}, the integral curves of $\ell$ are the unique lift of time-like geodesics of $\nabla^{[g]}$ to $\cT.$ 
\end{proof}
Now we have the following CR characterization of the Einstein-Weyl condition.
\begin{proposition}
  A conformal connection, $\nabla^{[g]},$ is Einstein-Weyl if and only if, in terms of the coframing  \eqref{eq:pull-back-CR-submanifold-cT}, the complex Pfaffian system $\{\zeta^1,\zeta^2\}$ is integrable on $\cT$. Subsequently, the   CR complex structure on the contact distribution  $\scC\subset T\cT$ descends to the 4-dimensional space of (time-like) geodesics of $\nabla^{[g]},$  also referred to as its minitwistor space. 
\end{proposition}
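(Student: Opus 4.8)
The plan is to prove the biconditional by a structure-equation computation on $\cC_0$, and then obtain the descent statement from elementary foliation theory. Recall from \ref{sec:proj-class-weyl} that $\nabla^{[g]}$ corresponds to a section $s\colon\cC_0\to\cC$ with $s^*\xi_i=\sP_{ij}\,s^*\omega^j$, and from the proof of Proposition \ref{prop:algebr-type-CR-twist} that $\cT\cong\cC_0/\mathrm{CO}(2,\RR)$ carries the contact distribution $\scC=\Ker\{\omega^0\}$ with holomorphic distribution $\scH=\Ker\{\omega^0,\zeta^1,\zeta^2\}$, where in the coframing \eqref{eq:pull-back-CR-submanifold-cT} one has $\tau^0=\omega^0$, $\zeta^1=\omega^1+\ri\omega^2$ and $\zeta^2=\theta^1+\ri\theta^2$. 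First I would compute $\exd\zeta^1$ and $\exd\zeta^2$ on $\cC_0$ from the conformal structure equations coming from \eqref{eq:conf-cartan-conn}--\eqref{eq:Cartan-curvature-conformal-conn} together with $s^*\xi_i=\sP_{ij}\,s^*\omega^j$; the outcome is
\[ \exd\zeta^1\equiv 0 \quad\text{and}\quad \exd\zeta^2\equiv-\tfrac{1}{2}\big[(\sP_{11}-\sP_{22})+\ri(\sP_{12}+\sP_{21})\big]\,\tau^0\w\czeta^1 \quad\bigl(\mathrm{mod}\ \{\zeta^1,\zeta^2\}\bigr). \]
Hence $\exd\zeta^1$ lies in the algebraic ideal $\{\zeta^1,\zeta^2\}$ unconditionally, and the Frobenius integrability of the complex Pfaffian system $\{\zeta^1,\zeta^2\}$ on $\cT$ is equivalent to the vanishing, at every point of $\cT$, of the scalar $c:=(\sP_{11}-\sP_{22})+\ri(\sP_{12}+\sP_{21})$.

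The next step is to recognise $c$ invariantly. A point of $\cT$ is a co-oriented (time-like) direction $[p]$ at some $x\in M$, and the fibre of $\cC_0\to\cT$ over it consists of the adapted orthonormal coframes, i.e.\ those with $\omega^0$ dual to $[p]$; up to a nowhere-vanishing factor, $c$ is then the trace-free part of the restriction to $[p]^{\perp}$ of the trace-free symmetric Rho tensor $\sP^{\circ}_{ij}:=\sP_{(ij)}-\tfrac{1}{3}(\ve^{kl}\sP_{kl})\ve_{ij}$. So $c$ vanishes over all of $\cT$ iff $\sP^{\circ}$ restricts to a pure-trace form on $[p]^{\perp}$ for every admissible direction $[p]$ at every $x$; a short pointwise argument over $M$ — testing at the eigendirections of $\sP^{\circ}$ in the Riemannian case, and noting that a nonzero symmetric tensor $\sP^{\circ}_{ij}=n_in_j$ with $n$ null has nowhere pure-trace orthogonal restriction in the Lorentzian case — shows this forces $\sP^{\circ}_{ij}\equiv 0$. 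Since the trace-free symmetric part of $\sP_{ij}$ is a nonzero constant multiple of the trace-free symmetric part of $\mathrm{Ric}^{\nabla^{[g]}}$ (the relation of Remark \ref{rmk:closed-weyl-structure} applied to the symmetric part), $\sP^{\circ}\equiv 0$ is precisely the Einstein--Weyl condition $\mathrm{Sym}(\mathrm{Ric}^{\nabla^{[g]}})\in[g]$; this establishes the biconditional.

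For the descent claim, note that $\zeta^1,\zeta^2$ involve only $\omega^0,\omega^1,\omega^2,\theta^1,\theta^2$, so $\zeta^i(\partial_{\omega^0})=0$; hence the rank-$3$ complex distribution $\Ker\{\zeta^1,\zeta^2\}\subset\CC\otimes T\cT$ equals $\scH\oplus\CC\ell$, where $\ell=\langle\what\mu_*\partial_{\omega^0}\rangle$ is transverse to $\scC$ and meets $\scH$ trivially. By Lemma \ref{lem:path-geometry} the integral curves of $\ell$ are the (time-like) geodesics of $\nabla^{[g]}$, so the space $\mathcal{Z}$ of these geodesics is the local leaf space of $\ell$, with projection $q\colon\cT\to\mathcal{Z}$ satisfying $\ker q_*=\ell$ and $q_*|_{\scC}\colon\scC\to T\mathcal{Z}$ a pointwise isomorphism. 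When $\{\zeta^1,\zeta^2\}$ is integrable, $\scH\oplus\CC\ell$ is involutive and contains $\Gamma(\ell)$, so $\cL_X(\scH\oplus\CC\ell)\subseteq\scH\oplus\CC\ell$ for every $X\in\Gamma(\ell)$; therefore $q_*\scH$ is independent of the point along each fibre of $q$ and pushes forward to an involutive $T^{1,0}$-distribution on $\mathcal{Z}$. Consequently the CR complex structure of $\scC$ descends to an integrable complex structure on the $4$-real-dimensional $\mathcal{Z}$, the minitwistor space of $\nabla^{[g]}$ in the sense of \cite{Hitchin-EW}.

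The step I expect to be the main obstacle is the structure-equation computation of $\exd\zeta^2$ modulo $\{\zeta^1,\zeta^2\}$: one must carry the $\xi_i=\sP_{ij}\omega^j$ terms through the curvature relations in \eqref{eq:Cartan-curvature-conformal-conn}, check that the Cotton--York contributions drop out, and isolate exactly the single $\tau^0\w\czeta^1$ coefficient. Coupled to it is the routine but slightly delicate point that vanishing of this scalar over the \emph{whole} bundle $\cT$ is genuinely equivalent to $\sP^{\circ}_{ij}\equiv 0$, and hence to Einstein--Weyl, rather than to a weaker partial-trace condition.
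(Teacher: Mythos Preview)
Your proof is correct and the overall architecture matches the paper's: the same structure-equation computation yields $\exd\zeta^1\equiv 0$ and $\exd\zeta^2\equiv -\tfrac12\bigl[(\sP_{11}-\sP_{22})+\ri(\sP_{12}+\sP_{21})\bigr]\tau^0\wedge\czeta^1$ modulo $\{\zeta^1,\zeta^2\}$, and the descent argument via Lemma~\ref{lem:path-geometry} is essentially the same as the paper's.

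The one genuine methodological difference is in the step ``vanishing of $c$ over all of $\cT$ $\Rightarrow$ $\sP^{\circ}_{ij}\equiv 0$''. The paper does this infinitesimally: it differentiates the scalar conditions $\sP_{11}-\sP_{22}=0$, $\sP_{12}+\sP_{21}=0$ along the fibers of $\cC_0\to M$ using the Bianchi identities, obtaining the additional relations $\sP_{(0a)}=0$ and $\sP_{11}=\sP_{22}=\ve\sP_{00}$ as differential consequences. You instead use the integrated version: you recognise $c$ as the trace-free part of $\sP^{\circ}|_{[p]^\perp}$ and argue pointwise over $M$ that a trace-free symmetric $2$-tensor whose restriction to every (time-like) hyperplane is pure trace must vanish. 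Both arguments are valid and are really two faces of the same fact---the paper's Bianchi identities \eqref{eq:bianchi1} are precisely the infinitesimal form of your observation that the condition must hold at every point of the fiber $\mathrm{SO}^o(p,q)/\mathrm{SO}(2)$. Your route is a bit more geometric and avoids writing out \eqref{eq:bianchi1}; the paper's route is purely computational and sidesteps the case analysis you sketch for the Lorentzian signature (which, while correct, could be streamlined by noting that the vanishing condition is polynomial in $[p]$ and hence extends from the open cone of time-like directions to all directions).
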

\begin{proof}
  By Lemma \ref{lem:path-geometry}, the space of (time-like) geodesics of $\nabla^{[g]},$ denoted by $\cS,$ coincides with the space of integral fields of $\what\mu_*\langle\partial_{\omega^0}\rangle\subset \cT.$ Thus, the set of 1-forms $(s^*\omega^1,s^*\omega^2,s^*\theta^1,s^*\theta^2)$, for any section $s\colon\cS\to \cC_0,$ gives an adapted coframe on $\cS.$ It is clear that if the complex Pfaffian system $\{\zeta^1,\zeta^2\}$ is integrable then the induced complex structure on $\cS$ with  holomorphic distribution $\Ker\{\zeta^1,\zeta^2\}\subset \CC\otimes T\cS$ is the descent of the CR holomorphic distribution given in the proof of Proposition \ref{prop:algebr-type-CR-twist} as $\Ker\{\omega^0,\zeta^1,\zeta^2\}.$

  It remains to prove the first part of the proposition. By our discussion in \ref{sec:proj-class-weyl}, given  a Weyl structure $s\colon\cC_0\to\cC$, it follows from the structure equations \eqref{eq:Cartan-curvature-conformal-conn} that the 1-forms $\zeta^1$ and $\zeta^2$ satisfy
  \begin{equation}\label{eq:EW-complex}
    \exd\zeta^1\equiv 0,\quad \exd\zeta^2\equiv -\half \left(\sP_{11}-\sP_{22}+\ri(\sP_{12}+\sP_{21})\right)\omega^0\w\czeta^1,
  \end{equation}
  modulo $\{\zeta^1,\zeta^2\}.$ %
  The  proposition follows if one shows that the vanishing conditions
  \begin{equation}\label{eq:EW-P}
        \sP_{11}-\sP_{22}=0,\quad  \sP_{12}+\sP_{21}=0,
      \end{equation}
      for a conformal connection implies the Einstein-Weyl condition. It is clear that if  $\nabla^{[g]}$ is Einstein-Weyl then \eqref{eq:EW-P} holds. Conversely, one needs to exploit the group action of $\mathrm{O}(p,q)$ on $\sP_{ij},$ which at the infinitesimal level are encoded in the following Bianchi identities:
      \begin{equation}\label{eq:bianchi1}
        \begin{aligned}
          \exd(\sP_{11}-\sP_{22})\equiv&  -\exd(\ri\sP_{21}+\ri\sP_{12})\equiv \half(\sP_{01}+\sP_{10}-\ri(\sP_{02}+\sP_{20}))\czeta^2,\\
          \exd(\sP_{01}+\sP_{10})\equiv& \half(\sP_{11}-\ve\sP_{00}-\ri\half (\sP_{12}+\sP_{21}))\czeta^2,\\
          \exd(\sP_{02}+\sP_{20})\equiv &  \half(\sP_{22}-\ve\sP_{00}+\ri(\sP_{12}+\sP_{21}))\czeta^2,
        \end{aligned}
      \end{equation}
      mod $\{\omega^0,\zeta^1,\zeta^2,\czeta^1,\theta^3,\theta^0\}.$ Solving  \eqref{eq:EW-P} and  its differential consequences  \eqref{eq:bianchi1}, one arrives at $\sP_{(ij)}=\sP_{00}\ve_{ij}.$ Since the symmetric parts of  $\sP_{ij}$ and the Ricci  tensor  $R_{ij}$ are related, as given in Remark \ref{rmk:closed-weyl-structure}, it follows that  $\nabla^{[g]}$ is Einstein-Weyl. 
\end{proof}

\begin{remark}  
The existence of a  complex structure on the minitwistor space of Einstein-Weyl 3-manifolds was  observed in \cite{Hitchin-EW}. In relation to  integrable $\mathrm{GL}(2,\RR)$-structures defined by scalar ODEs of odd order, a similar CR characterization via their twistor bundle is expected to hold. The twistor bundle of scalar ODEs of higher order  has been studied in  \cite{Krynski-GL2} when the order is even. 
\end{remark}

\subsection*{Acknowledgments}
I would like to thank  Daniel J. F.  Fox and David Sykes  for helpful conversations. I also thank the anonymous referee for many helpful and clarifying comments and corrections. I acknowledge partial support by the grant  PID2020-116126GB-I00 provided via the Spanish Ministerio de Ciencia e Innovaci\'on MCIN/ AEI /10.13039/50110001103 as well as partial funding from the Tromsø Research Foundation (project “Pure Mathematics in Norway”) and the UiT Aurora project MASCOT. Some parts of this article was done when I was at Masaryk University for which I would like to acknowledge the grant  GA22-00091S.  The  EDS calculations  are done using Jeanne Clelland's \texttt{Cartan} package in Maple. %

\bibliographystyle{alpha}      
\bibliography{Refs.bib}
\end{document}